\documentclass[12pt]{amsart}
\usepackage{fullpage}
\usepackage{amsmath,amssymb,amsthm,mathtools}
\usepackage{booktabs,array,tabularx,longtable,enumitem}
\usepackage[hidelinks]{hyperref}

\hypersetup{
 pdftitle={Density of repelling periodic points for cubic rational maps},
 pdfauthor={Nha Xuan Truong},
 pdfsubject={Cubic rational maps in non-Archimedean dynamics},
 pdfkeywords={non-Archimedean dynamics, cubic rational maps, classical Julia set, type I repelling periodic points, Berkovich projective line}
}

\numberwithin{equation}{section}

\newtheorem{theorem}{Theorem}[section]
\newtheorem{lemma}[theorem]{Lemma}
\newtheorem{proposition}[theorem]{Proposition}
\theoremstyle{definition}

\newcommand{\Ctwo}{\mathbf C_2}
\newcommand{\Cthree}{\mathbf C_3}
\newcommand{\Cpadic}{\mathbf C_p}

\newcommand{\PP}{\mathbf P^1}

\newcommand{\JI}{J_{\mathrm I}}

\newcommand{\DD}{\mathbb D}

\newcommand{\Dbar}{\overline{\mathbb D}}

\newcommand{\OO}{\mathcal O}
\newcommand{\mm}{\mathfrak m}
\newcommand{\zG}{\zeta_{\mathrm G}}

\newcommand{\Rep}{\operatorname{Rep}_{\mathrm I}}

\title[Repelling cycles of cubic rational maps]
{Density of repelling periodic points for cubic rational maps}
\author{Nha Xuan Truong}
\address{Beijing International Center for Mathematical Research,
Peking University, Beijing, China}
\email{nxtruong@bicmr.pku.edu.cn}
\keywords{Non-Archimedean dynamics, cubic rational maps,
classical Julia set, type I repelling periodic points,
Berkovich projective line}

\begin{document}

\begin{abstract}
We prove the repelling-density conjecture for cubic rational
maps over $\mathbf C_p$: for every prime $p$, the classical
Julia set is the closure of the classical repelling periodic
points. By B\'ezivin's theorem, it suffices to prove that a
nonempty classical Julia set contains a classical repelling
periodic point. The case $p\geq5$ follows from known results
for tame maps, so we treat residue characteristics two and
three. We argue by contraposition, assuming that the map
has no classical repelling periodic points. Maps with
potential good reduction already have empty classical
Julia set. Otherwise, we conjugate the map to an explicit
normalized form and analyze its coefficients. In each case, we obtain coefficient restrictions and
establish a uniform bound on the spherical derivatives
of all iterates. This bound implies equicontinuity on the
classical projective line and hence emptiness of the
classical Julia set.
\end{abstract}
\maketitle

\section{Introduction}
\label{sec:main}

Let $p$ be a prime, and let $\Cpadic$ be the completion
of an algebraic closure of $\mathbf Q_p$. For a rational
map $f\in\Cpadic(z)$ of degree at least two, the
\emph{classical Fatou set} consists of the points of
$\PP(\Cpadic)$ having a neighborhood on which the
iterates of $f$ are equicontinuous for the chordal
metric. Its complement is the \emph{classical Julia
set}, denoted by $\JI(f)$. Equivalently, $\JI(f)$ is
the intersection of the Berkovich Julia set with
$\PP(\Cpadic)$.

A classical periodic point of least period $n$ is
\emph{repelling} if the multiplier of $f^n$ at that
point has absolute value greater than one.
We write $\Rep(f)$ for the set of classical
repelling periodic points of all periods.
Throughout the paper, ``classical'' and ``type~I''
have the same meaning.

We prove the repelling-density conjecture for cubic rational
maps over $\Cpadic$ for every prime $p$.
The conjecture, discussed in the survey of Benedetto, Ingram, Jones, Manes, Silverman, and Tucker
\cite{survey19}, asserts that the classical Julia set is
the closure of the classical repelling periodic points.

Repelling periodic points are known to be dense in the
Berkovich Julia set. The corresponding question for
classical points is more delicate: this density does
not ensure that classical Julia points can be
approximated by classical repelling periodic points
\cite{survey19}.
\subsection{The main theorem and background}

\begin{theorem}
\label{thm:main-global}
Let $p$ be a prime, and let $f\in\Cpadic(z)$
have degree three. Then
\[
 \JI(f)=\overline{\Rep(f)},
\]
where the closure is taken in $\PP(\Cpadic)$
for the chordal topology. In particular,
\[
 \JI(f)\ne\varnothing
 \quad\Longleftrightarrow\quad
 \Rep(f)\ne\varnothing.
\]
\end{theorem}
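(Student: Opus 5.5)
The plan follows the strategy announced in the abstract. The first step is a reduction. By B\'ezivin's theorem, if $\JI(f)$ contains at least one classical repelling periodic point, then $\JI(f)=\overline{\Rep(f)}$; the inclusion $\overline{\Rep(f)}\subseteq\JI(f)$ always holds, since repelling points lie in the Julia set and the Julia set is closed. So it suffices to show that $\JI(f)\neq\varnothing$ implies $\Rep(f)\neq\varnothing$. The second assertion of Theorem~\ref{thm:main-global} then follows at once. For $p\ge5$ a cubic map is tame, because $p$ does not divide any local degree, which is at most $3$. The known results for tame maps then give the density statement directly. This leaves $p\in\{2,3\}$, which I treat by contraposition. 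I assume $\Rep(f)=\varnothing$ and aim to prove that $\JI(f)=\varnothing$, by showing that $\{f^n\}$ is equicontinuous on all of $\PP(\Cpadic)$.

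\textbf{Potential good reduction.} If some Möbius conjugate of $f$ has good reduction, then that conjugate acts as an isometry-contraction for the chordal metric. Its classical Julia set is therefore empty, and we are done. Otherwise, the Berkovich Julia set is not the single point $\zG$ after any conjugation. By Rivera-Letelier's classification, the absence of type~I repelling cycles means the Berkovich Julia set consists of non-classical points. I would use this to choose a normalization. I conjugate so that a suitable fixed point, or a repelling type~II periodic point, sits at $\zG$, and so that the critical and fixed points of $f$ occupy prescribed positions, for instance a fixed point at $\infty$ and one at $0$. This puts $f$ into an explicit form $f(z)=\frac{a_3z^3+a_2z^2+a_1z}{b_2z^2+b_1z+1}$ or a similar normalized shape, with a small number of free coefficients. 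Reducing modulo $\mm$ at $\zG$ then splits the analysis into cases according to the reduction degree (1 or 2) and the inseparability pattern. Inseparability can occur only because $p=2$ or $3$, for example a reduction of the form $z^2$ in characteristic $2$, or $z^3$ in characteristic $3$.

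\textbf{Case analysis.} In each case I use the hypothesis $\Rep(f)=\varnothing$ to constrain the coefficients. I would write down the fixed points and the period-two points, together with their multipliers, as algebraic functions of the coefficients. Newton-polygon arguments applied to $f^n(z)-z$ for small $n$ would then force each multiplier to satisfy $|\lambda|\le1$. The resulting inequalities on the absolute values $|a_i|,|b_j|$ should pin the map into a region where one can prove a uniform bound $f^{n\#}(z)\le C$ on the spherical derivatives of all iterates, for all $z\in\PP(\Cpadic)$ and all $n$. I would prove this by induction on $n$, via the chain rule $f^{(n+1)\#}(z)=f^\#(f^n z)\,f^{n\#}(z)$. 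This requires showing that orbits spend only boundedly many steps in the disks where $f^\#>1$. Equivalently, the expansion picked up near the Berkovich Julia set is compensated by contraction afterwards, which I would track through the dynamics on the tree of disks. A uniform Lipschitz bound implies equicontinuity, hence $\JI(f)=\varnothing$.

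\textbf{Main obstacle.} The hardest step will be the wild cases for $p=2$ and $p=3$. There the reduction is inseparable, so at some type~II point the local degree is divisible by $p$. The Berkovich Julia set can then be a Cantor set, or more complicated, while still containing no classical repelling points. The derivative bound must then come from a delicate cancellation: the local expansion factor on a disk, of order $|3|^{-1}$ or $|2|^{-1}$, must be offset by a matching contraction. For these cases I would first try to obtain the bound by computing $f^\#$ explicitly on the annuli around the Julia set. I would then use the coefficient inequalities to show that the product of expansion factors along any orbit telescopes to a bounded quantity.
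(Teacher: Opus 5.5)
Your outer frame is the paper's: B\'ezivin's reduction to the existence implication, the tame case $p\ge5$, potential good reduction, contraposition, and a uniform bound on $(f^n)^\#$ giving equicontinuity (Lemma~\ref{tk:lem:fatou}). For $p=2,3$, however, the proposal does not contain the argument, and several of its concrete steps are wrong.

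First, the normalization. Your claim that ``the absence of type~I repelling cycles means the Berkovich Julia set consists of non-classical points'' is circular. $\JI(f)$ is exactly the set of classical points of the Berkovich Julia set, so this is the implication $\Rep(f)=\varnothing\Rightarrow\JI(f)=\varnothing$ you are trying to prove. What is available is Rivera-Letelier's theorem that $f$ has a repelling \emph{fixed} point in the Berkovich line. Under your hypothesis it is non-classical, hence of type~II, since types~III and IV have local degree one. After moving it to $\zG$, its local degree is exactly two, because three would mean good reduction. So reduction degree one, a reduction $z^3$ in characteristic three, and a periodic rather than fixed type~II point (which would force you to work with $f^n$) never arise. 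The missing structural step is Lemma~\ref{nm:lem:hole}: a simple hole of the reduction not fixed by the reduced map yields, via Hensel's lemma, a classical repelling fixed point. Applied to $f$ and $f^2$ (Proposition~\ref{nm:prop:total}), it makes the hole totally invariant. Hence $f$ is conjugate to the single family $H(z)=(Az^3+Bz^2+Cz+D)/\bigl(z(1+Ez+Fz^2)\bigr)$ with $|A|=1$, $|B|,|C|,|D|<1$, $D\ne0$, $|E|,|F|\le1$. Normalizing by fixed and critical points does not encode this constraint.

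Second, the coefficient restrictions. Multipliers of solutions of $f^n(z)=z$, an equation of degree $3^n+1$, are not usable as explicit algebraic functions. Moreover, the restrictions actually needed come from periodic points of period greater than two. The paper builds chains of finite surjective disk maps $U\to\cdots\to\DD(0,R)$ with $\sup_{z\in U}|z|<R$ and known derivative norms. Lemma~\ref{tk:lem:count} then gives a fixed point of $H^N$ in $U$ whose multiplier norm is the product of those norms. For $p=2$ this uses $N=3,4$, for example $|D|/|C|\ge1/4$ in Lemma~\ref{sp:lem:coefficientbounds} and $|C-\gamma|\le4c^4$ in Proposition~\ref{bd:prop:collision}. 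For $p=3$, the bounds $d\ge1/27$, $|3A\zeta+B|\le d^{5/3}$, $|P'(\zeta)|\le d^3$ of Lemma~\ref{p3:lem:coefficientbounds} are obtained only as limits over periods $k+2$ with $k\to\infty$.

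Third, your mechanism for the derivative bound fails: orbits need not spend boundedly many steps where $H^\#>1$. Take $d=|D|^{1/3}$ with $|B|\le d$ and $|C|\le d^2$. Then the annulus $d^2<|z|<d$ is invariant, with $|H(z)|=d^3/|z|$ and $H^\#(z)=d^3/|z|^2$. This exceeds one for $|z|<d^{3/2}$, so such orbits re-enter the expanding region every other step forever. What works is compensation, in one of two forms:
\begin{itemize}
\item a weight $\phi$ with $m\le\phi\le M$ and $H^\#\phi\le\phi\circ H$ (Theorem~\ref{bd:thm:boundedfactor}), such as $\phi(z)=\max\{|D|,\min\{|z|,1\}\}$ for $p=3$;
\item a bound of one on the derivative along each first return to $|z|=d$ (Lemma~\ref{lc:lem:uniformderivative}).
\end{itemize}
Both rest on restrictions of the kind above, which your plan has no means to produce.
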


The question goes back to Hsia's work on weak
N\'eron models and non-Archimedean dynamics
\cite{hsia96}.
Hsia subsequently proved that the classical
Julia set is contained in the closure of the
classical periodic points and formulated the
repelling-density conjecture
\cite{hsia00}. Approximation
by periodic points alone does not establish
repelling density, since a non-Archimedean
rational map may have infinitely many
nonrepelling periodic points.

B\'ezivin proved that the existence of one
classical repelling periodic point implies
that the classical Julia set is the closure
of the classical repelling periodic points.
He also proved that every classical Julia
point is an accumulation point of such
points \cite{bezivin}.
Since classical repelling periodic points
belong to the classical Julia set, the
conjecture over $\Cpadic$ is equivalent
to the existence implication
\[
 \JI(f)\ne\varnothing
 \quad\Longrightarrow\quad
 \Rep(f)\ne\varnothing.
\]
Our main goal is to prove this implication
for cubic rational maps.

Several cases were already known.
Doyle--Jacobs--Rumely proved the conjecture
for quadratic rational maps
\cite{djr16}.
For cubic polynomials, Briend--Hsia proved
an emptiness criterion in terms of
nonrepelling fixed points and obtained
the resulting density statement over
$\Cpadic$
\cite{briend-hsia12}.
For rational maps, one must also control
orbits passing near poles.

Favre--Rivera-Letelier proved the existence
implication when the residue characteristic
exceeds the degree
\cite{frl26}.
Thus the cubic case for $p\geq5$ is already
known. The arguments developed here address
the remaining residue characteristics,
$p=2$ and $p=3$.

\subsection{Outline of the proof}

We argue by contraposition: assuming that
$f$ has no classical repelling periodic
point, we prove that $\JI(f)=\varnothing$.
Maps with potential good reduction already
have empty classical Julia set
\cite{benedetto},
so we restrict to persistent bad reduction.

Lemma~\ref{nm:lem:fixed}, based on Rivera--Letelier's
fixed-point theorem \cite{rivera}, gives a fixed
type~II point of local degree two.
The absence of classical repelling periodic
points constrains the reduction at this
point and allows us to conjugate $f$ to
\[
 \begin{gathered}
 H(z)=\frac{Az^3+Bz^2+Cz+D}{z(1+Ez+Fz^2)},\\
 |A|=1,\qquad |B|,|C|,|D|<1,\qquad D\ne0,
 \qquad |E|,|F|\leq1.
 \end{gathered}
\]

We analyze this expression using two analytic tools.
First, we construct restrictions of iterates
$H^N:U\to\DD(0,R)$ that are finite and surjective.
When $\sup_{z\in U}|z|<R$, Lemma~\ref{tk:lem:count}
gives a fixed point $h$ of $H^N$ in $U$.
If $|(H^N)'(h)|>1$, then $h$ is a repelling periodic point
of $H$, contrary to our assumption.
The resulting coefficient restrictions give a bound on the
spherical derivatives of all iterates that is independent of
the starting point and the iterate. We obtain this bound either
by multiplying the derivative norms along orbit segments
between successive visits to a specified circle, or
by constructing a function $\phi$ and constants
$0<m\leq M<\infty$ satisfying
\[
 m\leq\phi(z)\leq M,\qquad
 H^\#(z)\phi(z)\leq\phi(H(z))
 \qquad\text{for all }z\in\PP(\Cpadic).
\]
In the latter case, multiplying the inequalities along an
orbit cancels the intermediate values of $\phi$.
Lemma~\ref{tk:lem:fatou} converts this bound into
equicontinuity and hence an empty classical Julia set. This approach
is related to the idea proposed by Thomas Silverman for polynomials
\cite{tsilverman20}.

The paper is organized as follows.
Section~\ref{sec:tame} records the known
case $p\geq5$. Section~\ref{sec:foundations}
establishes the conjugation and analytic
tools. Sections~\ref{sec:two-adic}
and~\ref{sec:three-adic} treat residue
characteristics two and three.
We conclude by combining these results with
B\'ezivin's theorem to prove
Theorem~\ref{thm:main-global}.

\subsection*{Notation and conventions}

We use the absolute value satisfying
$|p|=p^{-1}$ and write
\[
 v_p(x)=-\log_p|x|\quad(x\ne0),
 \qquad v_p(0)=\infty.
\]
We use $f^n$ for the $n$th iterate,
$\OO$ for the valuation ring of $\Cpadic$,
$\mm$ for its maximal ideal, and
$k=\OO/\mm$ for the residue field.
Bars denote coefficientwise reduction
modulo $\mm$. The reduced rational map,
obtained after cancelling common factors,
is denoted by $\widetilde f$.

For finite points, the chordal distance is
\[
 \Delta(z,w)=
 \frac{|z-w|}
 {\max\{1,|z|\}\max\{1,|w|\}},
\]
with
\[
 \Delta(z,\infty)=\frac1{\max\{1,|z|\}},
 \qquad
 \Delta(\infty,\infty)=0.
\]
All closures of sets of classical points
are taken in the chordal topology on
$\PP(\Cpadic)$.
We write
\[
 \DD(a,r)=\{z\in\Cpadic:|z-a|<r\},
 \qquad
 \Dbar(a,r)=\{z\in\Cpadic:|z-a|\leq r\}.
\]

Throughout the paper, roots of polynomial equations are counted
with multiplicity. In particular, two roots listed separately
need not be distinct.

For the finite analytic maps between disks
used below, the mapping degree is the
number of solutions of $f(z)=y$ in the source disk,
for any point $y$ in the target disk.
It is computed using the Weierstrass degree
and refers to the restriction to the disk,
not the global degree of the rational map.

An analytic map of mapping degree one between the
disks considered below is bijective and scales distances
by a constant factor equal to its derivative norm.
These facts follow from the power-series description
of disk images \cite{benedetto}.

\subsection*{Acknowledgments}

We are grateful to Robert Benedetto for bringing this
question to our attention, for telling us about Thomas
Silverman's approach, and for sharing his insights into
non-Archimedean dynamics. We also thank him for his helpful
correspondence and encouragement.
\section{The known tame cubic case}
\label{sec:tame}

We first treat $p\geq5$. Since the residue
characteristic exceeds the degree, every cubic
rational map over $\Cpadic$ is tame
\cite{frl26}.
The existence implication needed for
Theorem~\ref{thm:main-global} follows from
the following stronger result.

\begin{proposition}[Favre]
\label{prop:tame}
Let $p\geq5$, and let $f\in\Cpadic(z)$
have degree three. If $\JI(f)\ne\varnothing$,
then $f$ has a classical repelling periodic
point of period at most three.
\end{proposition}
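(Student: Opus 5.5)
This proposition is attributed to Favre and is stated as a consequence of \cite{frl26}. My plan is to reduce it to their tame-map results rather than reprove it from scratch. The argument would go in contrapositive form: assume $f$ has no classical repelling periodic point of period at most three, and show that $\JI(f)=\varnothing$.

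First, by Benedetto's criterion \cite{benedetto}, if $f$ has potential good reduction, then $\JI(f)=\varnothing$. So we may assume persistent bad reduction. By Rivera-Letelier's fixed-point theorem \cite{rivera}, there is then a repelling fixed type~II point $\zeta$. Its local degree $d_\zeta$ satisfies $2\le d_\zeta\le 3$. After conjugating by an element of $\mathrm{PGL}_2(\Cpadic)$, we may take $\zeta=\zG$. The reduction $\widetilde f$ then has degree $d_\zeta\in\{2,3\}$. Because $p\ge5>\deg f$, this reduction is separable and every local degree is prime to $p$; this is the tameness used in \cite{frl26}.

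The key step is to produce classical repelling points of low period from the directions at $\zG$. By Riemann--Hurwitz, $\widetilde f$ has $d_\zeta+1$ fixed points in $\PP(k)$, counted with multiplicity. Let $\vec v$ be a direction at $\zG$ fixed by $\widetilde f$, with multiplicity $m$ at the corresponding residue class $U$. Consider $U$ together with the preimages of $U$ under $f$ that meet $U$, that is, the bad directions. Counting solutions of $f(z)=z$ in $U$ with the Weierstrass degree, the residue class contains classical fixed points. In the tame case, the multiplier of such a fixed point is controlled by the reduction. If $\widetilde f'$ at the residue fixed point is not $1$, we get a classical fixed point whose multiplier has absolute value $\ge1$. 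A repelling one is forced once $U$ maps over all of $\PP^1$, i.e.\ when $U$ is a bad direction whose image covers $\zG$.

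If no fixed direction works, I would use the Julia set. Since $\JI(f)\ne\varnothing$, some classical point has an orbit that keeps returning to bad directions. I would then use the cycles of $\widetilde f$ of period $2$ or $3$, again counted by Riemann--Hurwitz over $k$, to find a residue class $U$ and an $n\le3$ with $f^n(U)\supseteq\PP(\Cpadic)\supset\overline U$. Lemma~\ref{tk:lem:count} then gives a fixed point of $f^n$ in $U$. Tameness keeps derivative norms along this orbit equal to the scaling ratios, so the multiplier is $>1$. The bound $n\le3$ comes from the fact that the dynamics on the tree can be encoded by at most $\deg f=3$ bad directions.

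The main obstacle is this combinatorial step: showing that a nonempty $\JI(f)$ forces a bad-direction cycle of length at most three whose first return is surjective and expanding. Here I would rely on Favre--Rivera-Letelier's structure theorem for tame maps, which says the Julia set of a tame map is controlled by finitely many repelling type~II cycles with explicit bounds. I would cite it rather than redo the analysis.
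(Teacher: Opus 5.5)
There is a genuine gap. Beyond existence, the whole content of the proposition is the bound $n\le3$ on the period, and your proposal never obtains it.

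Your first step matches the paper: potential good reduction forces $\JI(f)=\varnothing$ by \cite{benedetto}, so $f$ has persistent bad reduction. The paper then simply invokes Favre's cubic theorem \cite[Theorem~3.2 and the following paragraph]{favre}. For $p>3$, a cubic map with persistent bad reduction has a rigid, hence classical, repelling cycle of period at most three. Nothing about $\JI(f)$ is used beyond persistent bad reduction. The paper cites \cite{frl26} only for the existence of \emph{some} classical repelling periodic point. You do not indicate how a structure theorem on repelling type~II cycles would produce a classical cycle of length at most three. So the decisive step is deferred to a citation that does not evidently supply it.

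Your own sketch of that step also fails as written. First, a bad direction $U$ with $f(U)=\PP(\Cpadic)$ need not contain a repelling fixed point. Take $p\geq5$, $0<|D|<1$, and $H(z)=z^2+D/z$. Its unique bad direction at $\zG$ is $\DD(0,1)$, which maps onto $\PP(\Cpadic)$. But every finite fixed point satisfies $D/z^2=1-z$, so its multiplier $H'(z)=3z-1$ has norm one, and $\infty$ is superattracting. Lemma~\ref{nm:lem:hole} needs the reduction to move the hole, which fails here since the hole is totally invariant under $\widetilde H(z)=z^2$.

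Second, Lemma~\ref{tk:lem:count} needs a disk target $\DD(0,R)$ with $\sup_U|z|<R$, not $f^n(U)\supseteq\PP(\Cpadic)$. Surjectivity alone gives no lower bound on the multiplier. On disks of mapping degree greater than one, the derivative norm at a point need not equal the scaling ratio, even for tame maps.

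Third, counting bad directions does not control the period. The same $H$ has a single bad direction and no repelling fixed point. Its repelling cycle (multiplier norm $d^{-1/2}$, $d=|D|^{1/3}$) has exact period three and runs through the circles $|z|=d^{5/2}$, $d^{1/2}$, $d$. This follows from the construction in the proof of Proposition~\ref{p3:prop:generic}, which applies here because $|2|=|3|=1$.

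Minor points: $d_\zeta=3$ is already excluded by persistent bad reduction, and the count of $d_\zeta+1$ fixed points of $\widetilde f$ is not Riemann--Hurwitz.
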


\begin{proof}
A map with potential good reduction has
empty classical Julia set
\cite{benedetto}.
Thus $\JI(f)\ne\varnothing$ implies that
$f$ has persistent bad reduction.

Since $p>3$, Favre's cubic theorem and its
stated extension to tame cubics apply
\cite[Theorem~3.2 and the following paragraph]{favre}.
They give a rigid repelling cycle of period
at most three. Over $\Cpadic$, rigid points
are classical points, so this proves the
assertion.
\end{proof}

For existence alone, one may instead apply
Favre--Rivera-Letelier's theorem
\cite{frl26}, which gives a
classical repelling periodic point whenever
the classical Julia set of a tame map is
nonempty. This conclusion is sufficient
for our main theorem: B\'ezivin's theorem
\cite{bezivin} then gives
\[
 \JI(f)=\overline{\Rep(f)}
\]
and shows that every classical Julia point
is an accumulation point of classical
repelling periodic points. If $\JI(f)$ is
empty, the same equality holds because
every classical repelling periodic point
belongs to $\JI(f)$.

It remains to treat $p=2$ and $p=3$, which we do in
Sections~\ref{sec:two-adic} and~\ref{sec:three-adic}.
\section{Normalization, the derivative criterion, and coefficient restrictions}
\label{sec:foundations}

We argue by contraposition: if a cubic rational map
$f$ has no classical repelling periodic point, we
prove that $\JI(f)=\varnothing$.

The argument has three parts. First, if $f$ does not have
potential good reduction, we conjugate it to an
explicit map $H$. Second, we give
a criterion for the classical Julia set to be empty: a uniform
bound on the spherical derivatives of all iterates implies
equicontinuity. We obtain this bound either directly or through
a function bounded above and bounded away from zero satisfying
a one-step derivative inequality.
We also record the lemmas on return maps between disks used to obtain
coefficient restrictions. Third, we establish the
initial restrictions on the normalized map before
separating the two-adic and three-adic arguments.

\subsection{Replacing the map by a normalized conjugate}
\label{nm:sec:normalization}

For $\phi\in\operatorname{PGL}_2(\Cpadic)$, write
\[
 f^\phi=\phi^{-1}\circ f\circ\phi.
\]
We say that $f$ has \emph{potential good reduction}
if some conjugate $f^\phi$ has good reduction.
Otherwise, $f$ has \emph{persistent bad reduction}.
Conjugation preserves periods and multipliers
of classical periodic points, as well as emptiness
of the classical Julia set.

A map with potential good reduction has empty
classical Julia set
\cite{benedetto}.
Thus only persistent bad reduction requires further
consideration. We first find a fixed type~II point
and study the reduction there.

\begin{lemma}
\label{nm:lem:fixed}
Let $f\in\Cpadic(z)$ have degree three and persistent
bad reduction. If $f$ has no classical repelling
periodic point, then it has a fixed type~II point
of local degree two.
\end{lemma}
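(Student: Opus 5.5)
We invoke Rivera-Letelier's fixed-point theorem for rational maps acting on the Berkovich projective line. It says that a rational map of degree at least two has either a repelling fixed point of type~I, or a fixed point in $\mathbf H$ (types II--IV) that is repelling in the Berkovich sense; the latter means a type~II point $\zeta$ with $\deg_\zeta f\ge2$. More precisely, we use the version stating that $f$ has a fixed point in $\mathbf H$ or a fixed type~I point that is repelling, and that any fixed point in $\mathbf H$ which is not of type~II with local degree one is repelling. Since $f$ has no classical repelling periodic points, in particular no repelling classical fixed points, the alternative produces a fixed point $\zeta\in\mathbf H$ that is repelling, hence of type~II with local degree $d_\zeta=\deg_\zeta f\ge2$. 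We then change coordinates so that $\zeta=\zG$. The reduction $\widetilde f$ at $\zG$ has degree $d_\zeta$, and $d_\zeta\in\{2,3\}$.

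It remains to exclude $d_\zeta=3$. If the local degree at the fixed point $\zG$ equals the global degree three, then $\widetilde f$ has degree three. This means $f$ has good reduction in these coordinates, contradicting persistent bad reduction. Hence $d_\zeta=2$.

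The main obstacle is matching the precise formulation of Rivera-Letelier's theorem to what we need. We must check that a fixed type~III or type~IV point cannot occur as the only output when there is no repelling classical fixed point. Such points are indifferent: their local degree is one, and the theorem as stated in Rivera-Letelier (and in Benedetto's book) guarantees a \emph{repelling} fixed point, either classical or of type~II with degree at least two. I would cite that form directly. If only the weaker version, ``a fixed point in $\mathbf H$ or a repelling classical fixed point,'' were available, I would fall back on the statement that a map with no repelling fixed point in $\PP(\Cpadic)$ has a unique repelling fixed point in the Berkovich line, necessarily of type~II.
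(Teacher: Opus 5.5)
Your argument is the same as the paper's. You cite Rivera-Letelier's Theorem~B for a repelling fixed point on the Berkovich line, exclude the classical case by hypothesis, conclude the point is type~II of local degree two or three (types~III and~IV having local degree one), move it to $\zG$, and exclude degree three because that would be good reduction. Drop your ``more precisely'' sentence, which wrongly calls type~III and~IV fixed points repelling and would not by itself give a repelling point in $\mathbf H$. The repelling form of the theorem you settle on in your last paragraph is exactly what is needed, so the uniqueness fallback is unnecessary.
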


\begin{proof}
By Rivera--Letelier's fixed-point theorem
\cite[Theorem~B]{rivera}, $f$ has a repelling fixed
point $\zeta$ on the Berkovich projective line.
The hypothesis excludes classical repelling fixed
points, so $\zeta$ is nonclassical.

A nonclassical repelling fixed point satisfies
$\deg_\zeta(f)\geq2$
\cite{benedetto}.
Periodic points of types~III and IV have local
degree one \cite{benedetto}.
Hence $\zeta$ is of type~II, with local degree
either two or three.

Conjugate $\zeta$ to the Gauss point. Its local
degree is the degree of the reduced map
\cite{benedetto}.
If this degree were three, the conjugate would
have good reduction, contradicting persistent
bad reduction. Therefore the local degree is two.
\end{proof}

After conjugation, assume that the point supplied
by Lemma~\ref{nm:lem:fixed} is $\zG$, and continue
to denote the map by $f$. Write
\[
 f([X:Y])=[F(X,Y):G(X,Y)],
\]
where $F,G\in\OO[X,Y]$ are coprime homogeneous
cubic forms whose largest coefficient norm is one.
Let $\bar F,\bar G\in k[X,Y]$ be their reductions
modulo $\mm$, where $k=\OO/\mm$.

Cancelling their common factor gives a map of
degree two. Their greatest common divisor therefore
has degree one, so
\begin{equation}
\label{nm:eq:reduction}
 \bar F=LU,\qquad \bar G=LV,
\end{equation}
where $L$ is linear and $U,V$ are coprime homogeneous
quadratic forms over $k$. The reduced map is
$\widetilde f=[U:V]$.

The zeros of the common factor are called the
\emph{holes} of the reduction, with multiplicities
given by their orders of vanishing in that factor.
Thus the zero $h\in\PP(k)$ of $L$ is the unique
simple hole. We show that it is totally invariant
under $\widetilde f$.

For $a\in k$, choose a lift $\alpha\in\OO$ and put
\[
 D_a=\DD(\alpha,1).
\]
This disk is independent of the chosen lift.
Also put
\[
 D_\infty=\{z\in\Cpadic:|z|>1\}\cup\{\infty\}.
\]
These residue classes are the classical points
in the directions at the Gauss point indexed
by $\PP(k)$.

\begin{lemma}
\label{nm:lem:hole}
Let $g\in\Cpadic(z)$ fix the Gauss point, and let
$\widetilde g\in k(z)$ be its nonconstant reduced map.
Suppose that the reduction has a simple hole
at $a\in\PP(k)$ and that $\widetilde g(a)\ne a$.
Then $D_a$ contains a classical repelling fixed
point of $g$.
\end{lemma}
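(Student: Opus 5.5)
We reduce to a normal position by an integral change of coordinates and then locate the fixed point and compute its multiplier directly.

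\textbf{Normalization.} Elements of $\operatorname{PGL}_2(\OO)$ fix the Gauss point. They also permute the residue classes $D_c$ according to their reductions in $\operatorname{PGL}_2(k)$. Conjugation by such an element replaces $\widetilde g$ by the conjugate reduced map. It transports holes together with their multiplicities, and it preserves fixed points and multipliers. Since $\operatorname{PGL}_2(k)$ acts $3$-transitively on $\PP(k)$, we may assume $a=0$ and $\widetilde g(0)=\infty$, the latter using $\widetilde g(a)\neq a$. Then $D_a=\DD(0,1)$. In affine form write $g=F/G$ with $F,G\in\OO[z]$ coprime and of maximal coefficient norm one. Dehomogenizing \eqref{nm:eq:reduction}-type data gives
\[
 \bar F=z^{e}\,U\cdot W,\qquad \bar G=z^{e}\,V\cdot W,
\]
where $e=1$ because the hole at $0$ is simple, and $W$ collects the other hole factors, with $W(0)\ne0$. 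Since $\widetilde g=U/V$ sends $0$ to $\infty$, we have $U(0)\ne0$ and $V(0)=0$. Hence $\bar F$ vanishes at $0$ to order exactly one, while $\bar G$ vanishes to order at least two.

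\textbf{The fixed point.} Put $P(z)=F(z)-zG(z)$. Then $\bar P=zW\,(U-zV)$, and $U(0)\ne0$, so $\bar P$ vanishes at $0$ to order exactly one. By the Weierstrass preparation theorem on $\DD(0,1)$ (equivalently, the Newton polygon), $P$ has exactly one zero $x$ with $|x|<1$. We can write $P(z)=(z-x)Q(z)$ with $|Q(z)|=1$ on $\DD(0,1)$. In particular $|P'(x)|=|Q(x)|=1$. We also have $G(x)\neq0$: otherwise $F(x)=P(x)+xG(x)=0$, contradicting coprimality. Thus $x$ is a finite classical fixed point of $g$ lying in $D_a$.

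\textbf{The multiplier.} Differentiating $F=P+zG$ gives $F'=P'+G+zG'$. Using $F(x)=xG(x)$, we get
\[
 g'(x)=\frac{F'(x)G(x)-F(x)G'(x)}{G(x)^2}=\frac{P'(x)+G(x)}{G(x)}=1+\frac{P'(x)}{G(x)}.
\]
Since $\bar G(0)=0$ and $|x|<1$, we have $|G(x)|<1$, so $|P'(x)/G(x)|>1$. The ultrametric inequality then yields $|g'(x)|=1/|G(x)|>1$. Hence $x$ is repelling.

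The only step needing care is the first one. We must check that the $\operatorname{PGL}_2(\OO)$ conjugation really transports the hole structure: that a simple hole stays simple and that $\widetilde g(a)$ moves correspondingly. This holds because the reduction of a conjugate is the conjugate of the reduction. With $b=\infty$ chosen, the remaining computation is the short Weierstrass-degree count above.
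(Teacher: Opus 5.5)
Your proposal is correct and is essentially the paper's argument: conjugate by $\operatorname{PGL}_2(\OO)$ to put the hole at $0$, take the unique root $x\in\mm$ of $F-zG$ (the paper uses Hensel's lemma, you use the Newton polygon/Weierstrass degree), use coprimality to get $G(x)\ne0$, and read off $|g'(x)|=1/|G(x)|>1$ from $g'(x)=1+(F-zG)'(x)/G(x)$. The only difference is cosmetic: you normalize $\widetilde g(0)=\infty$ instead of $\widetilde g(0)\in k^\times$, and either choice works because the argument only uses $\widetilde g(0)\ne0$ and $\bar G(0)=0$.
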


\begin{proof}
Conjugate by an element of
$\operatorname{PGL}_2(\OO)$ so that
\[
 a=0,\qquad \widetilde g(0)\in k^\times.
\]
Such a coordinate change can be chosen over $k$
and lifted to $\OO$. It preserves the Gauss point
and sends $D_a$ to $\DD(0,1)$.

Write $g=P/Q$, with coprime $P,Q\in\OO[z]$
whose largest coefficient norm is one.
Since zero is a simple hole and $\widetilde g(0)$
is finite and nonzero,
\[
 \bar P(z)=z\,u(z),\qquad
 \bar Q(z)=z\,v(z),
\]
where $u(0)v(0)\ne0$.

Consider $T(z)=P(z)-zQ(z)$. Its reduction satisfies
\[
 \bar T(z)=z\,u(z)-z^2v(z),\qquad
 \bar T(0)=0,\qquad \bar T'(0)=u(0)\ne0.
\]
Hensel's lemma gives a root $x\in\mm$ of $T$
with $|T'(x)|=1$. Coprimality implies $Q(x)\ne0$,
so $g(x)=x$. Also $0<|Q(x)|<1$.

Differentiating $g(z)-z=T(z)/Q(z)$ at $x$ gives
\[
 g'(x)-1=\frac{T'(x)}{Q(x)},\qquad
 |g'(x)-1|=\frac1{|Q(x)|}>1.
\]
The ultrametric inequality implies $|g'(x)|>1$.
Conjugating back gives the required repelling
fixed point in $D_a$.
\end{proof}

\begin{proposition}
\label{nm:prop:total}
In the setting of \eqref{nm:eq:reduction}, suppose
that $f$ has no classical repelling periodic point.
Then the zero $h$ of $L$ is totally invariant:
\[
 \widetilde f(h)=h,\qquad
 \widetilde f^{-1}(\{h\})=\{h\}.
\]
\end{proposition}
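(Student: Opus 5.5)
We prove the two assertions separately. Both use Lemma~\ref{nm:lem:hole}, applied to $f$ itself to get $\widetilde f(h)=h$, and applied to a suitable iterate to get the backward statement.

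\emph{Step 1: $\widetilde f(h)=h$.} The map $f$ fixes $\zG$, its reduction $\widetilde f=[U:V]$ is nonconstant of degree two, and $h$ is a simple hole by \eqref{nm:eq:reduction}. If $\widetilde f(h)\ne h$, then Lemma~\ref{nm:lem:hole} gives a classical repelling fixed point of $f$ in $D_h$. This contradicts the hypothesis, so $\widetilde f(h)=h$.

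\emph{Step 2: no other preimage.} Suppose, for contradiction, that $\widetilde f(b)=h$ for some $b\ne h$ in $\PP(k)$. We plan to consider $g=f^2$, which fixes $\zG$ and has reduction $\widetilde g=\widetilde f^{\,2}$ of degree four. We must determine the holes of $f^2$. The reduction of the composition is computed from the lifted forms: the forms of $f^2$ are $F(F,G)$ and $G(F,G)$, and reducing gives
\[
\bar F(\bar F,\bar G)=L(LU,LV)\,U(LU,LV)=L(LU,LV)\,L^2\,U(U,V),
\]
and similarly for the second form with $V$ in place of $U$. Two cases arise.

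If $\bar F$ and $\bar G$ vanish simultaneously, the composite forms reduce to zero. In that case one must rescale, and the holes of $f^2$ are governed by the order of vanishing along the fibre. Instead of handling this directly, we use the standard description of holes of a composition (Benedetto): the hole divisor of $f^2$ is $\widetilde f^{\,*}(\text{holes of } f)+(\text{holes of } f)$, with suitable multiplicities. Here $\widetilde f^{\,*}(h)$ contains $b$ with multiplicity one when $b$ is a simple preimage. So we expect $b$ to be a simple hole of $f^2$, while $\widetilde g(b)=\widetilde f(h)=h\ne b$. Lemma~\ref{nm:lem:hole} then gives a classical repelling fixed point of $f^2$, hence a repelling periodic point of $f$ of period dividing two. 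Its multiplier under $f^2$ exceeds one, so it is repelling in the sense of the least-period definition, which is a contradiction.

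If instead $b$ is a critical point with $\widetilde f^{-1}(h)=\{b\}$ doubly, the degree is two, so $h$ has the fibre $\{b,b\}$. Then $\widetilde f(h)=h$ forces $b=h$, contradicting $b\ne h$. So $h$ has a preimage other than $b$ only when that preimage is $h$ itself, and the only remaining case is a simple preimage $b\ne h$ together with the simple preimage $h$, which is the situation treated above.

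\emph{Main obstacle.} The hard part is verifying that $b$ is a \emph{simple} hole of $f^2$ with $\widetilde g$ nonconstant; this is the multiplicity computation in the formula for holes of composites. We would carry it out concretely. Conjugate by $\operatorname{PGL}_2(\OO)$ so that $h=\infty$ and $b=0$. Write $f=P/Q$ with $\bar P=\ell u$ and $\bar Q=\ell v$, and compute the lowest-order terms near $z=0$. The numerator and denominator of $f^2$ should share exactly one factor of $z$ modulo $\mm$, after the normalization by the maximal coefficient. If this multiplicity turns out to be larger, we would instead apply a Hensel argument to $P_2-zQ_2$ directly, as in the proof of Lemma~\ref{nm:lem:hole}.
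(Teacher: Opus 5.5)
Your strategy is the paper's: Step~1 applies Lemma~\ref{nm:lem:hole} to $f$, and Step~2 applies it to $f^2$ at a second preimage $b$ of $h$. Your exclusion of a doubled fibre and your closing period argument are also fine.

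\textbf{The gap.} The one step with real content is never proved: that $b$ is a \emph{simple} hole of $f^2$ and that $\widetilde{f^2}=\widetilde f^{\,2}$ is nonconstant. You only say you ``expect'' it, and the reasoning around it is partly wrong.

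\begin{itemize}
\item The claim that the composite forms reduce to zero when $\bar F$ and $\bar G$ vanish simultaneously confuses vanishing at the point $h$ with being the zero form. Since $L$ is linear and $U,V$ are quadratic, your own formula simplifies to
\[
 \overline{F(F,G)}=L^3\,L(U,V)\,U(U,V),\qquad
 \overline{G(F,G)}=L^3\,L(U,V)\,V(U,V).
\]
These are nonzero forms because $k[X,Y]$ is a domain. So the composed lift already has largest coefficient norm one, and no rescaling or fibre-by-fibre analysis is needed.
\item The hole formula you attribute to Benedetto is only gestured at. As written it also has the wrong coefficient: the holes of the inner $f$ enter with multiplicity $\deg f=3$, which is where $L^3$ comes from.
\item The fallback of running Hensel on $P_2-zQ_2$ if the multiplicity were larger would not work. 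The proof of Lemma~\ref{nm:lem:hole} needs $\bar T'(0)=u(0)\ne0$, which is exactly the simplicity of the hole. Without it, the bound $|g'(x)-1|=|T'(x)|/|Q(x)|>1$ is lost.
\end{itemize}

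\textbf{The missing computation.} It is short and completes the argument.
\begin{itemize}
\item The forms $U(U,V)$ and $V(U,V)$ are coprime. A common projective zero $x$ would make $(U(x),V(x))$ a common zero in $k^2$ of the coprime forms $U,V$. That forces $U(x)=V(x)=0$, which is impossible.
\item Hence the full common factor of the reduced lift is exactly $L^3L(U,V)$. So $\widetilde{f^2}=\widetilde f^{\,2}$, which is nonconstant of degree four.
\item The zeros of $L(U,V)$ are the $\widetilde f$-preimages of $h$, counted with multiplicity. Since $b\ne h$ is one of two preimages under a degree-two map, $L(U,V)$ vanishes simply at $b$, while $L(b)\ne0$.
\item Thus $b$ is a simple hole of $f^2$ with $\widetilde f^{\,2}(b)=\widetilde f(h)=h\ne b$. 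Lemma~\ref{nm:lem:hole} then applies to $f^2$, and your period argument finishes the proof as in the paper.
\end{itemize}
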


\begin{proof}
Lemma~\ref{nm:lem:hole} first gives
$\widetilde f(h)=h$.
Suppose that $a\ne h$ also satisfies
$\widetilde f(a)=h$. Since the reduced map has
degree two, these two preimages each have
multiplicity one. In particular, $a$ is a simple
zero of $L(U,V)$.

Composing the homogeneous lift with itself gives
\begin{equation}
\label{nm:eq:iteratehole}
 \begin{aligned}
 \overline{F(F,G)}
   &=L^3L(U,V)\,U(U,V),\\
 \overline{G(F,G)}
   &=L^3L(U,V)\,V(U,V).
 \end{aligned}
\end{equation}
These forms are not both zero, so the composed
lift has largest coefficient norm one.
The forms $U(U,V)$ and $V(U,V)$ are coprime:
a common projective zero would map under
$\widetilde f$ to a common zero of $U$ and $V$.
Thus their full common factor in
\eqref{nm:eq:iteratehole} is $L^3L(U,V)$,
and the reduced map of $f^2$ is $(\widetilde f)^2$.

At $a$, the factor $L$ does not vanish, whereas
$L(U,V)$ vanishes simply. Hence $a$ is a simple
hole of $f^2$, and
\[
 (\widetilde f)^2(a)=\widetilde f(h)=h\ne a.
\]
Lemma~\ref{nm:lem:hole} gives a point $x$ satisfying
$f^2(x)=x$ and $|(f^2)'(x)|>1$.
Here derivatives are computed in local coordinates along the orbit.
If $f(x)=x$, then $(f^2)'(x)=f'(x)^2$, so $|f'(x)|>1$.
Otherwise, $x$ has exact period two and
$|(f^2)'(x)|=|f'(f(x))f'(x)|>1$.
In either case, $x$ is a classical repelling periodic
point of $f$, a contradiction.
This proves total invariance.
\end{proof}

\begin{theorem}
\label{nm:thm:normalization}
Let $f\in\Cpadic(z)$ have degree three and
persistent bad reduction. Suppose that $f$
has no classical repelling periodic point.
Then a conjugate of $f$ has the form
\begin{equation}
\label{nm:eq:onepole}
 H(z)=\frac{Az^3+Bz^2+Cz+D}{z(1+Ez+Fz^2)},
\end{equation}
where
\begin{equation}
\label{nm:eq:norms}
 |A|=1,\qquad |B|,|C|,|D|<1,\qquad D\ne0,
 \qquad |E|,|F|\leq1.
\end{equation}
The Gauss point is fixed by $H$ with local
degree two, and the unique simple hole of
its reduction is totally invariant under
$\widetilde H$.
\end{theorem}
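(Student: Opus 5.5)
The plan is to start from Lemma~\ref{nm:lem:fixed} and Proposition~\ref{nm:prop:total}, and then use only conjugations by $\operatorname{PGL}_2(\OO)$. These fix $\zG$, and they change the reduction only by the corresponding conjugation over $k$. By Lemma~\ref{nm:lem:fixed}, after a first conjugation $\zG$ is fixed with local degree two, so \eqref{nm:eq:reduction} holds. By Proposition~\ref{nm:prop:total}, the simple hole $h$ is totally invariant. Choose $\sigma\in\operatorname{PGL}_2(k)$ sending $h$ to $0$, lift it to $\OO$, and conjugate. Then $\zG$ is still fixed with local degree two, and the hole is at $[0:1]$. After multiplying $F$ and $G$ by a common unit and rescaling $L$, we may take $L=X$.

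Next I will read off the reduction from total invariance. The condition $\widetilde f^{-1}(\{0\})=\{0\}$ for the degree-two map $[U:V]$ means that $U$ vanishes only at $[0:1]$, and to order two. Hence $U=cX^2$ with $c\in k^\times$. Since $U$ and $V$ are coprime, $V(0,1)\ne0$, and we can normalize $V(0,1)=1$. Therefore
\[
 \bar F=cX^3,\qquad \bar G=X\,V(X,Y),\qquad V=Y^2+eXY+gX^2.
\]
Dehomogenizing with $z=X/Y$, we get $F(z,1)=Az^3+Bz^2+Cz+D$ with $|A|=1$ and $|B|,|C|,|D|<1$. We also get $G(z,1)=g_0+g_1z+g_2z^2+g_3z^3$ with $|g_0|<1$, $|g_1|=1$, and $|g_2|,|g_3|\le1$.

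The remaining point is that $G(z,1)$ may have a small nonzero constant term, so the pole is near $0$ but not at $0$. Since $\bar G(z,1)=z(1+ez+gz^2)$ has a simple root at $0$, Hensel's lemma gives a unique root $\beta\in\mm$ of $G(z,1)$. Conjugating by the translation $z\mapsto z+\beta$ preserves everything above: the translation lies in $\operatorname{PGL}_2(\OO)$ and reduces to the identity, so $\bar F$, $\bar G$, the hole, and all the coefficient-norm conditions are unchanged. After this step the denominator vanishes at $0$, so $G(z,1)=z(g_1+g_2z+g_3z^2)$. Dividing numerator and denominator by the unit $g_1$ gives the form \eqref{nm:eq:onepole} with the norms \eqref{nm:eq:norms}. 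Finally, $D\ne0$ holds because otherwise $z$ would divide both numerator and denominator, contradicting coprimality and the fact that the degree is three. The statements about the Gauss point and the hole then hold because we conjugated only by elements of $\operatorname{PGL}_2(\OO)$.

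The step I expect to need the most care is the bookkeeping in the translation: I must check that conjugating by $z\mapsto z+\beta$ really keeps $|B|,|C|,|D|<1$ and $|A|=1$. The cleanest way is to argue at the level of reductions rather than recomputing coefficients. Since $\beta\in\mm$, the reduced homogeneous lifts are literally unchanged, so the normalized forms $\bar F=cX^3$ and $\bar G=XV$ persist after the conjugation.
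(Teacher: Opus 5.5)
Your proposal is correct and follows essentially the paper's argument: Lemma~\ref{nm:lem:fixed}, then Proposition~\ref{nm:prop:total}, then a $\operatorname{PGL}_2(\OO)$ conjugation placing the hole, then Hensel's lemma for the nearby pole, and finally a translation by an element of $\mm$. The difference is cosmetic: the paper sends the hole to $\infty$ (so $\widetilde f$ is a quadratic polynomial), then inverts by $w\mapsto 1/w$ and tracks explicit coefficient formulas, whereas you send the hole to $0$ directly and verify the norm conditions through the reductions $\bar F=cX^3$ and $\bar G=XV$.
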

\begin{proof}
By Lemma~\ref{nm:lem:fixed}, $f$ has a fixed type~II
point of local degree two. Conjugate this point to $\zG$.
Proposition~\ref{nm:prop:total} shows that the unique
simple hole of the reduction is totally invariant.

Conjugate further by an element of
$\operatorname{PGL}_2(\OO)$ whose reduction sends the hole
to infinity. Total invariance means that infinity is the
only preimage of infinity under the reduced map.
Thus the reduced map is a quadratic polynomial.

Multiply both homogeneous forms by the same scalar
to normalize the coefficient of $Y^3$ in the denominator
to one. Their reductions are
\[
 \bar F=Y(u_2X^2+u_1XY+u_0Y^2),\qquad
 \bar G=Y^3,
\]
where $u_2\ne0$. Write the homogeneous forms as
\[
 F=a_3X^3+a_2X^2Y+a_1XY^2+a_0Y^3,\qquad
 G=b_3X^3+b_2X^2Y+b_1XY^2+Y^3.
\]
Comparing coefficients in their reductions gives
\[
 \bar a_3=0,\qquad
 \bar a_2=u_2,\qquad
 \bar a_1=u_1,\qquad
 \bar a_0=u_0,\qquad
 \bar b_1=\bar b_2=\bar b_3=0.
\]
Setting $Y=1$, we obtain
\[
 f_0(z)=
 \frac{a_3z^3+a_2z^2+a_1z+a_0}
      {1+b_1z+b_2z^2+b_3z^3},
\]
with
\[
 |a_2|=1,\qquad |a_0|,|a_1|\leq1,\qquad
 |a_3|,|b_1|,|b_2|,|b_3|<1.
\]

Conjugating by $\iota(w)=1/w$ gives
$g(w)=n(w)/d(w)$, where
\[
 n(w)=w^3+b_1w^2+b_2w+b_3,\qquad
 d(w)=a_0w^3+a_1w^2+a_2w+a_3.
\]
The reduction of $d$ has a simple zero at zero.
By Hensel's lemma, $d$ has a unique zero $\eta\in\mm$.
Put
\[
 \alpha=d'(\eta)
       =a_2+2a_1\eta+3a_0\eta^2.
\]
Then $|\alpha|=1$. Since $n$ and $d$ are coprime,
$n(\eta)\ne0$, so $\eta$ is a simple pole of $g$.

Conjugating by $\tau(z)=z+\eta$ moves this pole
to zero and gives
\[
 H(z)=\frac{n(z+\eta)-\eta d(z+\eta)}{d(z+\eta)}.
\]
Since $d(\eta)=0$,
\[
 d(z+\eta)
 =z\bigl(\alpha+(3a_0\eta+a_1)z+a_0z^2\bigr).
\]
Dividing the numerator and denominator by $\alpha$
gives \eqref{nm:eq:onepole}, with
\begin{align*}
 A&=\frac{1-\eta a_0}{\alpha},&
 B&=\frac{3\eta+b_1-\eta(3a_0\eta+a_1)}{\alpha},\\
 C&=\frac{3\eta^2+2b_1\eta+b_2-\eta\alpha}{\alpha},&
 D&=\frac{n(\eta)}{\alpha},\\
 E&=\frac{3a_0\eta+a_1}{\alpha},&
 F&=\frac{a_0}{\alpha}.
\end{align*}
These formulas give $|A|=1$, $|B|,|C|<1$,
and $|E|,|F|\leq1$. Also $0<|n(\eta)|<1$,
so \eqref{nm:eq:norms} holds.

All coordinate changes after moving the type~II
point to $\zG$ belong to $\operatorname{PGL}_2(\OO)$.
They preserve the Gauss point, its local degree,
and total invariance of the hole.
\end{proof}

\subsection{Proving that the classical Julia set is empty}
\label{tk:sec:tools}

Our goal is to bound $(f^n)^\#(z)$ independently of $z$ and $n$.
Lemma~\ref{tk:lem:fatou} shows that such a bound implies
equicontinuity and hence that the classical Julia set is empty.
In some cases, we estimate derivative products directly.
In others, it is simpler to construct a function $\phi$ and
constants $0<m\leq M<\infty$ such that
\[
 m\leq\phi(z)\leq M,\qquad
 f^\#(z)\phi(z)\leq\phi(f(z))
 \qquad\text{for all }z\in\PP(\Cpadic).
\]
Multiplying these inequalities along an orbit
makes the intermediate factors cancel.
This gives the required uniform bound on the spherical
derivatives of all iterates.

Before proving this criterion, we record two
lemmas on return maps between disks. They will be used to obtain
coefficient restrictions: a fixed point $x$ of an iterate
$T=f^N$ with $|T'(x)|>1$ would be a classical repelling
periodic point of $f$.

Indeed, a fixed point $x$ of $T=f^N$ is periodic
under $f$. If its exact period is $r$, then
$r\mid N$ and
\[
 |(f^N)'(x)|=|(f^r)'(x)|^{N/r}.
\]
Thus $|T'(x)|>1$ shows that the periodic point is repelling,
without requiring its exact period.

\begin{lemma}
\label{tk:lem:count}
Let $U$ be an open disk, and let
$T:U\to\DD(0,R)$ be a finite surjective
analytic map. Suppose that
\[
 \sup_{z\in U}|z|<R.
\]
Then $T$ has a fixed point in $U$.
\end{lemma}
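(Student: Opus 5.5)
The plan is to compare the fixed-point equation $T(z)=z$ with the surjectivity equation $T(z)=y$, using the Weierstrass degree on the disk $U$. Write $U=\DD(a,r)$ and expand $T(z)=\sum_{j\ge0}c_j(z-a)^j$. This series converges on $U$, and since $T$ maps $U$ into $\DD(0,R)$ we have $|c_j|r^j\le R$ for all $j$. Because $T:U\to\DD(0,R)$ is finite and surjective, it has a mapping degree $d\ge1$. By the standard description of disk images \cite{benedetto}, $d$ is the Weierstrass degree of $T-y$ on $U$ for every $y\in\DD(0,R)$, and the image of $U$ being exactly $\DD(0,R)$ means that $\sup_j |c_j|r^j=R$. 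Moreover, $d$ is the largest index at which this supremum is attained, and $|c_j|r^j<R$ for $j>d$, with appropriate care about the radius not being attained, since the disk is open.

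Now set $S(z)=T(z)-z$. Write $z=a+(z-a)$, so that $S(z)=(c_0-a)+(c_1-1)(z-a)+\sum_{j\ge2}c_j(z-a)^j$. The key point is that the perturbation $z\mapsto z$ is small on $U$: $\sup_{z\in U}|z|=\rho<R$. I will show that, for radii $s<r$ close to $r$, the Weierstrass (dominant-term) degree of $S$ on $\Dbar(a,s)$ equals that of $T-y$ for a suitable $y$, namely $d\ge1$. Concretely, the Newton polygon of $S$ differs from that of $T$ only in the coefficients of $(z-a)^0$ and $(z-a)^1$, which change by $a$ and $1$. We have $|a|\le\rho$ and $r\le\rho$ (for $r$ at most $\rho$: if $U$ contains points of norm up to $\rho$, then $r\le\rho$ when $0\in U$ or otherwise $|a|=\rho\ge r$). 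In the sup norm over $\Dbar(a,s)$, the added term $-z$ has size at most $\rho<R$, while $\|T-c_0\|_s$ tends to $R$ as $s\to r$ if $d\ge1$. So for $s$ close to $r$, the dominant index of $S$ at radius $s$ is some index $\ge1$, independent of the constant term, because the constant term of $S$ has size $|c_0-a|\le\max(|c_0|,\rho)$ and $|c_0|<R$. Indeed $c_0=T(a)\in\DD(0,R)$.

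It follows that the Weierstrass degree of $S$ on $\Dbar(a,s)$ is at least $1$. By Weierstrass preparation, $S$ then has a zero in $\Dbar(a,s)\subset U$, which is a fixed point of $T$. Equivalently, I could phrase the argument as follows: the map $z\mapsto T(z)-z$ on $U$ has image containing a disk $\DD(0,R')$ with $R'>\rho$, hence contains $0$.

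I expect the main obstacle to be the bookkeeping with open disks. One has to choose $s<r$ so that $\|T-c_0\|_s>\max(\rho,|c_0|)$, which needs the sup of $|c_j|s^j$ over $j\ge1$ to approach $R$; this comes from surjectivity onto the open disk $\DD(0,R)$. One also has to check that subtracting $z$, whose coefficients in the variable $z-a$ are $a$ and $1$, each of size at most $\max(|a|,s)\le\rho$, cannot cancel that dominant term. The rest is the standard fact that an analytic function on a closed disk whose dominant term has positive index has a zero there.
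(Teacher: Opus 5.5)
Your argument is correct and uses the same mechanism as the paper's proof: on a closed subdisk of $U$ where $\|T\|$ exceeds $\sup_U|z|$, subtracting $z$ cannot destroy a positive Weierstrass degree, so $T(z)-z$ has a zero there. The paper takes the subdisk $\Dbar(u,|w-u|)$ with $T(u)=0$ and $|T(w)|>\sup_U|z|$, which makes positivity of the degree automatic and avoids both your limit $s\to r$ and the coefficient description of open-disk images. Separately, one side remark you never use is wrong: on an open disk the mapping degree is the \emph{smallest} index $j\geq1$ with $|c_j|r^j=R$, not the largest. For instance, $c_1(z-a)+c_2(z-a)^2$ with $|c_1|r=|c_2|r^2=R$ has mapping degree one on $\DD(a,r)$.
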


\begin{proof}
Put $c=\sup_{z\in U}|z|$ and choose $y$ with
$c<|y|<R$. By surjectivity, choose $a,b\in U$
with $T(a)=0$ and $T(b)=y$. The closed disk $V=\Dbar(a,|b-a|)$
is contained in $U$. The supremum norm of $T$
on $V$ is at least $|y|$, whereas that of the
function $z$ is at most $c$.

Subtracting $z$ therefore leaves the Weierstrass
degree unchanged. This degree is positive because
$T(a)=0$ and $T$ is nonconstant. The analytic
zero-counting theorem gives a zero of $T(z)-z$
in $V$; see
\cite{benedetto}.
\end{proof}

We sometimes replace an intermediate disk by a smaller disk
when constructing a composition of disk maps.
The next lemma describes the disks in its inverse image.
Mapping degree refers to the restriction to each disk,
as defined in the notation section.

\begin{lemma}
\label{tk:lem:refine}
Let $U$ be an open disk, and let $f=P/Q$, where
$P,Q\in\Cpadic[z]$ are coprime. Suppose that
$f:U\to\DD(b,r)$ is surjective of mapping degree
two and that
\[
 |Q(z)|=q>0
 \qquad\text{for all }z\in U.
\]
For $0<s\leq r$, the inverse image of $\DD(b,s)$
inside $U$ consists of either one open disk
mapping onto $\DD(b,s)$ with degree two, or
two disjoint open disks each mapping onto
$\DD(b,s)$ with degree one.
\end{lemma}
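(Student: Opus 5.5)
The plan is to reduce the statement to the structure theory of analytic maps on disks, the same power-series description already used for the mapping-degree facts in the notation section. Since $|Q|=q>0$ on $U$, the polynomial $Q$ has no zero in $U$, so $f$ is analytic on $U$. Write $U=\DD(c,\rho)$ and expand $f(z)-b=\sum_{n\ge0}a_n(z-c)^n$. Surjectivity onto $\DD(b,r)$ with mapping degree two means that for every radius $t<\rho$ close to $\rho$ the image of $\Dbar(c,t)$ is $\Dbar(b,\max_n|a_n|t^n)$, attained by Weierstrass degree two. Let $W=f^{-1}(\DD(b,s))\cap U$; we want to show that $W$ is one disk or two disjoint disks, with the stated degrees.

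First, the local picture. For any $x\in W$, consider the maximal open disk $D_x\ni x$ in $U$ with $f(D_x)\subseteq\DD(b,s)$. By the disk-image theorem, $f(D_x)$ is an open disk centered at $f(x)$, namely $\DD(b,s)$ itself, because $f(x)\in\DD(b,s)$ and the radius function $t\mapsto\max_n|a_n^{(x)}|t^n$ is continuous and increasing to $r\ge s$ as $t\to\rho$. Maximality then forces $f:D_x\to\DD(b,s)$ to be surjective, with a mapping degree $d_x\in\{1,2\}$. The degree is at least one since $f$ is nonconstant, and at most two because the number of preimages inside $D_x\subseteq U$ of any point is bounded by the total degree two on $U$. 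Two such disks are either equal or disjoint: if they met they would be nested, and maximality makes them equal. Hence $W$ is a disjoint union of open disks, each mapping onto $\DD(b,s)$.

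Next, we count. Fix $y\in\DD(b,s)$. It has exactly two preimages in $U$, counted with multiplicity, and all of them lie in $W$. Each component disk contributes exactly $d_x\ge1$ of them. Therefore the degrees of the components sum to two, giving either one component of degree two or two components of degree one. These components are disjoint by the previous step.

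The main obstacle is justifying that the component through $x$ maps onto all of $\DD(b,s)$ rather than onto a smaller disk. This requires the monotone, continuous growth of the image radius of $\Dbar(x,t)$ as $t\to\rho$, and the fact that, for the Weierstrass degree at $t$, the image radius tends to $r$. Here the hypothesis $|Q|=q$ constant, which implies that $Q$ is a unit on $U$, lets us write $f$ as a convergent power series with coefficient norms controlled uniformly. Recentering this series at $x\in U$ preserves the sup-norms on each $\Dbar(x,t)=\Dbar(c,t)$ once $t\ge|x-c|$, so the image radii agree with those computed at $c$ and increase to $r$.
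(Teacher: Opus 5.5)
Your argument is correct, but it takes a different route from the paper.

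\textbf{The paper's route.} It uses $|Q|\equiv q$ to rewrite $|f(z)-b|<s$ as the polynomial inequality $|P(z)-bQ(z)|<sq$. Factoring $P-bQ$ then shows this set is a finite disjoint union of open disks, each containing a root of $P-bQ$. A Rouch\'e-type comparison of Weierstrass degrees follows: the perturbation $(y-b)Q$ has norm $|y-b|q<sq$. This shows that each disk contains as many solutions of $f=y$ as roots of $P-bQ$, for every $y\in\DD(b,s)$.

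\textbf{Your route.} You work intrinsically with $f$ as an analytic function on $U$. For each $x\in W$ you take the maximal disk $D_x=\DD(x,t_0)$. You use that the image radius $t\mapsto\max_{n\geq1}|a^{(x)}_n|t^n$ is continuous and strictly increasing. You also use that it tends to $r$, which follows directly from $f(U)=\DD(b,r)$. Together these give $f(D_x)=\DD(b,s)$, and maximality gives the equal-or-disjoint property.

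\textbf{What each buys.} Your argument uses only analyticity of $f$ on $U$, so it applies verbatim to any finite surjective analytic map between disks. The paper's argument is more concrete: it obtains finiteness and the disk decomposition directly from a polynomial factorization. It also matches the Weierstrass-degree computations used elsewhere in the paper.

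\textbf{Two small points.} First, the image of $\Dbar(c,t)$ is the disk centered at $f(c)$ of radius $\max_{n\geq1}|a_n|t^n$; the constant term should be excluded from the radius. Second, you do not need to know in advance that the number of preimages in $D_x$ is independent of $y$. Your counting step already forces this: with two components each contains exactly one preimage of every $y\in\DD(b,s)$, and with one component it contains two.
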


\begin{proof}
For every $z\in U$,
\[
 |f(z)-b|<s
 \quad\Longleftrightarrow\quad
 |P(z)-bQ(z)|<sq.
\]
Factoring $P-bQ$ describes this set as a finite
disjoint union of open disks, each containing
a root of $P-bQ$ in $U$.

Let $V$ be one of these disks and let $y\in\DD(b,s)$.
Clearing denominators in the equation $f(z)=y$ gives
\[
 P(z)-yQ(z)
 =P(z)-bQ(z)-(y-b)Q(z)=0.
\]
The term $(y-b)Q(z)$ has norm $|y-b|q<sq$
throughout $V$.
Choose sufficiently large closed subdisks of $V$
containing all roots of $P-bQ$ in $V$, on which
the supremum norm of $P-bQ$ exceeds $|y-b|q$.
Subtracting $(y-b)Q$ leaves their Weierstrass
degrees unchanged. Taking increasingly large closed disks
contained in $V$ shows that $P-yQ$ and $P-bQ$ have the same
number of roots in $V$.

Thus each of these disks maps onto $\DD(b,s)$ with positive
mapping degree. The sum of their mapping degrees is two,
so the stated alternatives follow.
\end{proof}

When we construct a composition of disk maps by taking
inverse images, we choose one disk in the inverse image
at each step. Lemma~\ref{tk:lem:refine} ensures that replacing
an intermediate target disk by a smaller disk still gives
a surjective map on each of the resulting preimage disks.
The composition therefore maps its first disk onto the
prescribed final disk. We apply Lemma~\ref{tk:lem:count}
after verifying
\[
 \sup_{z\in U}|z|<R,
\]
where $U$ is the first disk and $\DD(0,R)$ is the final disk.
We then check that the resulting periodic point is repelling
by multiplying the derivative norms along its orbit.

We now explain how derivative bounds imply that
the classical Julia set is empty. At a finite point where $f$
is finite, its spherical derivative is
\[
 f^\#(z)
 =|f'(z)|
 \frac{\max\{1,|z|\}^2}
      {\max\{1,|f(z)|\}^2}.
\]
At poles and infinity, it is defined using local
coordinates that are chordal isometries.
It satisfies the chain rule
\[
 (f^{n+m})^\#(x)
 =(f^n)^\#(f^m(x))(f^m)^\#(x)
 \qquad\text{for all }x\in\PP(\Cpadic)
 \text{ and }n,m\geq0.
\]
At a periodic point, the spherical derivative of an iterate
fixing that point is the absolute value of the multiplier
of that iterate.

Uniform boundedness is preserved under fixed
conjugation. If $g=\phi^{-1}\circ f\circ\phi$, then
\[
 (g^n)^\#(x)
 =(f^n)^\#(\phi(x))
 \frac{\phi^\#(x)}{\phi^\#(g^n(x))}
 \qquad\text{for all }x\in\PP(\Cpadic)
 \text{ and }n\geq0.
\]
For fixed $\phi\in\operatorname{PGL}_2(\Cpadic)$,
the function $\phi^\#$ is bounded above and
bounded away from zero.
\begin{lemma}
\label{tk:lem:fatou}
Let $f\in\Cpadic(z)$ be nonconstant. Suppose that
there exists $M\geq1$ such that
\[
 (f^n)^\#(x)\leq M
 \qquad\text{for all }x\in\PP(\Cpadic)
 \text{ and }n\geq0.
\]
Then the iterates of $f$ are equicontinuous,
and $\JI(f)=\varnothing$.
\end{lemma}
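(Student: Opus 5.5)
The plan is to turn the uniform derivative bound into a uniform Lipschitz estimate for the chordal metric on small chordal balls. Once we have that, equicontinuity of $\{f^n\}$ at every point follows, so every classical point is Fatou and $\JI(f)=\varnothing$.

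The key step is a local statement. For any rational $g$ and any $x\in\PP(\Cpadic)$, I will show that there is a chordal radius $\delta_g(x)>0$ such that
\[
 \Delta(g(y),g(x))\leq g^\#(x)\,\Delta(y,x)\quad\text{whenever }\Delta(y,x)<\delta_g(x).
\]
A local statement of this kind is not enough by itself, because the radius depends on $g=f^n$. So the plan is to get a uniform radius from non-Archimedean rigidity. By a chordal isometry from $\operatorname{PGL}_2(\OO)$, I move $x$ to a point of $\Dbar(0,1)$; on $\Dbar(0,1)$ the chordal metric agrees with $|\cdot|$. Let $r\leq1$ with $r\leq 1/M$. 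Let $V$ be the chordal ball of radius $r$ about $x$, which is the disk $\DD(x,r)$. I then argue by induction on $n$ that $f^n(V)$ is contained in a chordal ball of radius $\leq M\cdot\Delta$-scale, more precisely that $f^n$ maps $V$ into a chordal ball of radius at most $Mr\leq1$ and is analytic there (omits a residue class).

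The inductive step goes as follows. Assume $f^{n}(V)$ lies in a disk $W$ of chordal radius $<1$. After a $\operatorname{PGL}_2(\OO)$ change of coordinates, $W\subset\DD(0,1)$ and the chordal metric is the usual one. The image $f(W)$ is either a disk or all of $\PP(\Cpadic)$ minus a finite set, and it cannot be the latter while the derivative stays bounded. For the analytic map $f^{n+1}$ on $V$, the image of a disk $\DD(x,s)$ with $s$ small is the disk $\DD(f^{n+1}(x),|(f^{n+1})'(x)|s)$ in local coordinates. More generally I use the power series description \cite{benedetto}. If $\phi$ is analytic on $\DD(x,s)$ with image in a disk (rather than a full residue complement), then the image radius is $\sup_k |a_k|s^k$. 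I then compare this with the derivative at every point of $\DD(x,s)$. Since $(f^{n+1})^\#\leq M$ at \emph{every} point of $V$, the image of each small disk $\DD(y,t)$ has radius $\leq Mt$. A connectedness-type argument in the non-Archimedean setting, which increases $t$ continuously up to $r$ and uses that image radii of disks vary monotonically and continuously in $t$, shows that $f^{n+1}(V)$ has chordal diameter $\leq Mr\leq 1$. Hence $f^{n+1}(V)$ does not cover $\PP(\Cpadic)$, which continues the induction.

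With this in hand, for $y\in V$ we get $\Delta(f^n(y),f^n(x))\leq M\Delta(y,x)$ for all $n$, which is uniform equicontinuity. The main obstacle is exactly the passage from the pointwise bound to a bound on image radii. The precise argument I would try first uses the radius function $t\mapsto \rho(t)$ of the image of $\DD(y,t)$. For small $t$ we have $\rho(t)=|\phi'(y)|t$ with $\phi=f^{n+1}$ in coordinates. This $\rho$ is piecewise of the form $c\,t^k$, and a break to a slope $k\geq2$ at $t_0$ forces a point $y'\in\DD(y,t_0)$ with $|\phi'(y')|=\rho(t_0)/t_0\cdot(\text{unit})$ larger than the previous slope allows. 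Indeed, the Newton polygon of $\phi(z)-\phi(y)$ shows $|\phi'(y')|=\rho(t_0)/t_0$ at points of radius $t_0$ in generic directions, so $\rho(t)/t\leq M$ persists as long as the image stays in a chordal ball of radius $<1$. That last condition is ensured because we stop at $\rho\leq Mr\leq1$.
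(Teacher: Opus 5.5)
\paragraph{The gap.} The failing step is the passage from the pointwise bound $(f^{n+1})^\#\leq M$ on $V$ to ``the image of $\DD(y,t)$ has radius at most $Mt$.'' In residue characteristic $p$, a derivative bound does not control image radii. Write $\phi(y+w)=\sum_k a_kw^k$ on $\DD(y,t)$. The image radius is $\max_k|a_k|t^k$. The derivative at generic points of $|w|=t$ has norm $\max_k|k a_k|t^{k-1}$, and $|k|<1$ whenever $p\mid k$. So your Newton-polygon claim $|\phi'(y')|=\rho(t_0)/t_0$ fails whenever the unique dominant index is divisible by $p$. That wild phenomenon is exactly the one relevant for $p=2,3$.

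The conclusion you aim for is in fact false. Over $\Ctwo$, let $f(z)=az^2$ with $|a|=16$. On the region $|z|\leq1$, $|f^n(z)|\leq1$ one has
\[
 (f^n)^\#(z)=2^{-n}(16|z|)^{2^n-1}\leq2^{-n}16^{1-2^{-n}}\leq2.
\]
The other regions are similar, so $M=2$ satisfies the hypothesis. Now take $|y|=2^{-2.1}$. Then $\Delta(0,y)<1/M$ and $f(y)\in\DD(0,1)$, yet
\[
 \Delta(f(0),f(y))=16|y|^2=2^{-0.2}>2^{-1.1}=M\Delta(0,y).
\]
So no Lipschitz bound with constant $M$ on chordal balls of radius $1/M$ holds, even for $n=1$ and inside the unit disk. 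Your induction also relies on unproved, partly inaccurate assertions about images of disks. One example is the claim that a bounded derivative prevents $f(W)$ from covering almost all of $\PP(\Cpadic)$.

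\paragraph{What is missing.} You should never estimate the higher Taylor coefficients of $f^n$, since the hypothesis gives no control over them. Instead, use a one-step estimate for $f$ itself with a quadratic error term. A normalized integral lift and the resultant lower bound give constants $C,\eta$, depending only on $f$, such that
\[
 \Delta(f(x),f(y))\leq\max\{f^\#(x)\Delta(x,y),C\Delta(x,y)^2\}
 \qquad\text{whenever }\Delta(x,y)<\eta.
\]
Iterate this along the orbit. Every block product satisfies
\[
 \prod_{\ell=j}^{j+m-1}f^\#(f^\ell(x))=(f^m)^\#(f^j(x))\leq M.
\]
Choose $r<\min\{\eta,1/(CM)\}$ and suppose $\Delta(x,y)<r/M$. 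Then a least-counterexample argument shows all distances $\Delta(f^n(x),f^n(y))$ stay below $r$. The linear part contributes at most $M\Delta(x,y)<r$. The accumulated quadratic errors contribute at most $MCr^2<r$. This is the paper's argument. It gives equicontinuity with a radius that depends on $f$ through $C$ and $\eta$, not merely on $M$.
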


\begin{proof}
We first obtain constants $\eta>0$ and $C\geq1$,
depending only on $f$, such that
\begin{equation}
\label{tk:eq:localdistance}
 \Delta(f(x),f(y))
 \leq\max\{f^\#(x)\Delta(x,y),C\Delta(x,y)^2\}
\end{equation}
for all $x,y\in\PP(\Cpadic)$ with $\Delta(x,y)<\eta$.

Choose a homogeneous lift $[F:G]$ with integral
coefficients and largest coefficient norm one.
Since $F$ and $G$ have no common projective zero,
their B\'ezout identities give a constant
$0<q\leq1$ such that
\[
 \max\{|F(X,Y)|,|G(X,Y)|\}\geq q
 \qquad\text{whenever }\max\{|X|,|Y|\}=1.
\]

For each $x$, choose coordinate changes in
$\operatorname{PGL}_2(\OO)$ sending $x$ and
$f(x)$, respectively, to zero.
They preserve chordal distances and integrality
of the coefficients. In these coordinates,
the numerator has constant term zero and the
denominator has constant term of norm at least $q$.
Expanding the quotient gives
\[
 c_1u+c_2u^2+\cdots,\qquad
 |c_1|=f^\#(x),\qquad
 |c_j|\leq q^{-j}
 \quad\text{for all }j\geq1.
\]
Since the coordinate changes are chordal isometries,
the source coordinate $u$ of $y$ satisfies
$|u|=\Delta(x,y)$ whenever $\Delta(x,y)<q/2$.
For $|u|<q/2$, the terms of degree at least two
have norm at most $q^{-2}|u|^2$, and the image
lies in the open unit disk. Its absolute value
therefore equals $\Delta(f(x),f(y))$.
Thus \eqref{tk:eq:localdistance} holds with
$\eta=q/2$ and $C=q^{-2}$, uniformly in $x$.

Fix $\varepsilon>0$ and choose
\[
 0<r<\min\{\eta,1/(CM),\varepsilon\}.
\]
Let $x,y\in\PP(\Cpadic)$ satisfy $\Delta(x,y)<r/M$, and put
\[
 d_j=\Delta(f^j(x),f^j(y)),\qquad
 a_j=f^\#(f^j(x)).
\]
By the chain rule and the hypothesis,
\[
 \prod_{\ell=j}^{j+m-1}a_\ell
 =(f^m)^\#(f^j(x))\leq M
 \qquad(j,m\geq0),
\]
where an empty product is one.

Suppose that $d_n\geq r$ for some $n$, and choose the
least such $n$. Since $d_0<r/M\leq r$, we have $n\geq1$.
For $0\leq j<n$, we have $d_j<r<\eta$, so
\eqref{tk:eq:localdistance} gives
\[
 d_{j+1}\leq\max\{a_jd_j,Cd_j^2\}.
\]
Repeated substitution yields
\[
 \begin{aligned}
 d_n
 &\leq\max\left\{
 d_0\prod_{\ell=0}^{n-1}a_\ell,\,
 C\max_{0\leq j<n}
 \left(d_j^2\prod_{\ell=j+1}^{n-1}a_\ell\right)
 \right\}\\
 &\leq\max\{Md_0,MC r^2\}<r,
 \end{aligned}
\]
a contradiction.

Thus $\Delta(x,y)<r/M$ implies
\[
 \Delta(f^n(x),f^n(y))<r<\varepsilon
 \qquad\text{for all }n\geq0.
\]
The iterates are equicontinuous, so $\JI(f)=\varnothing$.
\end{proof}
The following theorem shows how to obtain a uniform derivative bound. 

\begin{theorem}
\label{bd:thm:boundedfactor}
Let $f\in\Cpadic(z)$ be nonconstant. Suppose that
there exist a function
\[
 \phi:\PP(\Cpadic)\longrightarrow\mathbb R_{>0}
\]
and constants $0<m\leq M<\infty$ such that
\[
 m\leq\phi(z)\leq M,\qquad
 f^\#(z)\phi(z)\leq\phi(f(z))
 \qquad\text{for all }z\in\PP(\Cpadic).
\]
Then
\[
 (f^n)^\#(z)
 \leq\frac{\phi(f^n(z))}{\phi(z)}
 \leq\frac{M}{m}
 \qquad\text{for all }z\in\PP(\Cpadic)
 \text{ and }n\geq0.
\]
Consequently, $\JI(f)=\varnothing$.
\end{theorem}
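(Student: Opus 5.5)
The plan is a telescoping argument combined with the spherical chain rule, followed by an application of Lemma~\ref{tk:lem:fatou}.

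Fix $z\in\PP(\Cpadic)$ and write $z_j=f^j(z)$ for $j\geq0$. We prove the first inequality by induction on $n$, in the form
\[
 (f^n)^\#(z)\,\phi(z)\leq\phi(z_n).
\]
For $n=0$ the iterate $f^0$ is the identity, whose spherical derivative equals one, so the inequality reads $\phi(z)\leq\phi(z)$.

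For the inductive step, apply the chain rule recorded before Lemma~\ref{tk:lem:fatou} with $m=n$ and with one further step:
\[
 (f^{n+1})^\#(z)=f^\#(z_n)\,(f^n)^\#(z).
\]
Multiply both sides by $\phi(z)$. The inductive hypothesis gives
\[
 (f^{n+1})^\#(z)\phi(z)\leq f^\#(z_n)\phi(z_n).
\]
The one-step hypothesis applied at the point $z_n$ then gives
\[
 f^\#(z_n)\phi(z_n)\leq\phi(f(z_n))=\phi(z_{n+1}).
\]
Since $f^\#\geq0$, multiplying inequalities by it preserves their direction. Dividing by $\phi(z)>0$ yields
\[
 (f^n)^\#(z)\leq\frac{\phi(f^n(z))}{\phi(z)}.
\]
The bounds $\phi(f^n(z))\leq M$ and $\phi(z)\geq m$ then give the uniform bound $M/m$.

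For the consequence, set $M'=\max\{1,M/m\}$. Since $M\geq m$, in fact $M'=M/m\geq1$. Then $(f^n)^\#(x)\leq M'$ for all $x$ and all $n\geq0$. This is exactly the hypothesis of Lemma~\ref{tk:lem:fatou}, which gives equicontinuity and $\JI(f)=\varnothing$.

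There is no real obstacle. The only point requiring care is that the chain rule and the one-step inequality hold at every point of $\PP(\Cpadic)$, including poles and $\infty$. This is built into the definition of $f^\#$ via chordal-isometric local coordinates, and into the hypothesis that the one-step inequality holds for all $z$. Hence no orbit point needs special treatment.
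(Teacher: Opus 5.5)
Your proposal is correct and follows essentially the same route as the paper. The paper writes the chain-rule product $\prod_{j=0}^{n-1}f^\#(f^j(z))$ and telescopes it against the one-step inequality; your induction on $n$ is the same computation. Both then invoke Lemma~\ref{tk:lem:fatou} with the bound $M/m\geq1$.
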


\begin{proof}
For every $z\in\PP(\Cpadic)$ and $n\geq1$,
the chain rule gives
\[
 \begin{aligned}
 (f^n)^\#(z)
 &=\prod_{j=0}^{n-1}f^\#(f^j(z))
 \leq\prod_{j=0}^{n-1}
 \frac{\phi(f^{j+1}(z))}{\phi(f^j(z))}
 =\frac{\phi(f^n(z))}{\phi(z)}
 \leq\frac{M}{m}.
 \end{aligned}
\]
The bound also holds for $n=0$.
Lemma~\ref{tk:lem:fatou} then gives
$\JI(f)=\varnothing$.
\end{proof}

\subsection{The normalized map and its coefficient restrictions}

Write the normalized map of
Theorem~\ref{nm:thm:normalization} as
\begin{equation}
\label{tk:eq:H}
 H(z)=\frac{P(z)}{zQ(z)}
 =\frac{Az^3+Bz^2+Cz+D}{z(1+Ez+Fz^2)},
\end{equation}
where
\begin{equation}
\label{tk:eq:norms}
 |A|=1,\qquad |B|,|C|,|D|<1,\qquad D\ne0,
 \qquad |E|,|F|\leq1.
\end{equation}
Throughout, we assume that $H$ has no classical
repelling periodic point. 

Since $|Q(z)|=1$ for $|z|<1$, zero is the
only pole of $H$ in the open unit disk.
The following lemma describes the disk maps
near this pole.

\begin{lemma}
\label{tk:lem:poledisk}
If $0<|\xi|<1$ and $|D|>|\xi|^2$, then $H$
maps
\[
 \DD(\xi,|\xi|)
 \longrightarrow
 \DD\left(H(\xi),\frac{|D|}{|\xi|}\right)
\]
bijectively, with constant derivative norm
$|D|/|\xi|^2$.
\end{lemma}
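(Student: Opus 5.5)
We expand $H$ around $\xi$ and show that the singular term $D/z$ dominates every other contribution to $H(\xi+t)-H(\xi)$ when $|t|<|\xi|$. Write
\[
 H(z)=\frac{D}{zQ(z)}+\frac{Az^2+Bz+C}{Q(z)}.
\]
For $|z|<1$ we have $|Q(z)|=1$. Put $G(z)=(Az^2+Bz+C)/Q(z)$, which is analytic on $\DD(0,1)$. Its power series has coefficients of norm at most one, because $1/Q$ expands with integral coefficients. Hence $|G(\xi+t)-G(\xi)|\le|t|$ for $|\xi+t|<1$. On $U=\DD(\xi,|\xi|)$ every point $z$ satisfies $|z|=|\xi|$.

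The main computation concerns the polar part $D/(zQ(z))$. First,
\[
 \frac1{\xi+t}-\frac1\xi=-\frac{t}{\xi(\xi+t)},
\]
whose norm is exactly $|t|/|\xi|^2$. Next, $1/Q(z)=1+R(z)$ with $R$ integral, $R(0)=0$, and $|R(z)|\le|z|$, so $|R(\xi+t)-R(\xi)|\le|t|$. Expanding
\[
 \frac{D}{(\xi+t)Q(\xi+t)}-\frac{D}{\xi Q(\xi)}
 =D\Bigl(\frac1{\xi+t}-\frac1\xi\Bigr)\bigl(1+R(\xi+t)\bigr)
 +\frac{D}{\xi}\bigl(R(\xi+t)-R(\xi)\bigr),
\]
the first term has norm exactly $|D||t|/|\xi|^2$, since $|1+R|=1$. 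The second term has norm at most $|D||t|/|\xi|$, which is strictly smaller because $|\xi|<1$. The $G$ contribution is at most $|t|$. This is strictly smaller than $|D||t|/|\xi|^2$ precisely because $|D|>|\xi|^2$, and this is where the hypothesis enters.

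Combining these estimates,
\[
 |H(\xi+t)-H(\xi)|=\frac{|D|}{|\xi|^2}\,|t|\qquad\text{for all }|t|<|\xi|.
\]
So $H$ is an isometry up to the constant factor $\lambda=|D|/|\xi|^2$ on $U$. In particular it is injective there, and it maps $U$ into $\DD(H(\xi),\lambda|\xi|)=\DD(H(\xi),|D|/|\xi|)$. The same dominance argument applied to difference quotients shows $|H'(z)|=\lambda$ on $U$. Concretely, the derivative of $D/z$ has norm $|D|/|\xi|^2$, while the derivatives of the other pieces have norm at most $1$ and $|D|/|\xi|$.

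It remains to prove surjectivity. I would use the power-series description of disk images from \cite{benedetto}. Since $H$ has no pole in $U$, it is analytic there, and $H(\xi+t)=\sum c_jt^j$ with $|c_1|=\lambda$. The exact equality above, combined with the standard fact that the image of an open disk under a convergent power series is the open disk of radius $\sup_j|c_j||\xi|^j$, gives image radius $\lambda|\xi|$. Alternatively, one can count roots of $P(z)-yzQ(z)$ in $U$ via Weierstrass degree and obtain exactly one root for each $y$ in the target disk. Either route yields a bijection onto $\DD(H(\xi),|D|/|\xi|)$. The only delicate point I foresee is confirming that the dominance is strict uniformly, including the sup-radius computation, but both hypotheses $|\xi|<1$ and $|D|>|\xi|^2$ give strict inequalities.
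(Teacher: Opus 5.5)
Your proof is correct and follows essentially the same idea as the paper: the pole term dominates a remainder with integral power-series coefficients because $|D|/|\xi|^2>1$, and the disk-image theorem from \cite{benedetto} then gives the bijection and the image radius. The paper streamlines the bookkeeping by writing $H(z)=D/z+\bigl(Az^2+(B-DF)z+C-DE\bigr)/Q(z)$, so the polar part is exactly $D/z$ and no correction term $R$ is needed, and it bounds the Taylor coefficients at $\xi$ directly ($|a_1|=|D|/|\xi|^2$, $|a_k|\leq|D|/|\xi|^{k+1}$) instead of going through difference estimates and recentering.
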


\begin{proof}
Write
\[
 H(z)=\frac Dz+
 \frac{Az^2+(B-DF)z+C-DE}{Q(z)}.
\]
The second term has integral power-series
coefficients on $\DD(0,1)$. Thus, in the expansion
$H(\xi+w)-H(\xi)=\sum_{k\geq1}a_k w^k$,
\[
 |a_1|=\frac{|D|}{|\xi|^2},
 \qquad
 |a_k|\leq\frac{|D|}{|\xi|^{k+1}}
 \quad(k\geq2).
\]
For $|w|<|\xi|$, the linear term strictly
dominates all higher terms. The analytic
disk-mapping theorem gives the bijection
and its image radius $|D|/|\xi|$.
For every $z\in\DD(\xi,|\xi|)$, these estimates give
\[
 |H'(z)|=\frac{|D|}{|\xi|^2}.
\]
\end{proof}

\begin{proposition}
\label{tk:prop:coefficientbounds}
\label{bd:prop:linear}
We have:
\begin{enumerate}
 \item $|D|\geq|C|^2$.
 \item $|B|\leq\max\{|C|^{1/2},|D|^{1/3}\}$.
 \item If $p=2$ and $|C|>|D|^{2/3}$, then
 $|B|<|C|^{1/2}$.
\end{enumerate}
\end{proposition}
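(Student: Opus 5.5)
Each part is proved by contradiction. Assuming the stated inequality fails, we produce a classical fixed point of $H$ or of $H^2$ whose multiplier has norm greater than one. This contradicts the standing assumption that $H$ has no classical repelling periodic point. The tools are Newton polygons, Lemma~\ref{tk:lem:poledisk} for the branch of $H$ near the pole, and, for the return maps in parts~(2) and~(3), Lemmas~\ref{tk:lem:refine} and~\ref{tk:lem:count}.

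For (1), assume $|D|<|C|^2$ and look at the fixed-point equation $P(z)=z^2Q(z)$, that is,
\[
 Fz^4+(E-A)z^3+(1-B)z^2-Cz-D=0 .
\]
The coefficient of $z^2$ has norm one, that of $z$ has norm $|C|$, and the constant term has norm $|D|$. When $|D|<|C|^2$, the Newton polygon has a segment of slope giving a root $\xi$ with $|\xi|=|D|/|C|$. The hypothesis gives $|\xi|^2=|D|^2/|C|^2<|D|$, so Lemma~\ref{tk:lem:poledisk} applies at $\xi$. It gives
\[
 |H'(\xi)|=\frac{|D|}{|\xi|^2}=\frac{|C|^2}{|D|}>1 ,
\]
so $\xi$ is a repelling fixed point, a contradiction.

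For (2) and (3), the mechanism is a period-two return: a point near a zero of $P$ is mapped close to the pole $0$, and then the pole branch expands a small disk onto a large disk around $0$. For (2), assume $|B|>\max\{|C|^{1/2},|D|^{1/3}\}$. Then $|C|<|B|^2$ and $|D|<|B|^3$, so the Newton polygon of $P$ gives a simple root $\zeta$ with $|\zeta|=|B|$, isolated from the other roots at distance comparable to $|B|$. On a small disk around $\zeta$, $H$ is injective with derivative norm $|P'(\zeta)|/|\zeta|=|B|^2/|B|=|B|$. We then choose a radius $s$ and a point $\xi$ with $|\xi|=s$ subject to three conditions:
\[
 s<|D|/|B|,\qquad s<|B|^2,\qquad s^2<|D| .
\]
These are compatible, and the first two together use $|D|<|B|^3$. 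Lemma~\ref{tk:lem:poledisk} maps $\DD(\xi,s)$ bijectively onto a disk of radius $|D|/s>|B|$ around $H(\xi)$. We choose $\xi$ so that $|H(\xi)|$ is small, which makes this image disk equal to $\DD(0,R)$ with $R=|D|/s>|B|$. The preimage of $\DD(\xi,s)$ in the disk around $\zeta$ is a disk $U$ of radius $s/|B|<|B|$, so $\sup_U|z|=|B|<R$. Lemma~\ref{tk:lem:count} then gives a fixed point of $H^2$ in $U$, with
\[
 |(H^2)'|=|B|\cdot\frac{|D|}{s^2}>|B|\cdot\frac{|B|^2}{|D|}=\frac{|B|^3}{|D|}>1 ,
\]
which is repelling, a contradiction.

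For (3), with $p=2$ and $|C|>|D|^{2/3}$, part~(2) forces $|B|\le|C|^{1/2}$, so it suffices to exclude $|B|=|C|^{1/2}$. In that boundary case the Newton polygon of $P$ has a segment of length two with roots $\zeta$ of size $|C|^{1/2}$. This is where residue characteristic two is used. Such a root satisfies $A\zeta^2+B\zeta+C\approx0$, hence
\[
 P'(\zeta)/\zeta\approx 3A\zeta^2+2B\zeta+C\approx 2A\zeta^2-B\zeta+2B\zeta .
\]
Because $|2|<1$, this has norm $|B\zeta|=|C|$. Consequently the two roots are distinct and simple, and $|H'|=|C|^{1/2}$ near each of them. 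We repeat the construction of part~(2) with $s<\min\{|C|,\,|D|/|C|^{1/2},\,|D|^{1/2}\}$; the first two bounds are compatible because $|D|<|C|^{3/2}$. The resulting fixed point of $H^2$ has
\[
 |(H^2)'|>|C|^{1/2}\cdot\frac{|C|}{|D|}=\frac{|C|^{3/2}}{|D|}>1 .
\]

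The main obstacle is making the two-step return rigorous. Three things must be checked. First, the disk around $\zeta$ must be mapped by $H$ with mapping degree one onto a disk containing $\DD(\xi,s)$; Lemma~\ref{tk:lem:refine} handles the refinement. Second, one must exhibit a $\xi$ with $|\xi|=s$ such that $\DD(H(\xi),|D|/s)=\DD(0,|D|/s)$. I would take $\xi$ close to a root of $P$ near the pole, or simply note that $|H(\xi)|\approx|D|/s$, so the image disk of radius $|D|/s$ contains $0$. Third, for (3) the separation of the two roots of size $|C|^{1/2}$ must be controlled; this uses the $p=2$ computation above.
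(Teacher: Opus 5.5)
Your part (1) is correct and is the paper's argument. Parts (2) and (3), however, have a genuine gap at the step you flagged second: the return does not close.

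With $|\xi|=s<|D|/|B|$, every term of $P(\xi)$ other than $D$ is strictly smaller than $|D|$. Indeed $|C|s<|C||D|/|B|<|B||D|$, $|B|s^2<|D|^2/|B|<|D|$, and $s^3<|D|$. Hence $|P(\xi)|=|D|$ and $|H(\xi)|=|D|/s$ exactly. Equivalently, every zero of $P$ has norm strictly greater than $|D|/|B|$, so no zero of $P$ lies at that scale to pull $H(\xi)$ toward $0$. The open disk $\DD(H(\xi),|D|/s)$ therefore consists of points of norm exactly $|D|/s$. It contains neither $0$ nor your disk $U$, whose points have norm $|B|<|D|/s$. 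So $H^2$ sends $U$ to a disk disjoint from $U$, and Lemma~\ref{tk:lem:count} cannot be applied. The inference ``$|H(\xi)|\approx|D|/s$, so the image disk of radius $|D|/s$ contains $0$'' is precisely what fails in the ultrametric setting. The same failure occurs verbatim in (3), since there too every zero of $P$ has norm $|C|^{1/2}$ or $|D|/|C|$, both larger than $|D|/|C|^{1/2}$.

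The missing idea is to aim the pole branch at $\zeta$ rather than at $0$, by solving $H(\xi)=\zeta$. Since $|C-\zeta|=|B|$, the Newton polygon of $(A-\zeta F)z^3+(B-\zeta E)z^2+(C-\zeta)z+D$ gives a root with $|\xi|=|D|/|B|$. Lemma~\ref{tk:lem:poledisk} applies because $|B|^2>|D|$, and it maps $\DD(\xi,|D|/|B|)$ bijectively onto $\DD(\zeta,|B|)$. This disk in turn maps bijectively onto $\DD(0,|B|^2)$. Because $|D|/|B|<|B|^2$, the first disk lies in the last. Lemma~\ref{tk:lem:count} then gives a fixed point of $H^2$ with multiplier norm $(|B|^2/|D|)\cdot|B|=|B|^3/|D|>1$. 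This is the paper's proof. Note that the radius is forced to equal $|D|/|B|$ rather than be strictly smaller.

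In your ordering, the same repair works: take $U$ to be the preimage of $\DD(\xi,|D|/|B|)$ inside $\DD(\zeta,|B|)$. Then $H^2(U)=\DD(\zeta,|B|)\supset U$, and a translate of Lemma~\ref{tk:lem:count} applies.

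Part (3) is then identical with $|B|=|C|^{1/2}$. Your characteristic-two computation gives $|P'(\zeta)|=|C|$; you wrote $P'(\zeta)/\zeta$ where you meant $P'(\zeta)$. Hence $|H'|=|C|^{1/2}$ on $\DD(\zeta,|C|^{1/2})$, and the multiplier norm is $|C|^{3/2}/|D|>1$.
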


\begin{proof}
We prove each part by contradiction.

\begin{enumerate}
\item Suppose that $|D|<|C|^2$.
Clearing denominators in the equation $H(z)=z$ gives
\[
 P(z)-z^2Q(z)
 =-Fz^4+(A-E)z^3+(B-1)z^2+Cz+D=0.
\]
Its coefficients of degree at least two have
norm at most one. The Newton polygon segment joining the constant
and linear coefficients therefore gives a fixed point
$x$ with
\[
 |x|=\frac{|D|}{|C|}<|C|<1.
\]
Since $|D|/|x|^2=|C|^2/|D|>1$,
Lemma~\ref{tk:lem:poledisk} gives
\[
 |H'(x)|=\frac{|C|^2}{|D|}>1,
\]
a contradiction.

\item Suppose that
$|B|>\max\{|C|^{1/2},|D|^{1/3}\}$.
We construct two successive disk maps whose
composition has a repelling fixed point.

The Newton polygon of $P$ has a segment of
horizontal length one joining the coefficients
of $z^2$ and $z^3$. It gives a root $\zeta$
with $|\zeta|=|B|$, so $H(\zeta)=0$.
Using $P(\zeta)=0$, we have
\[
 A\zeta+B=-\frac{C}{\zeta}-\frac{D}{\zeta^2},
 \qquad |A\zeta+B|<|B|.
\]
Therefore
\[
 P'(\zeta)=A\zeta^2+2\zeta(A\zeta+B)+C
\]
has norm $|B|^2$, and $|H'(\zeta)|=|B|$.
The coefficients of degree $k\geq2$ in the
expansion of $H$ at $\zeta$ have norm at most
$|B|^{2-k}$. Hence the linear term dominates
on $\DD(\zeta,|B|)$, giving a bijective map
\[
 H:\DD(\zeta,|B|)\longrightarrow\DD(0,|B|^2)
\]
with constant derivative norm $|B|$.

To obtain the first disk, solve $H(z)=\zeta$:
\[
 (A-\zeta F)z^3+(B-\zeta E)z^2
 +(C-\zeta)z+D=0.
\]
Since $|C-\zeta|=|B|$, the Newton polygon segment joining the constant
and linear coefficients gives a root $\xi$
with $|\xi|=|D|/|B|$.
Lemma~\ref{tk:lem:poledisk} applies as
$|B|^2/|D|>1$. We obtain
\[
 \DD(\xi,|D|/|B|)
 \xrightarrow{H}\DD(\zeta,|B|)
 \xrightarrow{H}\DD(0,|B|^2),
\]
with derivative norms $|B|^2/|D|$ and $|B|$,
respectively.

Since $|\xi|=|D|/|B|<|B|^2$, the first disk
lies in the final target disk.
Lemma~\ref{tk:lem:count} gives a fixed point
$h$ of $H^2$ in the first disk, with
\[
 |(H^2)'(h)|=\frac{|B|^3}{|D|}>1,
\]
a contradiction.

\item Suppose that $p=2$ and $|C|>|D|^{2/3}$.
Part~(2) gives $|B|\leq|C|^{1/2}$.
If equality holds, then
\[
 |C|=|B|^2,\qquad |D|<|B|^3.
\]
The Newton polygon of $P$ gives two roots with absolute
value $|B|$.
Choose one, denoted by $\zeta$.
Using $P(\zeta)=0$, we obtain
\[
 P'(\zeta)=2A\zeta^2+B\zeta-\frac{D}{\zeta}.
\]
Since $p=2$, the term $B\zeta$ strictly
dominates. Thus $|P'(\zeta)|=|B|^2$ and
$|H'(\zeta)|=|B|$.

The coefficients of $w^k$ in $H(\zeta+w)$ have norm
at most $|B|^{2-k}$ for $k\geq2$.
Since $|H'(\zeta)|=|B|$, the linear term dominates
on $\DD(\zeta,|B|)$. Hence
\[
 H:\DD(\zeta,|B|)\longrightarrow\DD(0,|B|^2)
\]
is bijective.

The Newton polygon of the equation $H(z)=\zeta$ gives
a solution $\xi$ with $|\xi|=|D|/|B|$.
Lemma~\ref{tk:lem:poledisk} gives
\[
 \DD(\xi,|D|/|B|)
 \xrightarrow{H}\DD(\zeta,|B|)
 \xrightarrow{H}\DD(0,|B|^2).
\]
Since $|D|/|B|<|B|^2$, Lemma~\ref{tk:lem:count}
gives a fixed point $h$ of $H^2$ in $\DD(\xi,|D|/|B|)$
with
\[
 |(H^2)'(h)|
 =\frac{|B|^3}{|D|}
 =\frac{|C|^{3/2}}{|D|}>1.
\]
This contradiction proves the strict inequality.
\end{enumerate}
\end{proof}

Proposition~\ref{tk:prop:coefficientbounds} divides the
remaining analysis into three mutually exclusive cases:
\begin{enumerate}
 \item Case~1: $|C|^2<|D|<|C|^{3/2}$;
 \item Case~2: $|D|\geq|C|^{3/2}$;
 \item Case~3: $|D|=|C|^2$.
\end{enumerate}
These exhaust all possibilities, with $C=0$
belonging to Case~2. In Cases~1 and~3,
$|B|\leq|C|^{1/2}$, strictly when $p=2$.
In Case~2, $|B|\leq|D|^{1/3}$.

For the subsequent case analysis, we record the derivative
estimates used to bound products along orbits and to construct
a function $\phi$ bounded above and bounded away from zero.

\begin{lemma}
\label{tk:lem:derivativeestimates}
For $z\ne0$ away from the poles of $H$,
\begin{equation}
\label{tk:eq:derivative}
 H'(z)=
 \frac{Az(2+Ez)+B(1-Fz^2)-C(E+2Fz)
       -D(z^{-2}+2Ez^{-1}+3F)}{Q(z)^2}.
\end{equation}
In particular,
\begin{equation}
\label{tk:eq:derivativebound}
 |H'(z)|\leq
 \max\{|2|\,|z|,|z|^2,|B|,|C|,|D|/|z|^2\}
 \qquad \textrm{for }0<|z|<1.
\end{equation}
Moreover,
\[
 \mathcal E=\{z\in\Cpadic:|z|\geq1\}\cup\{\infty\}
\]
is forward invariant, and $H^\#(z)\leq1$
for every $z\in\mathcal E$.
\end{lemma}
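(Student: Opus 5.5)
The lemma has three parts: the derivative formula, the bound \eqref{tk:eq:derivativebound}, and the invariance of $\mathcal E$ with $H^\#\leq1$ there. I would prove them in that order.

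\textbf{Formula.} Write $H=P/(zQ)$. Then
\[
 H'=\frac{P'\cdot zQ-P\,(zQ)'}{z^2Q^2},
 \qquad (zQ)'=1+2Ez+3Fz^2 .
\]
Expand the numerator coefficient by coefficient. The $z^4$ terms cancel. What remains is
\[
 Az^2(2+Ez)+Bz^2(1-Fz^2)-Cz^2(E+2Fz)-D(1+2Ez+3Fz^2).
\]
Dividing by $z^2$ gives \eqref{tk:eq:derivative}. This is routine algebra, and I would just check each coefficient.

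\textbf{Bound for $0<|z|<1$.} Here $|Q(z)|=1$, so the denominator is harmless. I would bound each group of the numerator by the ultrametric inequality, using $|E|,|F|\leq1$:
\begin{itemize}
 \item the $A$-term has norm at most $\max\{|2||z|,|z|^2\}$;
 \item the $B$-term has norm at most $|B|$;
 \item the $C$-term has norm at most $|C|$;
 \item the $D$-term has norm at most $|D|\max\{|z|^{-2},|z|^{-1},1\}=|D|/|z|^2$.
\end{itemize}
This gives \eqref{tk:eq:derivativebound}.

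\textbf{Forward invariance of $\mathcal E$.} For $|z|>1$, the term $Az^3$ strictly dominates the numerator, so $|P(z)|=|z|^3$. In the denominator, $|zQ(z)|\leq|z|\max\{1,|z|,|z|^2\}=|z|^3$. Hence $|H(z)|\geq1$.

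For $|z|=1$ the numerator still has norm $1$, since $|A|=1$ dominates $B,C,D$. But $|1+Ez+Fz^2|$ may be less than $1$, so I again get $|H(z)|\geq1$. This includes $H(z)=\infty$ when $z$ is a pole.

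At $\infty$, the map $H$ sends $\infty$ to $A/F$ if $F\ne0$, and to $\infty$ otherwise. Since $|A/F|\geq1$, the image lies in $\mathcal E$.

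\textbf{The bound $H^\#\leq1$ on $\mathcal E$.} This is the main obstacle. I would avoid explicit derivative estimates and use a structural argument. Since $H$ has integral coefficients and the reduction is nonconstant, $H$ is $1$-Lipschitz for the chordal metric. I would prove this from the homogeneous lift: write $F(X,Y)G(X',Y')-G(X,Y)F(X',Y')$ as a multiple of $XY'-X'Y$ with integral coefficients. The lift need not be primitive at every point, so I would instead use the resultant bound on $\mathcal E$. The key observation is
\[
 \max\{|F(z,1)|,|G(z,1)|\}=|z|^3
 \quad\text{for } |z|\geq1,
\]
because $|A|=1$. This follows from the same norm computations as in the invariance step.

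Then for $x,y\in\mathcal E$,
\[
 \Delta(H(x),H(y))=\frac{|F(x)G(y)-G(x)F(y)|}{\|(F,G)(x)\|\,\|(F,G)(y)\|}
 \leq\frac{|x-y|\,|x|^2|y|^2}{|x|^3|y|^3}=\Delta(x,y).
\]
The numerator bound uses that $F(x)G(y)-G(x)F(y)=(x-y)S(x,y)$ with $S$ integral of bidegree at most $(2,2)$, and that the denominators are exact by the observation above. Letting $y\to x$ gives $H^\#(x)\leq1$.

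The point $\infty$ is handled the same way in the coordinate $1/z$, where $\mathcal E$ becomes the closed unit disk and the same bounds apply.
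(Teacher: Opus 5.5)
Your argument is correct. The derivative formula, the bound for $0<|z|<1$, and forward invariance go as in the paper; the paper gets invariance from its chart, and your direct norm count works equally well. There are two harmless algebra slips. It is the $z^5$ terms that cancel, and the $A$-part of the numerator before dividing by $z^2$ is $Az^3(2+Ez)$, not $Az^2(2+Ez)$.

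For $H^\#\le1$ on $\mathcal E$ you take a genuinely different route. The paper passes to $w=1/z$, where $1/H(1/w)=(w^2+Ew+F)/(A+Bw+Cw^2+Dw^3)$ has a unit denominator on $|w|\le1$. So it is an integral power series there. This gives at once that the closed unit disk is mapped into itself and that the derivative norm is at most one, and inversion is a chordal isometry.

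You instead combine three ingredients: the homogeneous lift $(P(z),zQ(z))$, its exact norm $|z|^3$ on $\mathcal E$, and the factorization $P(x)\,yQ(y)-xQ(x)\,P(y)=(x-y)S(x,y)$ with $S$ integral of bidegree at most $(2,2)$. From these you get $\Delta(H(x),H(y))\le\Delta(x,y)$ for all $x,y\in\mathcal E$, and then let $y\to x$. Your version costs more algebra. In return it proves the stronger, non-infinitesimal statement that $H$ is chordally $1$-Lipschitz on all of $\mathcal E$, and it treats poles in $\mathcal E$ and the point $\infty$ uniformly. The paper's chart argument is shorter. Both rest on the same fact: $|A|=1$ makes the lift primitive at every point of $\mathcal E$.

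Delete your opening claim that $H$ is $1$-Lipschitz everywhere because it has integral coefficients. That is false here, since $H^\#(z)=|D|^{-1}>1$ for $0<|z|\le|D|$. What your argument actually uses is not a resultant bound but the exact norm computation on $\mathcal E$.
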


\begin{proof}
The derivative formula follows by differentiation.
Since $|Q(z)|=1$ for $|z|<1$, the ultrametric
triangle inequality gives the upper bound.

In the coordinate $w=1/z$, the conjugate map is
\[
 \frac1{H(1/w)}
 =\frac{w^2+Ew+F}{A+Bw+Cw^2+Dw^3}.
\]
On $|w|\leq1$, its denominator has norm one,
and the map has integral power-series
coefficients. It therefore preserves the
closed unit disk and has derivative norm
at most one there. Inversion preserves the
chordal metric, giving the assertions on
$\mathcal E$.
\end{proof}

If $\phi$ is constant on $\mathcal E$,
Lemma~\ref{tk:lem:derivativeestimates}
verifies $H^\#(z)\phi(z)\leq\phi(H(z))$
on this region, leaving only the open unit
disk to consider.

\section{The two-adic case}
\label{sec:two-adic}

The goal of this section is to prove a uniform bound
on the spherical derivatives of all iterates.

\begin{theorem}
\label{sy:cor:normalform}
Let $H\in\Ctwo(z)$ satisfy
\eqref{tk:eq:H}--\eqref{tk:eq:norms}.
If $H$ has no classical repelling periodic point, then there
exists a finite constant $M\geq1$, depending only on $H$, such that
\[
 (H^n)^\#(z)\leq M
 \qquad\text{for all }z\in\PP(\Ctwo)
 \text{ and }n\geq0.
\]
Consequently, $\JI(H)=\varnothing$.
\end{theorem}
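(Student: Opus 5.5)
The plan is to build, in each of the three cases of Proposition~\ref{tk:prop:coefficientbounds}, a weight function $\phi$ as in Theorem~\ref{bd:thm:boundedfactor}. We take $\phi\equiv1$ on $\mathcal E$, so Lemma~\ref{tk:lem:derivativeestimates} leaves only the open unit disk to handle. Since $p=2$, the bound \eqref{tk:eq:derivative} becomes
\[
 |H'(z)|\leq\max\{|z|^2/2,\;|B|,\;|C|,\;|D|/|z|^2\},
\]
with $|2|=1/2$ and $|Q(z)|=1$. The term $|2||z|$ is at most $1/2$ and only matters near the unit circle, where $H$ behaves like the reduced quadratic polynomial.

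On $\DD(0,1)$ we use $\phi(z)=\psi(|z|)$, a piecewise power function of $r=|z|$. It equals $1$ on a collar $\rho\leq r<1$, where $H^\#\leq1$ holds directly from the bound above. Near the pole it has the form $\phi\approx c\,r^{-\alpha}$ or $c\,r^{\beta}$. Exponents are chosen so that $H^\#(z)\psi(|z|)\leq\psi(|H(z)|)$ holds on each annulus, using the Newton polygon of $P$ to compute $|H(z)|$ as a function of $|z|$. Outside small disks around zeros of $P$ and around solutions of $H(z)=z$, $|H(z)|$ equals $\max\{|D|/|z|,|C|/|z|,|B|,|z|^2\}$ up to the dominant-term analysis.

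Near the pole, Lemma~\ref{tk:lem:poledisk} gives $|H'|=|D|/|z|^2$ and $|H(z)|=|D|/|z|$ when $|z|^2<|D|$. The point $z$ is thus sent to the region $|w|\geq|D|^{1/2}$ with expansion factor $|D|/|z|^2$. The weight must absorb this, so we aim for $\phi(z)\asymp |z|^{-1}$ on a lower range. Then the spherical expansion $|D|/r^2$ is matched by $\psi(|D|/r)/\psi(r)=r^2/|D|$ when $\psi(r)=r^{-1}$ on both sides, an exact cancellation. This explains the choice $\alpha=1$ near the pole.

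Near the zeros of $P$, where $|z|\approx|B|$ or $|C/B|$, the derivative is controlled by the coefficient restrictions. In Case~2 we have $|B|\leq|D|^{1/3}$. In Cases~1 and~3 we have $|B|<|C|^{1/2}$ strictly since $p=2$, and $|D|\geq|C|^2$. These restrictions should give contraction factors at most $1$ there, except in borderline subcases.

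The main obstacle is the boundary regime in Cases~1 and~3, where $|D|$ is near $|C|^2$ and orbits can travel near the annulus $|z|\approx|C|$ repeatedly. In that regime the fixed point of $H$ with $|x|=|D|/|C|$ has multiplier of norm $|C|^2/|D|\leq1$, which may equal $1$. A single-step $\phi$ may then fail, and we would instead multiply derivative norms along orbit segments between successive visits to a specified circle, $|z|=|C|$ or $|z|=|D|^{1/2}$. The excursion products should be bounded by comparing each return map with the two-step compositions built in the proof of Proposition~\ref{tk:prop:coefficientbounds}. Any excursion with product exceeding $1$ would, via Lemmas~\ref{tk:lem:refine} and~\ref{tk:lem:count}, produce a repelling fixed point of an iterate, contradicting the hypothesis. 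Those extra coefficient restrictions yield a bound on the products.

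Either route gives a uniform bound $M$ on $(H^n)^\#$, and Lemma~\ref{tk:lem:fatou} gives $\JI(H)=\varnothing$.
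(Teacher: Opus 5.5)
You have a genuine gap, and two of your concrete ingredients are wrong. Near the pole you need $H^\#(z)\leq\psi(|H(z)|)/\psi(|z|)$ with $H^\#(z)=|D|/r^2$ and $|H(z)|=|D|/r$. The weight $\psi(r)=r^{-1}$ gives the ratio $r^2/|D|$, the reciprocal of what is required, so the inequality fails throughout $r^2<|D|$; this $\psi$ is also unbounded at $0$. Exact cancellation comes from the increasing weight $\psi(r)=r$, truncated below by a constant, which is what the paper uses: $\max\{|D|,\lambda\delta,\delta^2\}$ in Case~1 and $\max\{d^3,\delta\}$ in Case~2. Your derivative bound is also incorrect. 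The numerator of \eqref{tk:eq:derivative} contains $2Az$, of norm $|z|/2$, and $AEz^2$, of norm up to $|z|^2$, as in \eqref{tk:eq:derivativebound}. The term $|z|/2$ dominates $|z|^2$ exactly when $|z|<1/2$, so it matters away from the unit circle, not near it. It produces the factors $1/2$ in the paper's multiplier computations, such as $1/(2\lambda^{1/2})$ and $1/(2d^{1/2})$.

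The main gap is that the restrictions of Proposition~\ref{tk:prop:coefficientbounds} are not sufficient, and you give no mechanism for producing the ones that are. They are compatible with configurations that carry repelling cycles, such as Case~1 with $\lambda=|D|/|C|<1/4$, and along such a cycle $(H^n)^\#$ is unbounded. So the no-repelling hypothesis must be used a second time. The paper does this with explicit three- and four-step disk compositions, which give:
\begin{itemize}
\item in Case~1, $\lambda\geq1/4$ and $|B|\leq\lambda$;
\item in Case~2 with $d<1/4$, $R<d$, $d\geq2^{-6}$ and $R^2\leq\min\{2d^{5/2},4d^{11/4}\}$;
\item in Case~3 with $c<1/2$, $|B|=c/2$, $R\leq c^2$, $\DD(C,c^2)=\DD(\gamma,c^2)$, $c\geq2^{-3/2}$ and $|C-\gamma|\leq4c^4$.
\end{itemize}

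Moreover, no radial weight $\psi(|z|)$ can work in Case~3 when $c<1/2$. On $\DD(\gamma,c^2)$ the map is an isometry with $H^\#=1$, $|w|=c^2$, and $|H(w)|=|w-\gamma|$ taking every value below $c^2$; this forces $\psi(t)\geq\psi(c^2)$ for all $t<c^2$. The pole step then forces $\psi(u)\geq(u^2/c^4)\psi(c^2)$ for $c^2<u<1$. But on $\sqrt2\,c^{3/2}<|z|<c$ one has $|H'(z)|=c/2$ and $|H(z)|=c^2$, which forces $(c/2)\psi(|z|)\leq\psi(c^2)$. These are incompatible because $|z|^2>2c^3$ there. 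This is why the paper measures $|z-\gamma|$ on $\DD(\gamma,c^2)$, and why it needs $|C-\gamma|\leq4c^4$ to rule out cancellation in $H(z)-\gamma$ on the annulus $c^2<|z|<c$.

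Finally, your fallback is not a general principle. You claim that any excursion with derivative product above one yields a repelling point through Lemmas~\ref{tk:lem:refine} and~\ref{tk:lem:count}. That requires a chain of finite surjective disk maps with constant derivative norms whose first disk $U$ satisfies $\sup_{z\in U}|z|<R$ for the final target $\DD(0,R)$. Constructing such chains case by case is the real content of the proof. The paper uses returns to a circle only in Case~2, on $|z|=d$; Cases~1 and~3 are handled by a single weight once the additional restrictions are known.
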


We establish the bound case by case.
Return maps between disks first give the necessary coefficient
restrictions. In Cases~1 and~3, multiplying the derivative
inequalities along an orbit cancels the intermediate values
of an auxiliary function $\phi$.
In Case~2, we first bound derivative products between
successive visits to the circle $|z|=d$.
Once the uniform bound is proved in all three cases,
Lemma~\ref{tk:lem:fatou} shows that the classical Julia set is empty.

Retain the notation $P,Q,\mathcal E$ from
Section~\ref{sec:foundations}, and put
\[
 \delta(z)=
 \begin{cases}
  \min\{|z|,1\},&z\in\Ctwo,\\
  1,&z=\infty.
 \end{cases}
\]

\subsection
{Case 1: \texorpdfstring{$|C|^2<|D|<|C|^{3/2}$}{|C|\string^2 < |D| < |C|\string^(3/2)}}
\label{sp:sec:plateau}

Put
\[
 c=|C|^{1/2},\qquad \lambda=|D|/|C|.
\]
Then
\[
 |C|=c^2,\qquad |D|=c^2\lambda,\qquad
 c^2<\lambda<c<1,\qquad |B|<c.
\]
\begin{lemma}
\label{sp:lem:coefficientbounds}
We have
\[
 \lambda\geq\frac14,\qquad |B|\leq\lambda.
\]
\end{lemma}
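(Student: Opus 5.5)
Both inequalities will be proved by contradiction, in the style of Proposition~\ref{tk:prop:coefficientbounds}. If either fails, we build a short chain of disk maps ending in a disk $\DD(0,R)$ that contains the first disk. Lemma~\ref{tk:lem:count} then gives a fixed point of an iterate of $H$. Multiplying the derivative norms along the chain, using Lemma~\ref{tk:lem:poledisk} near the pole and the expansion of $H$ at the intermediate points, shows this fixed point is repelling, contradicting the standing assumption. Throughout, $|C|=c^2$, $|D|=c^2\lambda$, $c^2<\lambda<c$ and $|B|<c$.

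For $|B|\leq\lambda$, suppose $|B|>\lambda$. Consider the Newton polygon of $P(z)=Az^3+Bz^2+Cz+D$. The relevant vertices are at degrees $0,1,2,3$ with norms $c^2\lambda$, $c^2$, $|B|$, $1$. Since $|B|>\lambda$, the point of degree one lies on or above the segment joining degrees $0$ and $2$. So $P$ has a root $\zeta$ with $|\zeta|=|B|$, from the segment joining degrees $2$ and $3$, together with two roots of size $(|D|/|B|)^{1/2}$, or of sizes $|D|/|C|$ and $|C|/|B|$. I would take the largest small root $\zeta$ and compute $|P'(\zeta)|$ exactly, as in part~(2) of Proposition~\ref{tk:prop:coefficientbounds}. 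This gives a bijection $H:\DD(\zeta,|\zeta|)\to\DD(0,|\zeta|\cdot|H'(\zeta)|)$ with constant derivative norm. I then solve $H(z)=\zeta$ by the Newton polygon to get a preimage $\xi$ near the pole with $|\xi|\approx|D|/|\zeta|$. Lemma~\ref{tk:lem:poledisk} gives a degree-one map $\DD(\xi,|\xi|)\to\DD(\zeta,|\zeta|)$. The composition $H^2$ maps $\DD(\xi,|\xi|)$ onto a disk centered at $0$, and the derivative product is $|D||H'(\zeta)|/|\xi|^2$. The inequality $|B|>\lambda$ should be exactly what makes both $|\xi|<R$ and this product exceed one. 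If a single two-step chain is not enough, I would insert intermediate disks using Lemma~\ref{tk:lem:refine}.

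For $\lambda\geq\frac14$, the factor $|2|=\frac12$ suggests the obstruction comes from the term $|2|\,|z|$ in \eqref{tk:eq:derivativebound}, that is, from $2Az$ in the numerator of \eqref{tk:eq:derivative}. This is the term that survives after the cancellation among the other terms on the circle where $H$ behaves like $z\mapsto D/z+C$. I would look at the zero $\zeta$ of $H$ with $\zeta\approx-D/C$, so $|\zeta|=\lambda$. On the circle $|z|=\lambda$, the dominant terms $-D/z^2$ and $C$ in $H'$ need not cancel, giving $|H'|\approx c^2/\lambda$. The next step is to follow $\DD(\zeta,\lambda)\to\DD(0,r)$ and pull back through the pole using Lemma~\ref{tk:lem:poledisk}, as above. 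Alternatively, I would use the two fixed points with $|x|=|D|^{1/2}$ given by the Newton polygon of $P(z)-z^2Q(z)$. Their multipliers have the form $-1+C/x+2(\cdots)$, so $H^2$ is close to the identity there, and a disk-map return for $H^2$ can pick up the factor $|2|$.

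The main obstacle is this second inequality: pinning down which return chain produces a derivative product comparable to $\frac14/\lambda$, and verifying the dominance of $2Az$ over the remaining terms in $H'$ along it. I would first try the chain through the two near-fixed points of size $|D|^{1/2}$, where the $-1$ parts of the multipliers cancel in $H^2$ and leave the factor $|2|$.
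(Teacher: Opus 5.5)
There is a genuine gap in each half.

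Your argument for $|B|\le\lambda$ rests on a Newton polygon that is not the one of $P$. In Case~1 you already have $|B|<c$ and $\lambda<c$, so $\lambda|B|<c^2$, that is, $|C|^2>|D|\,|B|$. Hence the degree-one coefficient is a vertex. Since $|B|<c=(|C|\,|A|)^{1/2}$, the degree-two coefficient lies above the segment from degree one to degree three. So, whatever $|B|$ is, $P$ has one zero of absolute value $\lambda$ and two zeros of absolute value $c$. There is no zero of absolute value $|B|$, and the part-(2) construction has nothing to start from. At a zero $\xi$ with $|\xi|=c$, the reduction of $H(\xi u)/C$ is $\bar u^2+1=(\bar u+1)^2$ in characteristic two, so $H:\DD(\xi,c)\to\DD(0,c^2)$ has degree two, not one. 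The argument that does work here goes as follows:
\begin{itemize}
\item pull this disk back through the pole to $\DD(\eta,c\lambda)$ with $|\eta|=c\lambda<c^2$;
\item get a fixed point $h$ of $H^2$ with $|H'(h)|=1/\lambda$, so $|H'(x)|\le\lambda$ at $x=H(h)$;
\item read $|2Ax+B|\le\lambda$ off \eqref{tk:eq:derivative} on $|z|=c$.
\end{itemize}
This yields only $|B|\le\max\{\lambda,c/2\}$. If $|B|=c/2>\lambda$, there is no contradiction. So the two inequalities cannot be proved independently. The bound $|B|\le\lambda$ follows only after $\lambda\ge\frac14$ is known, via $c<\lambda^{1/2}\le2\lambda$. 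Conversely, the intermediate bound $|B|\le\max\{\lambda,c/2\}$ is an input to the proof of $\lambda\ge\frac14$.

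For $\lambda\ge\frac14$ you have no working return, and the one you name fails. Pulling $\DD(\zeta,\lambda)\to\DD(0,c^2)$ back directly through the pole gives $\DD(\xi,c^2)$ with $|\xi|=|D|/\lambda=c^2$. Then $\sup_{z\in\DD(\xi,c^2)}|z|<c^2$ fails, and the multiplier is $(\lambda/c^2)(c^2/\lambda)=1$ anyway. The fixed points of absolute value $|D|^{1/2}$ are indifferent: $|H'|=1$ there, so $|(H^2)'|=1$ as well, and they do not help.

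The missing idea is a detour through the circle $|z|=\lambda^{1/2}$, which lies in $c<|z|<1$ because $c^2<\lambda$. Suppose $\lambda<\frac14$. Then $c<\lambda^{1/2}<\frac12$, and the intermediate bound gives $|B|<\lambda^{1/2}/2$. So $2Az$ dominates \eqref{tk:eq:derivative} on that circle, and $|H'|=\lambda^{1/2}/2$ there. Take $x_0$ with $H(x_0)=\zeta$ and $|x_0|=\lambda^{1/2}$, and $x_1$ with $H(x_1)=x_0$ and $|x_1|=c^2\lambda^{1/2}$. This gives
\[
 \DD(x_1,c^2\lambda^{1/2})\xrightarrow{H}\DD(x_0,\lambda^{1/2})\xrightarrow{H}\DD(\zeta,\lambda)\xrightarrow{H}\DD(0,c^2).
\]
The middle map has degree two, which Lemma~\ref{tk:lem:count} allows, and the first disk lies in $|z|<c^2$. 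The resulting fixed point of $H^3$ has multiplier norm $c^{-2}\cdot\frac{\lambda^{1/2}}{2}\cdot\frac{c^2}{\lambda}=\frac{1}{2\lambda^{1/2}}>1$, a contradiction.
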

\begin{proof}
The Newton polygon of $P$ gives one zero $\zeta$ with
absolute value $\lambda$ and two zeros with absolute value $c$.

First we prove that $|B|\leq\max\{\lambda,c/2\}$.
Choose one of the zeros $\xi$ of $P$ with $|\xi|=c$
and write $z=\xi u$.
On $|u|=1$, the reduction of $H(\xi u)/C$ is
$\bar u^2+1$, since $H(\xi)=0$.
For every $|y|<c^2$, the equation $H(z)=y$ has two roots
with absolute value $c$, and their reductions satisfy
$\bar u^2+1=0$.
In characteristic two, this implies $\bar u=1$.
Both roots therefore satisfy $|z-\xi|<c$, and $H:\DD(\xi,c)\longrightarrow\DD(0,c^2)$
is surjective of mapping degree two.

The Newton polygon of the equation $H(z)=\xi$ gives
a solution $\eta$ with $|\eta|=c\lambda$.
Lemma~\ref{tk:lem:poledisk} gives
\[
 \DD(\eta,c\lambda)
 \xrightarrow{H}\DD(\xi,c)
 \xrightarrow{H}\DD(0,c^2).
\]
The first map is bijective.
Since every point in the first disk has absolute value
$c\lambda<c^2$, Lemma~\ref{tk:lem:count} gives a point
$h$ in that disk with $H^2(h)=h$.
For $x=H(h)$, we have
\[
 |h|=c\lambda,\qquad |x|=c,\qquad
 |H'(h)|=1/\lambda.
\]
Since $|(H^2)'(h)|\leq1$, the chain rule gives
$|H'(x)|\leq\lambda$.

On $|z|=c$, \eqref{tk:eq:derivative} gives
\[
 Q(z)^2H'(z)=2Az+B+\varepsilon(z),
 \qquad |\varepsilon(z)|\leq\lambda.
\]
Evaluating at $x$, where $|Q(x)|=1$ and $|2Ax|=c/2$,
we obtain
\begin{equation}
\label{sp:eq:Bbound}
 |B|\leq\max\{\lambda,c/2\}.
\end{equation}

Next we prove that $\lambda\geq1/4$.
For distinct $z,w\in\DD(\zeta,\lambda)$, the term $D/z$
in $H$ contributes $-D/(zw)$ to the difference quotient
$(H(z)-H(w))/(z-w)$, with norm $c^2/\lambda$.
The difference quotient of $H(z)-D/z$ has norm at most
$\max\{\lambda,|B|,c^2\}$, and
\[
 \max\{\lambda,|B|,c^2\}<c<\frac{c^2}{\lambda}.
\]

Thus the pole term strictly dominates.
For every $|y|<c^2$, the equation $H(z)=y$ is equivalent to $(A-yF)z^3+(B-yE)z^2+(C-y)z+D=0,$
which has exactly one solution in $\DD(\zeta,\lambda)$.
Thus $H$ maps $\DD(\zeta,\lambda)$ bijectively
onto $\DD(0,c^2)$, and
\[
 |H(z)-H(w)|=\frac{c^2}{\lambda}|z-w|
 \qquad\text{for all }z,w\in\DD(\zeta,\lambda).
\]

Suppose that $\lambda<1/4$.
Since $c<\lambda^{1/2}<1/2$,
\eqref{sp:eq:Bbound} gives
$|B|<\lambda^{1/2}/2$.
On $|z|=\lambda^{1/2}$, the term $2Az$
in \eqref{tk:eq:derivative} has norm $\lambda^{1/2}/2$,
while the other terms have norm at most
\[
 \max\{\lambda,|B|,c^2\}<\lambda^{1/2}/2.
\]
Thus $|H'(z)|=\lambda^{1/2}/2$ on this circle.

Choose $x_0$ satisfying
\[
 H(x_0)=\zeta,\qquad |x_0|=\lambda^{1/2},
\]
and then choose $x_1$ satisfying
\[
 H(x_1)=x_0,\qquad |x_1|=c^2\lambda^{1/2}.
\]
The reduction at $|z|=\lambda^{1/2}$ and
Lemma~\ref{tk:lem:poledisk} give
\[
 \DD(x_1,c^2\lambda^{1/2})
 \xrightarrow{H}\DD(x_0,\lambda^{1/2})
 \xrightarrow{H}\DD(\zeta,\lambda)
 \xrightarrow{H}\DD(0,c^2).
\]
The middle map has degree two, and the other
maps are bijective.
Since every point in the first disk has absolute value
$c^2\lambda^{1/2}<c^2$, Lemma~\ref{tk:lem:count}
gives a fixed point $h$ of $H^3$ in that disk.
The derivative norms along its orbit are $c^{-2}$,
$\lambda^{1/2}/2$, and $c^2/\lambda$. Hence
\[
 |(H^3)'(h)|
 =c^{-2}\cdot\frac{\lambda^{1/2}}2
 \cdot\frac{c^2}{\lambda}
 =\frac1{2\lambda^{1/2}}>1,
\]
a contradiction. Thus $\lambda\geq1/4$.

Finally, $c<\lambda^{1/2}\leq2\lambda$, so $c/2<\lambda$.
Equation~\eqref{sp:eq:Bbound} gives $|B|\leq\lambda$.
\end{proof}

We now define
\[
 \phi(z)=\max\{|D|,\lambda\delta(z),\delta(z)^2\}.
\]
This function is bounded above and bounded away
from zero. It remains to prove
\[
 H^\#(z)\phi(z)\leq\phi(H(z)).
\]
Recall that in Case~1,
\[
 \lambda^3<|D|<\lambda^2,\qquad
 |C|=|D|/\lambda.
\]

For $z\in\mathcal E$, forward invariance and
Lemma~\ref{tk:lem:derivativeestimates} give
$\phi(z)=\phi(H(z))=1$ and $H^\#(z)\leq1$.
Thus the required inequality holds there.

\begin{itemize}
\item If $|z|\leq|D|$, the pole term gives
$H(z)\in\mathcal E$, $\phi(z)=|D|$, and
$H^\#(z)=|D|^{-1}$.
At zero, use $1/H(z)=zQ(z)/P(z)$.
Hence
\[
 H^\#(z)\phi(z)=1=\phi(H(z)).
\]

\item For $|D|<|z|<\lambda$,
Lemma~\ref{sp:lem:coefficientbounds} gives $|B|\leq\lambda$.
Thus the pole term strictly dominates both the value
and derivative:
\[
 |H(z)|=\frac{|D|}{|z|},\qquad
 |H'(z)|=\frac{|D|}{|z|^2}.
\]
Both $z$ and $H(z)$ lie in the open unit disk,
so $H^\#(z)=|H'(z)|$. Since
$\phi(z)=\max\{|D|,\lambda|z|\}$,
\[
 H^\#(z)\phi(z)
 =\max\{|H(z)|^2,\lambda|H(z)|\}
 =\phi(H(z)).
\]

\item For $\lambda\leq|z|\leq(|D|/\lambda)^{1/2}$,
Lemma~\ref{sp:lem:coefficientbounds} gives
$|B|\leq\lambda$ and $\lambda\geq1/4$.
The first bound implies
$|H(z)|\leq|D|/\lambda<1$, while the second gives
\[
 \frac{|z|}{2}\leq\frac{\lambda^{1/2}}2\leq\lambda.
\]
Together with $|C|=|D|/\lambda<\lambda$ and
\eqref{tk:eq:derivativebound}, these bounds imply
\[
 |H'(z)|\leq\max\{\lambda,|D|/|z|^2\}.
\]
Here $\phi(z)=|z|^2$ and $H^\#(z)=|H'(z)|$, so
\[
 H^\#(z)\phi(z)
 \leq\max\{\lambda|z|^2,|D|\}
 =|D|\leq\phi(H(z)).
\]

\item For $(|D|/\lambda)^{1/2}<|z|<1$,
Lemma~\ref{sp:lem:coefficientbounds} gives
$|B|\leq\lambda$ and $\lambda\geq1/4$.
The first bound ensures that the quadratic term
strictly dominates the value:
\[
 |H(z)|=|z|^2.
\]
The second gives
\[
 |z|/2\leq\max\{|z|^2,\lambda\}.
\]
Indeed, if $|z|\geq1/2$, then $|z|/2\leq|z|^2$;
otherwise $|z|/2<1/4\leq\lambda$.
Thus \eqref{tk:eq:derivativebound} gives
\[
 |H'(z)|\leq\max\{|z|^2,\lambda\}.
\]
Since $\phi(z)=|z|^2$ and $H^\#(z)=|H'(z)|$,
we obtain
\[
 H^\#(z)\phi(z)
 \leq\max\{|z|^4,\lambda|z|^2\}
 =\phi(H(z)).
\]
\end{itemize}

Since $|D|\leq\phi(z)\leq1$, the chain rule gives
\[
 (H^n)^\#(z)
 \leq\frac{\phi(H^n(z))}{\phi(z)}
 \leq|D|^{-1}.
\]
This proves the required bound in Case~1.

\subsection{Case 2: \texorpdfstring{$|D|\geq|C|^{3/2}$}{|D| >= |C|\string^(3/2)}}
\label{bd:sec:core}

Put $d=|D|^{1/3}$. Proposition~\ref{tk:prop:coefficientbounds}
gives
\[
 |B|\leq d,\qquad |C|\leq d^2,\qquad 0<d<1.
\]
For $0<|z|<d$, the term $D/z$ determines the absolute
value of $H(z)$. For $d<|z|<1$, the term $Az^2$ determines
it:
\[
 |H(z)|=
 \begin{cases}
  d^3/|z|,&0<|z|<d,\\
  |z|^2,&d<|z|<1.
 \end{cases}
\]
On $|z|=d$, cancellation is possible, but $|H(z)|\leq d^2$.

We will bound the derivative along each return to $|z|=d$.
When $d\geq1/4$, the bounds $|B|\leq d$ and $|C|\leq d^2$
suffice. When $d<1/4$, we also need to bound the distances
between the roots of $P$. These return estimates then give
a uniform bound on the spherical derivatives of all iterates.
\subsubsection{The additional restrictions when $d<1/4$}
\label{lc:sec:cube}

Let $R$ be the largest distance between two roots of $P$.

\begin{lemma}
\label{bd:lem:tripleroot}
\label{lc:lem:coefficientbounds}
All three roots of $P$ have absolute value $d$.
If $d<1/4$, then $R<d$ and
\begin{equation}
\label{lc:eq:cubecriterion}
 d\geq2^{-6},\qquad
 R^2\leq\min\{2d^{5/2},4d^{11/4}\}.
\end{equation}
\end{lemma}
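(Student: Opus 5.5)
First I read off the root sizes from the Newton polygon of $P$. In Case~2 we have $|A|=1$, $|B|\leq d$, $|C|\leq d^2$ and $|D|=d^3$. The points $(3,0)$, $(2,v(B))$, $(1,v(C))$, $(0,3v(d))$ therefore all lie on or above the line joining $(3,0)$ to $(0,3v(d))$. So the polygon is a single segment of slope $v(d)$, and all three roots of $P$ have absolute value $d$.

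For the remaining claims, assume $d<1/4$ and write the roots as $\zeta_1,\zeta_2,\zeta_3$, so that $P=A\prod(z-\zeta_i)$. Every inequality will be proved the same way as Proposition~\ref{tk:prop:coefficientbounds}: if it failed, we would build a chain of disk maps ending in a disk $\DD(0,r)$ whose first disk lies in $\DD(0,r)$, obtain a fixed point from Lemma~\ref{tk:lem:count}, and find a derivative product larger than one.

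For $R<d$, suppose two roots satisfy $|\zeta_i-\zeta_j|=d$. Rescale by $z=\zeta_1u$; on $|u|=1$ the reduction of $H$ is a cubic in $\bar u$ divided by $\bar u$. Some root $\zeta$ is then a simple root in its residue class, and $|H'(\zeta)|=|P'(\zeta)|/|\zeta|=d$ because $|P'(\zeta)|=d^2$. The argument of Proposition~\ref{tk:prop:coefficientbounds}(2) gives a bijection $H:\DD(\zeta,d)\to\DD(0,d^2)$. Next I solve $H(\xi)=\zeta$ with $|\xi|=d^2$, using the Newton polygon of $(A-\zeta F)z^3+\dots+(C-\zeta)z+D$ with $|C-\zeta|=d$. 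Lemma~\ref{tk:lem:poledisk} applies since $|D|/|\xi|^2=1/d>1$ and gives $\DD(\xi,d^2)\to\DD(\zeta,d)$ with derivative norm $1/d$. The resulting $H^2$ fixed point has multiplier norm $(1/d)\cdot d=1$, which is not enough. To gain, I instead pass through a point $x_0$ with $|x_0|=d^{1/2}$ and $H(x_0)=\zeta$. There $|H'|=\max\{|2|d^{1/2},d\}$, which since $d<1/4$ equals $d^{1/2}/2$ when $d^{1/2}/2>d$. Then I take $x_1$ with $H(x_1)=x_0$ and $|x_1|=d^{5/2}$, as in Lemma~\ref{sp:lem:coefficientbounds}. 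The three-step chain $\DD(x_1,\cdot)\to\DD(x_0,d^{1/2})\to\DD(\zeta,d)\to\DD(0,d^2)$ has derivative product $d^{-2}\cdot(d^{1/2}/2)\cdot d$. This product is bounded below by $\tfrac12 d^{-1/2}>1$, which is the desired contradiction. Lemma~\ref{tk:lem:refine} handles the degree-two middle step.

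For the bounds \eqref{lc:eq:cubecriterion}, all roots now lie in a single disk $\DD(\zeta,d)$. Translating by one root shows that $H$ is essentially $A(z-\zeta)^3/\zeta$ plus perturbations of size controlled by $R$. The derivative on $\DD(\zeta,d)$ involves $3(z-\zeta)^2$, cross terms with $|2|$, and terms of order $R\,|z-\zeta|$ and $R^2$. In particular, on the root cluster $|H'|\approx\max\{R^2,|2|R\,|z-\zeta|,|z-\zeta|^2\}/d$. I would repeat the chain construction with the intermediate circle chosen inside this disk, at the radius $\rho$ where the map $H:\DD(\zeta',\rho)\to\DD(0,\cdot)$ becomes finite. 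This happens at $\rho\approx R$ or at the radius where the $|2|$ term dominates. Using the pole preimage from Lemma~\ref{tk:lem:poledisk}, with derivative $d^3/|x|^2$, and possibly the $x_0$ step at $|z|=d^{1/2}$ contributing $d^{1/2}/2$, the derivative product becomes a monomial in $d$, $R$ and $2$. Requiring it to be at most one should give $R^2\leq2d^{5/2}$ from one chain and $R^2\leq4d^{11/4}$ from another, for example a chain that passes through $|z|=d^{1/4}$ or uses $H^4$. Combining $R^2\leq4d^{11/4}$ with the lower bound on $R$ forced by $|B|$, $|C|$ and $p=2$ (the discriminant or $|2|$-terms in $P'$) should yield $d\geq2^{-6}$.

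I expect the main obstacle to be finding the precise chains, together with the exact radii and degrees, that produce the exponents $5/2$ and $11/4$ and the constant $2^{-6}$. This requires verifying in each case, via the Weierstrass degree, that the two-adic cancellation in $P'$ does not destroy the mapping degrees, and that the first disk lies inside the final target disk.
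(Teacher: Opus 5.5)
Your Newton-polygon step and your proof of $R<d$ are correct, and they are the paper's argument. The chain $\DD(x_1,d^{5/2})\to\DD(x_0,d^{1/2})\to\DD(\zeta,d)\to\DD(0,d^2)$ with multiplier $1/(2d^{1/2})$ is the paper's chain with one quadratic step.

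The gap is \eqref{lc:eq:cubecriterion}, which you do not prove, and your proposed route to $d\geq2^{-6}$ cannot work. Nothing forces $R$ to be bounded below. Since $|3|=1$ when $p=2$, the numerator $P(z)=A(z-\zeta)^3$ with $|\zeta|=d$ has $|B|=d$, $|C|=d^2$, $|D|=d^3$. This is compatible with Case~2 and with Proposition~\ref{tk:prop:coefficientbounds}, yet $R=0$. So an upper bound such as $R^2\leq4d^{11/4}$ says nothing about $d$.

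Your order of deduction is also reversed. The bound $d\geq2^{-6}$ must come first, because it is what verifies the containment hypothesis of Lemma~\ref{tk:lem:count} in the chains that bound $R^2$. Finally, there is no radius at which the $|2|$ cross term of $P'$ dominates. That term is at most $\tfrac12R|z-\zeta|$, which is strictly smaller than $\max\{R^2,|z-\zeta|^2\}$ unless $R=0$ and $z=\zeta$. The useful disk is $\DD(\zeta,R)$ around a root at distance $R$ from the other two. On it, $H$ is a bijection onto $\DD(0,R^3/d)$ with $|H'|=R^2/d$.

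The missing idea is a dichotomy, for $d<2^{-6}$, on $R$ versus $d^{7/6}$.

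\emph{Case $R<d^{7/6}$.} Use the degree-three map $\DD(\zeta,d)\to\DD(0,d^2)$ given by the whole root cluster, pulled back through $|z|=d^{1/2}$ and the pole. This gives a fixed point $h$ of $H^3$ with $|h|=d^{5/2}$. At $z=H^2(h)$ one has $|P(z)|=d|h|=d^{7/2}>R^3$, so $z$ is at distance $d^{7/6}$ from all three roots. Hence $|P'(z)|=d^{7/3}$ (using $|3|=1$). Then $|H'(z)|=d^{4/3}$, because the term $H(z)(1/z+Q'(z)/Q(z))$ has norm at most $d^{3/2}$. The multiplier is $1/(2d^{1/6})>1$.

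\emph{Case $R\geq d^{7/6}$.} Pull back $\DD(\zeta,R)$ through $k$ quadratic steps at $|z|=d^{1/2},\dots,d^{2^{-k}}$, each with derivative norm $d^{2^{-j}}/2$, followed by a pole step. Whenever $d^{3-2^{-k}}<R^3/d$, this gives a fixed point of $H^{k+2}$ with multiplier $R^2/(2^kd^{3-2^{-k}})$. For $k=2$ this is at least $1/(4d^{5/12})>1$.

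\emph{The bounds on $R^2$.} Once $d\geq2^{-6}$ is known, apply the same formula with $k=1$ for $d\geq1/16$ and $k=2$ for $d<1/16$. This gives $R^2\leq2d^{5/2}$ and $R^2\leq4d^{11/4}$ respectively, and each is the minimum in its range. The bound $d\geq2^{-6}$ guarantees the containment hypothesis whenever these inequalities fail.
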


\begin{proof}
The Newton polygon gives three roots of $P$ with absolute
value $d$, so $R\leq d$.
Assume that $d<1/4$.

If $R>0$, one root $\zeta$ is at distance $R$ from both other
roots. The factorization of $P$ then gives $|P'(\zeta)|=R^2$.
For every $z\in\DD(\zeta,R)$, the distances from $z$ to
the other two roots of $P$ are both $R$, and $|zQ(z)|=d$.
Thus the linear term in the expansion of $H$ at $\zeta$
dominates, and
\[
 H:\DD(\zeta,R)\longrightarrow\DD(0,R^3/d)
 \quad\text{is bijective, with }|H'|=R^2/d.
\]

When $R>0$, we use this disk to construct periodic points.
For $d<|x|<1$, the quadratic reduction gives a surjective map
\[
 H:\DD(x,|x|)\longrightarrow\DD(H(x),|x|^2)
\]
of mapping degree two.
Indeed, for every $y\in\DD(H(x),|x|^2)$, the equation
$H(z)=y$ has two roots with absolute value $|x|$,
while its third root has smaller absolute value.
Writing $z=xu$, the reduction of $H(xu)/H(x)$ is $\bar u^2$.
Since $y/H(x)$ reduces to one, the two roots satisfy
$\bar u^2=1$.
In characteristic two this implies $\bar u=1$, so both
roots lie in $\DD(x,|x|)$.

For $k=1$ or $2$ with $d^{2^{-k}}<1/2$, pull back
$\DD(\zeta,R)$ through the disks whose points have absolute values
$d^{1/2},\ldots,d^{2^{-k}}$.
At each step, Lemma~\ref{tk:lem:refine} gives disks in the
inverse image that map surjectively onto the chosen disk.
Choose one such disk and continue taking inverse images.
Finally, Lemma~\ref{tk:lem:poledisk} gives a disk near zero
that maps bijectively onto the last chosen disk.
All points in the
first source disk have absolute value $d^{3-2^{-k}}$.
The derivative along the step near the pole has norm
$d^{-3+2^{1-k}}$.
At the steps whose input points have absolute value
$d^{2^{-j}}$, the derivative norms are $d^{2^{-j}}/2$.

Consequently, if
\[
 d^{3-2^{-k}}<R^3/d,
\]
Lemma~\ref{tk:lem:count} gives a fixed point of $H^{k+2}$
whose multiplier has norm
\begin{equation}
\label{lc:eq:returntest}
 \frac{R^2}{2^k d^{3-2^{-k}}}.
\end{equation}
If $R=d$, taking $k=1$ gives $1/(2d^{1/2})>1$.
Hence $R<d$.

Now suppose that $d<2^{-6}$.
If $R<d^{7/6}$, all three roots lie in $\DD(\zeta,d)$,
where $\zeta$ is any root, and
\[
 H:\DD(\zeta,d)\longrightarrow\DD(0,d^2)
\]
has mapping degree three.
Choose a disk whose points have absolute value $d^{1/2}$
and which maps onto $\DD(\zeta,d)$.
Then choose a disk near zero that maps bijectively onto that
disk. Lemma~\ref{tk:lem:count} gives a fixed point
$h$ of $H^3$ with $|h|=d^{5/2}$.

At $z=H^2(h)$, we have
\[
 |P(z)|=d|h|=d^{7/2}>R^3.
\]
Thus the distances from $z$ to all three roots of $P$
equal $d^{7/6}$.
Since $|3|=1$, this gives $|P'(z)|=d^{7/3}$ and
$|H'(z)|=d^{4/3}$.
In the identity
\[
 H'(z)=\frac{P'(z)}{zQ(z)}
       -H(z)\left(\frac1z+\frac{Q'(z)}{Q(z)}\right),
\]
the second term has norm at most
$|h|/d=d^{3/2}<d^{4/3}$.
The multiplier of $H^3$ at $h$ has norm
\[
 d^{-2}\cdot\frac{d^{1/2}}2\cdot d^{4/3}
 =\frac1{2d^{1/6}}>1.
\]

If $R\geq d^{7/6}$, take $k=2$ in \eqref{lc:eq:returntest}.
Here $R^3/d\geq d^{5/2}>d^{11/4}$, and the multiplier is at least
\[
 \frac{d^{7/3}}{4d^{11/4}}
 =\frac1{4d^{5/12}}>1.
\]
Both cases are impossible, so $d\geq2^{-6}$.

Finally, choose $k=1$ if $d\geq1/16$, and $k=2$ if $d<1/16$.
If $R^2>2^k d^{3-2^{-k}}$, then
\[
 \frac{R^3/d}{d^{3-2^{-k}}}
 >2^{3k/2}d^{(1-2^{-k})/2}\geq1,
\]
where the last inequality uses $d\geq2^{-6}$.
Equation~\eqref{lc:eq:returntest} would give a repelling point.
Thus
\[
 R^2\leq
 \begin{cases}
  2d^{5/2},&1/16\leq d<1/4,\\
  4d^{11/4},&2^{-6}\leq d<1/16.
 \end{cases}
\]
These inequalities give \eqref{lc:eq:cubecriterion}.
\end{proof}

\begin{lemma}
\label{lc:lem:terminalderivative}
For $|z|=d$, we have
\begin{equation}
\label{lc:eq:terminalbound}
 |H'(z)|\leq
 \max\{d^{-1/3}|H(z)|^{2/3},R^2/d\}.
\end{equation}
\end{lemma}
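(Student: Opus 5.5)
The plan is to work directly from the factorization of $P$ on the circle $|z|=d$. We bound the derivative by splitting it into a term involving $P'$ and a term involving $H$ itself. Let $r_1,r_2,r_3$ be the roots of $P$. By Lemma~\ref{bd:lem:tripleroot} they all have absolute value $d$, so $P(z)=A(z-r_1)(z-r_2)(z-r_3)$ with $|A|=1$. Fix $z$ with $|z|=d$ and put $t_i=|z-r_i|$. Since $d<1$, we have $|Q(z)|=1$ and $|Q'(z)|\leq1$. Hence $|zQ(z)|=d$ and $|P(z)|=t_1t_2t_3$, so
\[
 |H(z)|=\frac{t_1t_2t_3}{d}.
\]

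First, we use the identity already used in the proof of Lemma~\ref{bd:lem:tripleroot}:
\[
 H'(z)=\frac{P'(z)}{zQ(z)}-H(z)\left(\frac1z+\frac{Q'(z)}{Q(z)}\right).
\]
Since $1/d>1\geq|Q'(z)/Q(z)|$, the second term has norm at most $|H(z)|/d$. By the case-2 discussion, $|H(z)|\leq d^2$ on $|z|=d$. This gives $|H(z)|^{1/3}\leq d^{2/3}$, and therefore
\[
 \frac{|H(z)|}{d}\leq d^{-1/3}|H(z)|^{2/3}.
\]
So the second term is harmless.

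Next, we bound the first term. Expanding $P'$ as a sum of products of two linear factors gives
\[
 \left|\frac{P'(z)}{zQ(z)}\right|\leq\frac{\max_{i<j}t_it_j}{d}.
\]
Order the distances so that $t_1\leq t_2\leq t_3$; the maximal product is then $t_2t_3$. It suffices to show
\[
 t_2t_3\leq\max\{(t_1t_2t_3)^{2/3},R^2\},
\]
because dividing by $d$ turns the right side into $\max\{d^{-1/3}|H(z)|^{2/3},R^2/d\}$, using $(t_1t_2t_3)^{2/3}/d=d^{-1/3}(t_1t_2t_3/d)^{2/3}$.

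This inequality follows from the ultrametric inequality. We split into two cases.
\begin{enumerate}
 \item If $t_2<t_3$, then $t_3\leq\max\{t_2,|r_2-r_3|\}$ forces $t_3\leq R$. Hence $t_2t_3<R^2$.
 \item If $t_2=t_3=t$ and $t\leq R$, the bound by $R^2$ holds.
 \item If $t_2=t_3=t$ and $t>R$, then $t\leq\max\{t_1,|r_1-r_2|\}$ and $|r_1-r_2|\leq R<t$ give $t_1\geq t$. Thus $t_1=t$, and $(t_1t_2t_3)^{2/3}=t^2=t_2t_3$.
\end{enumerate}
The case $R=0$ is included, since then all $t_i$ are equal.

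I do not expect a real obstacle here. The only points needing care are the triangle-inequality case split on the distances to the roots, and checking that the correction term $H(1/z+Q'/Q)$ is dominated. That check uses the bound $|H|\leq d^2$ on the circle, which was recorded at the start of Case~2.
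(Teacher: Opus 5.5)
Your proof is correct and follows the paper's argument. You use the same identity $H'(z)=P'(z)/(zQ(z))-H(z)\bigl(1/z+Q'(z)/Q(z)\bigr)$, absorb the correction term using $|H(z)|\le d^2$, and bound $P'$ by the same ultrametric dichotomy; the only difference is bookkeeping, since the paper fixes one root $\zeta$ and splits on $|z-\zeta|>R$ versus $|z-\zeta|\le R$, whereas you order the distances $t_1\le t_2\le t_3$ and split on whether $t_2=t_3>R$.
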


\begin{proof}
Fix a root $\zeta$ of $P$. If $|z-\zeta|>R$, the distances
from $z$ to all three roots of $P$ equal $|z-\zeta|$.
Hence $|P(z)|=|z-\zeta|^3$ and
$|P'(z)|=|z-\zeta|^2$.
If $|z-\zeta|\leq R$, then
\[
 |P(z)|\leq R^3,\qquad |P'(z)|\leq R^2.
\]

Using
\[
 H'(z)=\frac{P'(z)}{zQ(z)}
       -H(z)\left(\frac1z+\frac{Q'(z)}{Q(z)}\right),
\]
we bound the first term by the maximum in
\eqref{lc:eq:terminalbound}, since $|zQ(z)|=d$.
The second term has norm at most $|H(z)|/d$, and
\[
 |H(z)|/d\leq d^{-1/3}|H(z)|^{2/3}
\]
because $|H(z)|\leq d^2$.
\end{proof}

\subsubsection{A uniform bound for the iterate derivatives}

We now treat both ranges of $d$. We estimate the derivative
product between successive visits to $|z|=d$.
We then control the portion before the first visit to this
circle and the portion after the last visit.

\begin{lemma}
\label{lc:lem:uniformderivative}
For every $z\in\PP(\Ctwo)$ and $n\geq0$,
\[
 (H^n)^\#(z)\leq d^{-4}.
\]
\end{lemma}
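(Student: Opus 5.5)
The plan is to follow an orbit $z_j=H^j(z)$ and split it at the times when it lies in the closed disk $\Dbar(0,d)$, which contains the pole and the circle $|z|=d$. I then bound the derivative product over each piece between two consecutive such times by a constant $\le 1$, and bound the pieces before the first such time and after the last one by $d^{-4}$ in total.

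First, the easy regions.
\begin{itemize}
\item On $\mathcal E$, Lemma~\ref{tk:lem:derivativeestimates} gives $H^\#\le1$, and $\mathcal E$ is forward invariant. So once the orbit lands in $\mathcal E$ it contributes a factor at most one. An orbit that enters $\mathcal E$ only does so from $|z|<d^2$ through the pole, and that jump costs $H^\#=|D|/|z|^2\cdot|z|^2=|D|^{-1}\cdot|z|\cdots$. In the worst case $|z|\le|D|$ this gives $H^\#\le d^{-3}$, and it is the terminal step.
\item On $d<|z|<1$ we have $|H(z)|=|z|^2$. Hence such an orbit moves monotonically inward, $s,s^2,s^4,\dots$, until it reaches $|z|\le d$.
\item By \eqref{tk:eq:derivativebound} with $|B|\le d$, $|C|\le d^2$ and $p=2$, on this region $|H'(z)|\le\max\{|z|/2,|z|^2,d\}$. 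Note that $d^3/|z|^2<d$ there.
\item On $0<|z|<d$, the pole term gives $|H(z)|=d^3/|z|$. The derivative $|H'(z)|$ is at most $d^3/|z|^2$, with equality when $|z|<d$ strictly, since then the other terms are at most $\max\{|z|/2,d\}$.
\end{itemize}

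Next, a single excursion. Start at $w$ with $|w|=d$ and put $t=|H(w)|\le d^2$.
\begin{itemize}
\item If $t<d^3$, the next point is in $\mathcal E$; this is the terminal case above.
\item Otherwise the pole step sends $H(w)$ to a point of norm $s=d^3/t\in[d,1)$, with derivative $d^3/t^2$.
\item Then come the squaring steps $s,\dots,s^{2^{k-1}}$ until the orbit returns to $\Dbar(0,d)$.
\end{itemize}
The circle step contributes at most $\max\{d^{-1/3}t^{2/3},R^2/d\}$ by Lemma~\ref{lc:lem:terminalderivative}. I would multiply the three contributions and show that the product is at most one. The case $d\ge1/4$ uses only $|B|\le d$ and $|C|\le d^2$, together with the crude bound $|z|/2\le 2|z|^2$ on the relevant range. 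The case $d<1/4$ uses the restrictions $d\ge2^{-6}$ and $R^2\le\min\{2d^{5/2},4d^{11/4}\}$ from Lemma~\ref{lc:lem:coefficientbounds}.

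The key bookkeeping compares each factor with a ratio of $\delta$-like weights, in the spirit of Case~1. I would use the weight $\max\{d^3/|z|,\ldots\}$ for points inside the disk and $|z|^{2}$ for points outside, so that the product telescopes to a ratio bounded by a power of $d$. When the return is to a point with $|z|<d$ rather than exactly to the circle, the next pole step is handled in the same way.

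Finally, combine the pieces.
\begin{itemize}
\item The prefix, from an arbitrary $z$ until the first entry into $\Dbar(0,d)$, consists of $\mathcal E$-steps and squaring steps. Its product is bounded by $\prod|z_i|/2$-type factors. Since $|H(z)|=|z|^2$ there, this product telescopes to at most a bounded power of $d^{-1}$, and I expect $d^{-1}$ suffices.
\item The suffix is a partial excursion, bounded by $d^{-3}$ from the pole step.
\end{itemize}
Together these give $d^{-4}$. The spherical and absolute derivatives agree on the open unit disk, so no correction factors appear except at the escape to $\mathcal E$.

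The main obstacle will be the single-excursion estimate when $d<1/4$. There the factors $|z|/2$ of the squaring steps exceed $|z|^2$, so the squaring phase genuinely expands. This expansion must be absorbed by the contraction $d^{-1/3}t^{2/3}$ or $R^2/d$ at the circle step, and this is exactly where \eqref{lc:eq:cubecriterion} enters. I would first try the explicit count: the squaring phase contributes at most $2^{-k}s^{2^k-1}$ times a factor from the $d$-alternative, against $d^3/t^2\cdot\max\{d^{-1/3}t^{2/3},R^2/d\}$. I would then check the two subranges $1/16\le d<1/4$ and $2^{-6}\le d<1/16$ separately, with $k\le2$ relevant since $d^{1/4}\ge 1/2$ there.
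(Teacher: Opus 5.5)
Your core estimate matches the paper: a complete return from $|z|=d$ to $|z|=d$ (circle step, pole step of norm $d^3/t^2$, then $k$ squaring steps) has derivative product at most one, by Lemma~\ref{lc:lem:terminalderivative} and, when $d<1/4$, by \eqref{lc:eq:cubecriterion}. The gap is in how you assemble the orbit.

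Splitting at all times in $\Dbar(0,d)$ does not give pieces of product at most one. A point with $|z|=d^2$ goes to the circle with $|H'(z)|=d^{-1}$. A point with $d^2<|z|<d^{3/2}$ goes into $d^{3/2}<|z|<d$ with $|H'(z)|=d^3/|z|^2>1$. Your excursion analysis in fact splits at the circle, as the paper does. But then you must use that the annulus $d^2<|z|<d$ is invariant, since $|H(z)|=d^3/|z|$ there. An excursion landing in it never returns to the circle and is followed only by pole steps inside the annulus, with no further squaring phase, so it is not ``handled in the same way.'' Also, the segment before the first visit to the circle may start at a point with $0<|z|<d$. For instance, if $|z|$ is slightly above $d^3$, the pole step alone has $|H'|$ close to $d^{-3}$. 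Your prefix, described as $\mathcal E$-steps and squaring steps, does not cover this, and your count $d^{-1}\cdot d^{-3}$ leaves no room for it.

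The missing ingredient is a single weight carried along the whole orbit. The weights you suggest cannot serve. If $d<1/4$ and $d^{1/2}<|z|<1/2$, then $|H(z)|=|z|^2>d$ while $|H'(z)|=|z|/2>|z|^2$, so the weight $|z|^2$ outside $\Dbar(0,d)$ violates the one-step inequality. The paper uses $\phi_0(z)=\max\{d^3,\delta(z)\}$:
\begin{itemize}
\item Off the circle, $H^\#\phi_0\le\phi_0\circ H$ needs only $|H'(z)|\le\max\{d,|z|/2,|z|^2\}\le|z|$ on $d<|z|<1$. It holds with equality on $|z|<d$, which covers the invariant annulus and the escape to $\mathcal E$.
\item On the circle, $|H'|\le d$ and $\phi_0\circ H\ge d^3$ bound the one-step loss by $d^{-1}$.
\item $\phi_0=d$ at both ends of a complete return, where your excursion bound applies.
\end{itemize}
Hence only the last visit to the circle costs $d^{-1}$. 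This gives $(H^n)^\#(z)\,\phi_0(z)/\phi_0(H^n(z))\le d^{-1}$, and $d^3\le\phi_0\le1$ yields $d^{-4}$.

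Two details of your excursion estimate also need repair. For $d\ge1/4$, the bound $|z|/2\le2|z|^2$ loses a factor two per squaring step, hence $2^k$ over a return. You need $|z|/2\le|z|^2$, which holds since these points satisfy $|z|=d^{2^{-j}}\ge d^{1/2}\ge1/2$. For $d<1/4$, only the squaring steps starting at absolute values $d^{1/2}$ and $d^{1/4}$ can have $|z|<1/2$, because $d\ge2^{-6}$ gives $d^{1/8}>1/2$. The inequality $d^{1/4}\ge1/2$ that you cite fails for $d<1/16$.
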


\begin{proof}
Put $\phi_0(z)=\max\{d^3,\delta(z)\}$.
Away from $|z|=d$, we have
\begin{equation}
\label{lc:eq:offcircle}
 H^\#(z)\phi_0(z)\leq\phi_0(H(z)).
\end{equation}
For $|z|<d$, this follows from the formulas near the pole,
including zero in the coordinate $1/H$.
For $d<|z|<1$, it follows from $|H(z)|=|z|^2$ and
\[
 |H'(z)|\leq\max\{d,|z|/2,|z|^2\}\leq|z|.
\]
On $\mathcal E$, use
Lemma~\ref{tk:lem:derivativeestimates}.
If $|H^j(w)|\ne d$ for $0\leq j<m$, multiplying
\eqref{lc:eq:offcircle} gives $(H^m)^\#(w)\leq d^{-3}$.
On the circle itself,
\begin{equation}
\label{lc:eq:circleloss}
 |H'(z)|\leq d.
\end{equation}
Since $\phi_0(z)=d$ and $\phi_0(H(z))\geq d^3$ on this
circle, we also have
\[
 H^\#(z)\frac{\phi_0(z)}{\phi_0(H(z))}\leq d^{-1}.
\]

Let $|z|=d$, and suppose that $N$ is the first return time:
\[
 |H^N(z)|=d,\qquad |H^j(z)|\ne d
 \quad(1\leq j<N).
\]
We prove that $(H^N)^\#(z)\leq1$.
Since $|z|=|H^N(z)|=d<1$,
\[
 (H^N)^\#(z)=|(H^N)'(z)|.
\]
For $0<|z|<d$, the image has absolute value $d^3/|z|$;
for $d<|z|<1$, it has absolute value $|z|^2$.
These formulas imply that $N=k+2$ for some $k\geq0$,
with successive absolute values
\[
 d\longmapsto d^{3-2^{-k}}\longmapsto d^{2^{-k}}
 \longmapsto\cdots\longmapsto d,\qquad k\geq0.
\]
An orbit entering $\mathcal E$ or the invariant
annulus $d^2<|z|<d$ cannot return.
There are $k$ steps whose input points satisfy $d<|z|<1$.
When $k=0$, the step from absolute value $d^2$ to absolute
value $d$ already returns to the circle.

At a step beginning at a point $w$ with $|w|=d^{2^{-j}}$,
\eqref{tk:eq:derivativebound} gives
\[
 |H'(w)|\frac{|w|}{|H(w)|}
 \leq\max\{1/2,d^{2^{-j}}\}.
\]
At a step beginning at a point $w$ with $0<|w|<d$,
we have $|H'(w)|=|H(w)|/|w|$.

If $d\geq1/4$, all the absolute values $d^{2^{-j}}$,
for $1\leq j\leq k$, are at least $1/2$.
Using \eqref{lc:eq:circleloss}, we obtain
\[
 (H^{k+2})^\#(z)
 \leq d^{-1+2^{-k}}\prod_{j=1}^{k}d^{2^{-j}}
 =1.
\]
An empty product equals one.

If $d<1/4$, the derivative estimate \eqref{lc:eq:terminalbound}
on $|z|=d$ gives
\begin{equation}
\label{lc:eq:firstreturnbound}
 (H^{k+2})^\#(z)\leq
 \left(\prod_{j=1}^{k}\max\{1/2,d^{2^{-j}}\}\right)
 \max\{d^{-(1-2^{-k})/3},R^2d^{-3+2^{-k}}\}.
\end{equation}
For $k=0$, this is at most $\max\{1,R^2/d^2\}=1$.
For $k=1$, it is at most
\[
 \max\left\{\frac1{2d^{1/6}},
             \frac{R^2}{2d^{5/2}}\right\}\leq1
\]
by \eqref{lc:eq:cubecriterion}.
For $k\geq2$, the lower bound $d\geq2^{-6}$ gives
\[
 \prod_{j=1}^{k}\max\{1/2,d^{2^{-j}}\}
 =\max\{1/4,d^{1/4}/2\}\,d^{1/4-2^{-k}}.
\]
Substitution into \eqref{lc:eq:firstreturnbound} gives
\[
 (H^{k+2})^\#(z)
 \leq\max\left\{
 \frac1{4d^{1/4}},\frac12,
 \frac{R^2}{4d^{11/4}},\frac{R^2}{2d^{5/2}}
 \right\}\leq1.
\]
Thus the derivative product along every complete return is at most one.

We now split a finite orbit
$z,H(z),\ldots,H^n(z)$ at its visits to $|z|=d$.
Before the first visit, every input point is off the circle.
More generally, if $|H^j(w)|\ne d$ for $0\leq j<m$,
multiplying \eqref{lc:eq:offcircle} gives
\[
 (H^m)^\#(w)\frac{\phi_0(w)}{\phi_0(H^m(w))}\leq1.
\]
If $N$ is the first return time of $w$, then
$\phi_0(w)=\phi_0(H^N(w))=d$ and $(H^N)^\#(w)\leq1$.
Therefore
\[
 (H^N)^\#(w)\frac{\phi_0(w)}{\phi_0(H^N(w))}\leq1.
\]

After the last visit, the starting point is the only input
point on the circle. At that step,
$H^\#(w)\phi_0(w)/\phi_0(H(w))\leq d^{-1}$.
At every later step, \eqref{lc:eq:offcircle} bounds this
expression by one.
An empty portion contributes the product one.
If the orbit never visits the circle, \eqref{lc:eq:offcircle}
applies at every step.
Thus the chain rule gives
\[
 (H^n)^\#(z)\frac{\phi_0(z)}{\phi_0(H^n(z))}\leq d^{-1}.
\]
Since $d^3\leq\phi_0\leq1$, we conclude that
\[
 (H^n)^\#(z)\leq d^{-4}
 \qquad\text{for all }z\in\PP(\Ctwo)\text{ and }n\geq0.
\]
\end{proof}

This proves the required bound in Case~2.

\subsection{Case 3: \texorpdfstring{$|D|=|C|^2$}{|D| = |C|\string^2}}
\label{sec:equality-start}

Put $c=|C|^{1/2}$. Then
\[
 |C|=c^2,\qquad |D|=c^4,\qquad
 0<c<1,\qquad |B|<c.
\]
Recall that the numerator of $H$ is
\[
 P(z)=Az^3+Bz^2+Cz+D.
\]
The Newton polygon of $P$ shows that $P$ has one zero
$\gamma$ with absolute value $c^2$ and two zeros
$\zeta_1,\zeta_2$ with absolute value $c$.
Throughout this case, $\gamma,\zeta_1,\zeta_2$ denote these
zeros of $P$. They are also zeros of $H$, since the denominator
$zQ(z)$ does not vanish at any of them.

Away from the circles $|z|=c^2$ and $|z|=c$,
\[
 |H(z)|=
 \begin{cases}
  c^4/|z|,&0<|z|<c^2,\\
  c^2,&c^2<|z|<c,\\
  |z|^2,&c<|z|<1.
 \end{cases}
\]
We first obtain a further restriction on the coefficient $B$.
We then show that the first restriction is sufficient to apply
the construction from Case~1 and obtain the uniform derivative
bound for $c\geq1/2$.

For $c<1/2$, we need additional restrictions on $c$ and on
the locations of the roots. In this range, we will bound the
distance $|\zeta_1-\zeta_2|$, identify the disk
$\DD(\gamma,c^2)$ with $\DD(C,c^2)$, and bound $|C-\gamma|$.
These results will allow us to construct a function $\phi$
bounded above and bounded away from zero, and prove the
uniform derivative bound.

\subsubsection{The restriction on $B$}

\begin{lemma}
\label{bd:lem:equalityB}
\label{bd:lem:plateaugeometry}
\label{mx:lem:disks}
We have
\[
 \begin{cases}
  |B|\leq c^2,&c\geq1/2,\\
  |B|=c/2,&c<1/2.
 \end{cases}
\]
\end{lemma}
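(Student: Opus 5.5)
The plan is to run the same two-step return-map argument as in the first part of the proof of Lemma~\ref{sp:lem:coefficientbounds}, now with the Case~3 data $|C|=c^2$ and $|D|=c^4$. That argument produces a repelling-or-not periodic orbit passing through the circle $|z|=c$. The no-repelling hypothesis then forces $|H'|\leq c^2$ at one point of that circle. Evaluating the derivative formula \eqref{tk:eq:derivative} there then pins down $|B|$ against the $2$-adic term $|2Az|=c/2$.

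\emph{Step 1: a degree-two disk at $|z|=c$.} Fix a root $\xi\in\{\zeta_1,\zeta_2\}$ of $P$, so $|\xi|=c$, and write $z=\xi u$ with $|u|=1$. In $P(\xi u)$ the terms $A\xi^3u^3$ and $C\xi u$ have norm $c^3$. The term $B\xi^2u^2$ has norm $<c^3$ because $|B|<c$. The constant $D$ has norm $c^4<c^3$. Hence the reduction of $H(\xi u)/C$ is a nonzero multiple of $\bar u^2+1$, normalized via $H(\xi)=0$. For $|y|<c^2$, the Newton polygon of $(A-yF)z^3+(B-yE)z^2+(C-y)z+D$ has two roots of absolute value $c$, and their reductions satisfy $\bar u^2+1=0$. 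In characteristic two this forces $\bar u=1$. Therefore $H:\DD(\xi,c)\to\DD(0,c^2)$ is surjective of mapping degree two.

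\emph{Step 2: a pole disk and a periodic point.} Solve $H(z)=\xi$, that is,
\[
(A-\xi F)z^3+(B-\xi E)z^2+(C-\xi)z+D=0 .
\]
Since $|C-\xi|=c$ and $|D|=c^4$, the Newton segment joining the constant and linear coefficients gives a root $\eta$ with $|\eta|=c^3$. Then $|D|/|\eta|^2=c^{-2}>1$, so Lemma~\ref{tk:lem:poledisk} gives a bijection $\DD(\eta,c^3)\to\DD(\xi,c)$ with derivative norm $c^{-2}$. Composing with Step~1 gives
\[
\DD(\eta,c^3)\xrightarrow{H}\DD(\xi,c)\xrightarrow{H}\DD(0,c^2).
\]
Every point of the first disk has absolute value $c^3<c^2$, so Lemma~\ref{tk:lem:count} gives $h$ with $H^2(h)=h$. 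Put $x=H(h)$, so $|x|=c$. Since $|(H^2)'(h)|\leq 1$ and $|H'(h)|=c^{-2}$, we get $|H'(x)|\leq c^2$.

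\emph{Step 3: reading off $B$.} On $|z|=c$ we have $|Q(z)|=1$. In \eqref{tk:eq:derivative}, the terms $AEz^2$, $BFz^2$, $C(E+2Fz)$ and $D(z^{-2}+2Ez^{-1}+3F)$ all have norm at most $c^2$, since $|D|/c^2=c^2$ and $|D|/c=c^3$. Thus $Q(x)^2H'(x)=2Ax+B+\varepsilon$ with $|\varepsilon|\leq c^2$, and Step~2 gives $|2Ax+B|\leq c^2$, where $|2Ax|=c/2$.
\begin{itemize}
\item If $c\geq1/2$, then $c/2\leq c^2$, so $|B|\leq c^2$.
\item If $c<1/2$, then $c/2>c^2$, so $B$ must cancel the dominant term $2Ax$ to within $c^2$, which forces $|B|=c/2$.
\end{itemize}

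The step needing the most care is Step~1: showing that both preimages lie in the same residue disk $\DD(\xi,c)$. This is precisely where $p=2$ and the strict inequality $|B|<c$ from Proposition~\ref{tk:prop:coefficientbounds}(3) enter. The hypothesis of part~(3) holds in Case~3 because $|D|^{2/3}=c^{8/3}<c^2=|C|$. The remaining steps are routine Newton-polygon and ultrametric bookkeeping.
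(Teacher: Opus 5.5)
Your proposal is correct and follows the paper's proof essentially step for step. You use the reduction $\bar u^2+1$ in characteristic two to get the degree-two map $\DD(\xi,c)\to\DD(0,c^2)$, then the pole disk of radius $c^3$ from Lemma~\ref{tk:lem:poledisk}, and then a fixed point of $H^2$ from Lemma~\ref{tk:lem:count}, which forces $|H'(x)|\leq c^2$ on $|x|=c$, and finally you compare $B$ with $2Ax$ in \eqref{tk:eq:derivative}. The only detail you leave implicit, which the paper states explicitly, is that the same reduction also shows $H$ sends $\DD(\xi,c)$ into $\DD(0,c^2)$.
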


\begin{proof}
For every $|y|<c^2$, clearing denominators in the equation
$H(z)=y$ gives
\begin{equation}
\label{bd:eq:fiber}
 (A-yF)z^3+(B-yE)z^2+(C-y)z+D=0.
\end{equation}
The polynomial on the left has one root with absolute value
$c^2$ and two roots with absolute value $c$.
To locate the latter two roots, write $z=\zeta_1u$.
Since $H(\zeta_1)=0$, the coefficient $A\zeta_1^2/C$
reduces to one. On $|u|=1$, the reduction of
$H(\zeta_1u)/C$ is therefore $\bar u^2+1$.
For $|y|<c^2$, reducing the equation $H(\zeta_1u)=y$ gives
\[
 \bar u^2+1=0.
\]
In characteristic two this implies $\bar u=1$, or
$|z-\zeta_1|<c$.
Thus, for every $|y|<c^2$, the equation $H(z)=y$ has
two roots in $\DD(\zeta_1,c)$.
The same reduction shows that $H$ sends this disk into
$\DD(0,c^2)$.
Consequently,
\[
 H:\DD(\zeta_1,c)\longrightarrow\DD(0,c^2)
\]
is surjective of mapping degree two.

For the equation $H(z)=\zeta_1$, we have $|C-\zeta_1|=c$.
The Newton polygon segment joining the constant and linear
coefficients of the polynomial on the left of
\eqref{bd:eq:fiber}, with $y=\zeta_1$, therefore gives a root
$\xi$ with $|\xi|=c^3$.
Since $c^4>c^6$, Lemma~\ref{tk:lem:poledisk} gives a bijection
\[
 H:\DD(\xi,c^3)\longrightarrow\DD(\zeta_1,c)
\]
with constant derivative norm $c^{-2}$.
Consequently,
\[
 \DD(\xi,c^3)
 \xrightarrow{H}\DD(\zeta_1,c)
 \xrightarrow{H}\DD(0,c^2).
\]
Since $c^3<c^2$, Lemma~\ref{tk:lem:count} gives a fixed point
$h$ of $H^2$ in the first disk. Put $x=H(h)$. Then
\[
 |h|=c^3,\qquad |x|=c,\qquad |H'(h)|=c^{-2}.
\]
The absence of repelling periodic points implies
$|H'(x)|\leq c^2$.

On $|z|=c$, \eqref{tk:eq:derivative} gives
\[
 Q(z)^2H'(z)=2Az+B+\varepsilon(z),
 \qquad |\varepsilon(z)|\leq c^2.
\]
Here $|Q(x)|=1$ and $|2Ax|=c/2$.
If $c\geq1/2$, then $c/2\leq c^2$, so $|B|>c^2$
would force $|H'(x)|=|B|>c^2$, a contradiction.
If $c<1/2$, then $c/2>c^2$. Unless $|B|=c/2$,
one of $2Ax$ and $B$ strictly dominates, giving
$|H'(x)|=\max\{c/2,|B|\}>c^2$, again a contradiction.
\end{proof}

Suppose now that $c\geq1/2$. The restriction $|B|\leq c^2$
is sufficient to obtain the uniform derivative bound using
the construction from Case~1. Define
\[
 \phi(z)=\max\{c^4,c^2\delta(z),\delta(z)^2\}.
\]
The verification in Section~\ref{sp:sec:plateau}
applies with $\lambda=c^2$:
\[
 \lambda\geq1/4,\qquad |B|\leq\lambda,\qquad
 \lambda^3<|D|=\lambda^2,\qquad |C|=|D|/\lambda.
\]
The four regional estimates remain valid when $|D|=\lambda^2$
and give $H^\#\phi\leq\phi\circ H$.
Since $c^4\leq\phi\leq1$, the chain rule yields
\[
 (H^n)^\#(z)
 \leq\frac{\phi(H^n(z))}{\phi(z)}
 \leq c^{-4}=|D|^{-1}.
\]
It remains to prove this bound when $c<1/2$.

\subsubsection{Additional restrictions when $c<1/2$}

Assume henceforth that $c<1/2$, so $|B|=c/2$.
To control the derivative near $\zeta_1$ and $\zeta_2$, put
\[
 R=|\zeta_1-\zeta_2|.
\]
We will prove that $c\geq2^{-3/2}$ and $R\leq c^2$,
and that $\DD(C,c^2)=\DD(\gamma,c^2)$ with
$|C-\gamma|\leq4c^4$.
We first bound $|\zeta_1-\zeta_2|$. We then
identify the disk containing $C$ and obtain the quantitative
bound on its distance from $\gamma$.
On $|z|=c$, write
\begin{equation}
\label{mx:eq:factor}
 H(z)=\frac{A(z-\gamma)}{zQ(z)}g(z),
 \qquad g(z)=(z-\zeta_1)(z-\zeta_2).
\end{equation}
On this circle, the factor $A(z-\gamma)/(zQ(z))$
has norm one, and its derivative has norm at most one.
Differentiating the product gives
\[
 H'(z)=\frac{A(z-\gamma)}{zQ(z)}g'(z)
       +\left(\frac{A(z-\gamma)}{zQ(z)}\right)'g(z).
\]

\begin{lemma}
\label{bd:thm:smallpole}
We have $c\geq1/4$ and $R\leq c^2$.
\end{lemma}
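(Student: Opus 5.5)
The plan mirrors the return-map arguments of Lemma~\ref{sp:lem:coefficientbounds} and Lemma~\ref{lc:lem:coefficientbounds}. Assume a bound fails, build a chain of disk maps whose first disk sits near the pole at zero and whose final target is $\DD(0,c^2)$ or $\DD(0,R^2)$, get a fixed point of an iterate from Lemma~\ref{tk:lem:count}, and show its multiplier has norm greater than one. We work throughout on $|z|=c$ with the factorization \eqref{mx:eq:factor}. Since the prefactor $A(z-\gamma)/(zQ(z))$ has norm one and its derivative has norm at most one, we have
\[
 |H'(z)|\leq\max\{|g'(z)|,|g(z)|\},\qquad g'(z)=2z-(\zeta_1+\zeta_2),
\]
with equality $|H'(z)|=|g'(z)|$ whenever $|g'(z)|>|g(z)|$.

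From $\gamma+\zeta_1+\zeta_2=-B/A$, $|B|=c/2$ and $|\gamma|=c^2<c/2$, we get $|\zeta_1+\zeta_2|=c/2$. This is the arithmetic input specific to $p=2$. In particular, at $\zeta_1$ we have $|H'(\zeta_1)|=|\zeta_1-\zeta_2|=R$. On $\DD(\zeta_1,R)$ the linear term dominates, so
\[
 H:\DD(\zeta_1,R)\longrightarrow\DD(0,R^2)
\]
is bijective with constant derivative norm $R$. This is the same computation as for $\zeta$ in the proof of Lemma~\ref{lc:lem:coefficientbounds}, using $|g|\leq R^2$ there.

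\emph{Step 1: $R\leq c^2$ when $R$ is large.} From the proof of Lemma~\ref{bd:lem:equalityB}, the pole disk $\DD(\xi,c^3)$ maps bijectively onto $\DD(\zeta_1,c)$ with derivative norm $c^{-2}$. Pulling $\DD(\zeta_1,R)$ back through this map gives a disk of radius $Rc^2$ whose points have absolute value $c^3$. If $c^3<R^2$, Lemma~\ref{tk:lem:count} gives a fixed point of $H^2$ with multiplier of norm $c^{-2}R$. This exceeds one exactly when $R>c^2$, which rules out $R>c^{3/2}$.

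\emph{Step 2: the range $c^2<R\leq c^{3/2}$.} Here the count condition fails, and we lengthen the chain through the circle $|z|=c^{1/2}$, as in \eqref{lc:eq:returntest}. On that circle $|H|=c$, and the map is a degree-two disk map by the characteristic-two reduction $\bar u^2=1$. Its derivative norm is $\max\{c^{1/2}/2,c\}$. Pull $\DD(\zeta_1,R)$ back through a disk at $|z|=c^{1/2}$ using Lemma~\ref{tk:lem:refine}, and then through the pole via Lemma~\ref{tk:lem:poledisk}, whose derivative norm is $c^{-3}$. The first disk then has points of absolute value $c^{7/2}$. The fixed point of $H^3$ has multiplier of norm
\[
 R\,c^{-3}\max\{c^{1/2}/2,c\}.
\]
If $R\geq c^{7/4}$, the count condition $c^{7/2}<R^2$ holds, and this multiplier exceeds one because $c<1/2$. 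If necessary, we add a further step through $|z|=c^{1/4}$ to cover $c^2<R<c^{7/4}$. Combining these ranges gives $R\leq c^2$.

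\emph{Step 3: $c\geq1/4$.} Suppose $c<1/4$. Then $c^{1/2}/2>c$, so the derivative on $|z|=c^{1/2}$ is exactly $c^{1/2}/2$. Since $R\leq c^2$, the roots $\zeta_1,\zeta_2$ are close together. I would copy the $\lambda\geq1/4$ argument of Lemma~\ref{sp:lem:coefficientbounds}, using the target disk $\DD(\gamma,c^2)$. On this disk the pole term $D/z$ has derivative norm $c^4/c^4=1$ and dominates, so $H:\DD(\gamma,c^2)\to\DD(0,c^2)$ is bijective with derivative norm one. The chain is: the pole disk, then the $c^{1/2}$ disk, then a disk on $|z|=c$ near $\zeta_1$ chosen so that its image meets $\DD(\gamma,c^2)$, then $\DD(\gamma,c^2)$. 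Alternatively, target $\DD(\zeta_1,c^2)$, where $|H'|$ is controlled by the estimate above. The resulting multiplier should have norm of order $1/(2c^{1/2})>1$.

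The main obstacle is Step~2 and the middle link in Step~3. The map on $\DD(\zeta_1,c)$ has degree two, so its derivative is not constant, and the multiplier estimate needs the fixed point to land where $|g'(z)|$ is known. I would first try to resolve this by always refining the final target to $\DD(\zeta_1,R)$, or to a disk of radius at most $R$, on which the map is bijective with derivative norm $R$. I would then check that the count condition $\sup|z|<$ target radius can still be met by adding enough intermediate circles $|z|=c^{2^{-j}}$.
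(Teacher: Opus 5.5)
\paragraph{Steps 2 and 3 do not close.} Your Step~1 is correct, but more circles cannot repair Step~2.

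Take any return chain ending with $H:\DD(\zeta_1,R)\to\DD(0,R^2)$. Lemma~\ref{tk:lem:count} forces it to start inside $\DD(0,R^2)$. If it passes through the circles $|z|=c^{2^{-j}}$, $1\le j\le k$, its first disk lies on $|z|=c^{4-2^{-k}}$. So you need $R>c^{2-2^{-k-1}}$, which forces $k\to\infty$ as $R\downarrow c^2$.

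On $|z|=s$, however, the only lower bound for $|H'|$ is $s/2$, and it is available only when $s<1/2$ (then $2Az$ dominates in \eqref{tk:eq:derivative}). Your value $\max\{c^{1/2}/2,c\}$ is only an upper bound, and for $s\ge1/2$ nothing bounds $|H'|$ from below. The best multiplier you can certify is $R\,c^{-3+2^{-k}}2^{-k}$. For $R$ near $c^2$ this is about $c^{-1+2^{-k}}2^{-k}$, which falls below one as $k$ grows.

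The failure already occurs for $c>1/4$ and $k=1$. If $E=0$, then $|H'|=c^{1/2}/2$ on $|z|=c^{1/2}$, and your $H^3$ multiplier $Rc^{-5/2}/2$ is at most one for $c^2<R\le 2c^{5/2}$.

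Step~3 fails as well. $H$ maps $\DD(\zeta_1,c)$ onto $\DD(0,c^2)$, which is disjoint from $\DD(\gamma,c^2)$ because $|\gamma|=c^2$. So no disk near $\zeta_1$ maps into $\DD(\gamma,c^2)$. A preimage of $\DD(\gamma,c^2)$ on $|z|=c$ exists only if $\overline{\gamma/C}\ne1$, and Lemma~\ref{bd:lem:centralrestrictions} later shows this is false. The alternative target $\DD(\zeta_1,c^2)$ has image $\DD(0,c^4)$. It is useless, because points of $\DD(0,c^4)$ are sent into the invariant set $\mathcal E$ and never return.

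\paragraph{The missing idea.} Extract information from the non-repelling $2$-cycle you already have, rather than building new cycles aimed at $\DD(\zeta_1,R)$.

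Lemma~\ref{bd:lem:equalityB} gives $h$ with $H^2(h)=h$ and $|h|=c^3$, and $x=H(h)\in\DD(\zeta_1,c)$ with $|H'(x)|\le c^2$. By \eqref{mx:eq:factor}, $|g(x)|=c^3$ and $|g'(x)|\le c^2$, where $g'(x)=2(x-\zeta_1)+(\zeta_1-\zeta_2)$.

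Suppose $|x-\zeta_1|\le R$. Then $R=|g'(x)|\le c^2$, so $|g(x)|\le R^2\le c^4<c^3$, a contradiction. Hence $|x-\zeta_1|=|x-\zeta_2|=c^{3/2}$ and $|2(x-\zeta_1)|=c^{3/2}/2$.

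If $c\ge1/4$, this is at most $c^2$, so $R\le\max\{|g'(x)|,c^{3/2}/2\}\le c^2$ follows at once.

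If $c<1/4$, then $c^{3/2}/2>c^2$, so the bound $|g'(x)|\le c^2$ forces cancellation and $R=c^{3/2}/2$: the roots are far apart, not close. Now the period-three chain $\DD(x_1,c^{7/2})\to\DD(x_0,c^{1/2})\to\DD(\zeta_1,c)\to\DD(0,c^2)$ works, using $|H'|=c^{1/2}/2$ on $|z|=c^{1/2}$, valid since $c<1/4$. Put $v=H^2(h)$ and $m=(\zeta_1+\zeta_2)/2$. Then $|g(v)|=c^{7/2}<c^3=|(\zeta_1-\zeta_2)/2|^2$. This gives $|v-m|=c^{3/2}$ and $|H'(v)|=|2(v-m)|=c^{3/2}/2$. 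The multiplier is therefore $c^{-3}\cdot\frac{c^{1/2}}{2}\cdot\frac{c^{3/2}}{2}=\frac{1}{4c}>1$, a contradiction.

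So the logical order is the reverse of yours: $c\ge1/4$ comes first, and $R\le c^2$ then follows from the same $2$-cycle.
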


\begin{proof}
Let $h$ be a fixed point of $H^2$ constructed in the proof of
Lemma~\ref{bd:lem:equalityB}, and put $x=H(h)$. Then
\[
 |g(x)|=c^3,\qquad |g'(x)|\leq c^2.
\]
The second inequality follows from
$|H'(x)|\leq c^2$ and \eqref{mx:eq:factor}.

If $|x-\zeta_1|\leq R$, then
\[
 |g'(x)|
 =|2(x-\zeta_1)+(\zeta_1-\zeta_2)|
 =R.
\]
This would give $R\leq c^2$ and
$|g(x)|\leq c^4$, a contradiction.
Thus
\[
 |x-\zeta_1|=|x-\zeta_2|=c^{3/2}.
\]

Suppose that $c<1/4$.
Since $c^{3/2}/2>c^2$, the derivative bound
forces $R=c^{3/2}/2$.
For $m=(\zeta_1+\zeta_2)/2$,
\[
 g(z)=(z-m)^2-\frac{(\zeta_1-\zeta_2)^2}{4},
 \qquad |g(m)|=c^3.
\]
Moreover, $|m-\zeta_1|=2R=c^{3/2}<c$,
so $\DD(m,c)=\DD(\zeta_1,c)$.

For $c<s<1$, let $a$ satisfy $|a|=s^2$.
For every $y\in\DD(a,s^2)$, the equation $H(z)=y$
has two roots with absolute value $s$.
Choose a solution $b$ of $H(b)=a$ with $|b|=s$
and write $z=bu$.
On $|u|=1$, the reduction of $H(bu)/a$ is $\bar u^2$,
while $y/a$ reduces to one.
The two roots therefore satisfy $\bar u^2=1$.
In characteristic two this implies $\bar u=1$, so both
roots lie in $\DD(b,s)$.
Thus $H$ maps $\DD(b,s)$ onto $\DD(a,s^2)$
with mapping degree two.
Apply this with $s=c^{1/2}$ and $a=\zeta_1$.
Then choose a disk near zero that $H$ maps bijectively
onto the resulting disk $\DD(x_0,c^{1/2})$. This gives
\[
 \DD(x_1,c^{7/2})
 \xrightarrow{H}\DD(x_0,c^{1/2})
 \xrightarrow{H}\DD(\zeta_1,c)
 \xrightarrow{H}\DD(0,c^2),
 \qquad |x_1|=c^{7/2}<c^2.
\]
Lemma~\ref{tk:lem:count} gives a fixed point
$h$ of $H^3$ in the first disk.
For $v=H^2(h)$, we have $|g(v)|=c^{7/2}$ and
\[
 g(v)=(v-m)^2-\frac{(\zeta_1-\zeta_2)^2}{4},
 \qquad
 |g(v)|<c^3=
 \left|\frac{(\zeta_1-\zeta_2)^2}{4}\right|.
\]
Hence
\[
 |v-m|=c^{3/2},\qquad |g'(v)|=c^{3/2}/2.
\]
The derivative of $A(z-\gamma)/(zQ(z))$ at $v$,
multiplied by $g(v)$, has norm at most $c^{7/2}$.
Hence $|H'(v)|=c^{3/2}/2$.
The multiplier of $H^3$ at $h$ has norm
\[
 c^{-3}\cdot\frac{c^{1/2}}2\cdot\frac{c^{3/2}}2
 =\frac1{4c}>1,
\]
a contradiction. Therefore $c\geq1/4$.

Now $|2(x-\zeta_1)|=c^{3/2}/2\leq c^2$.
Together with $|g'(x)|\leq c^2$, this gives
$R\leq c^2$.
\end{proof}

We next determine the location of the disk $\DD(\gamma,c^2)$.
For $c^2<|z|<c$, we have $|H(z)-C|<c^2$.
We must therefore show that $C$ lies in
$\DD(\gamma,c^2)$ and control its distance
from $\gamma$.

\begin{lemma}
\label{bd:lem:centralrestrictions}
We have
\[
 \overline{D/C^2}=1,\qquad
 \DD(C,c^2)=\DD(\gamma,c^2),\qquad
 |H(C)|=|C-\gamma|<c^2.
\]
\end{lemma}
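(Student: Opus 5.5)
The plan is to reduce the three assertions to the single statement $|C-\gamma|<c^2$, and to prove that statement by contraposition using a return map through the pole, as in Lemma~\ref{bd:lem:equalityB} and Lemma~\ref{bd:thm:smallpole}.

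\emph{Step 1: the algebraic reductions.} Recall $|B|=c/2$, $c\geq1/4$ and $R\leq c^2$. Vieta's formulas give
\[
 \zeta_1\zeta_2\gamma=-D/A,\qquad
 \zeta_1\zeta_2+\gamma(\zeta_1+\zeta_2)=C/A,\qquad
 \gamma+\zeta_1+\zeta_2=-B/A.
\]
Since $|\zeta_1+\zeta_2|\leq\max\{|B|,c^2\}<c$, the term $\gamma(\zeta_1+\zeta_2)$ has norm less than $c^3<c^2=|C|$. Hence $C/A$ agrees with $\zeta_1\zeta_2$ to relative precision less than one, and so
\[
 \gamma=-\frac{D}{A\zeta_1\zeta_2}=-\frac{D}{C}\,(1+\varepsilon),\qquad |\varepsilon|<1.
\]
In residue characteristic two, $-1$ reduces to $1$. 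Therefore $\overline{\gamma/C}=\overline{D/C^2}=:t$. It follows that $\overline{D/C^2}=1$ holds if and only if $|C-\gamma|<c^2$, which in turn holds if and only if $\DD(C,c^2)=\DD(\gamma,c^2)$. The last claim comes from the factorization \eqref{mx:eq:factor} evaluated at $z=C$. For $|z|=c^2$ we have $|g(z)|=c^2$ and $|zQ(z)|=c^2$, so $|H(C)|=|C-\gamma|$. Thus everything reduces to showing $t=1$.

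\emph{Step 2: the residue dynamics on the circle $|z|=c^2$.} Write $z=Cu$ with $|u|=1$. From $H(z)=D/z+C+(\text{terms of norm}<c^2)$, the reduction of $H(Cu)/C$ is the M\"obius map
\[
 \mu(u)=1+t/u.
\]
In this picture the plateau $c^2<|z|<c$ sits in the direction $u=\infty$, the pole side in the direction $u=0$, and $\gamma$ in the direction $u=t$. One checks
\[
 \mu(\infty)=1,\qquad \mu(t)=0,\qquad \mu(0)=\infty .
\]
So the plateau maps into $\DD(C,c^2)$ (the class $u=1$), the class of $\gamma$ maps into $\DD(0,c^2)$, and $\DD(0,c^2)$ contains the pole disks. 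Assume $t\neq1$. I aim to build a cycle of disks
\[
 \text{pole disk }\DD(\xi,|\xi|)\ \to\ \text{plateau disk}\ \to\ \DD(C,c^2)\ \to\ \cdots\ \to\ \DD(\gamma,c^2)\ \to\ \DD(0,c^2).
\]
The first step uses Lemma~\ref{tk:lem:poledisk} with $|\xi|=s$ close to $c^3$; it has derivative norm $c^4/s^2$ and lands on a disk of radius $c^4/s$ in the plateau. On the plateau, \eqref{tk:eq:derivativebound} bounds the derivative by $c/2$ once $s$ is small enough. The steps along the circle $|z|=c^2$ have derivative norm one, because the pole term $-D/z^2$ dominates there. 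The total multiplier is then about $c^5/(2s^2)$, which is greater than one for $s$ near $c^3$, since $1/(2c)>1$ when $c<1/2$. Lemma~\ref{tk:lem:refine} and Lemma~\ref{tk:lem:count} would then give a repelling fixed point of the iterate. The inclusion $\sup|z|<R$ required by Lemma~\ref{tk:lem:count} holds because the first disk lies in $|z|<c^2$ while the final target is $\DD(0,c^2)$.

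\emph{Main obstacle.} This chain closes only if the $\mu$-orbit of $1$ reaches $t$. Since $\mu$ is a permutation of $\PP(k)$ and $k$ is infinite, that need not happen for arbitrary $t\neq1$. My first attempt is to use instead the fixed points of $H$ on $|z|=c^2$. Clearing denominators in $H(z)=z$ shows they have reductions $u$ satisfying $u^2+u+t=0$. Their multipliers reduce to $t/u^2$, and I would try to play these against $\mu$ to produce either a repelling point or the identity $t=1$. Failing that, I would replace the direction $u=1$ by a general class: pull back through $\mu^{-1}$ from $\DD(\gamma,c^2)$ and push forward from the plateau, and show that the condition $|B|=c/2$, which appears as the size of the derivative at the plateau exit, forces the two orbits to meet unless $t=1$. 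I expect this combinatorial closing step to require the most care.
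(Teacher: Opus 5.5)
Your Step~1 is sound. The Vieta computation gives $\overline{\gamma/C}=\overline{D/C^2}=t$; the paper gets the same identity from $\mu(\overline{\gamma/C})=0$ in characteristic two. Evaluating the factorization at $z=C$ gives $|H(C)|=|C-\gamma|$, so the lemma does reduce to excluding $t\neq1$.

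Step~2, however, is left open, and as set up it does not close. The obstacle you name is not the real one. Since $k=\overline{\mathbf F}_2$ and $t\neq0$, the map $\mu$ lies in $\operatorname{PGL}_2$ of the finite field $\mathbf F_2(t)$, so it has finite order. Because $t\mapsto0\mapsto\infty\mapsto1$, the $\mu$-orbit of $1$ does return to $t$. The genuine problem is that your radius $s$ is not free. Take a plateau circle $|z|=\rho$ with $\rho$ near $c$; there $(A-CF)z^2$ dominates $H(z)-C$. A residue class on this circle therefore maps with degree two onto a disk of radius $\rho^2$ lying on the circle $|y-C|=\rho^2$. After the isometric transport along $|z|=c^2$, the last disk must be the class containing $0$ for Lemma~\ref{tk:lem:count} to apply, and this forces $\rho^2=|H^{n+1}(C)|$. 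That quantity is dictated by the map, so $s=c^4/\rho$ need not be near $c^3$, and your estimate $c^5/(2s^2)>1$ is unjustified. Your fallback via fixed points on $|z|=c^2$ cannot help either: $|H'|=1$ on that circle, so those points are indifferent.

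The missing idea is to pass through the circle $|z|=c$ itself rather than the plateau. Take $|\xi|=c^3$ exactly, the limit your estimate points to. Then Lemma~\ref{tk:lem:poledisk} maps $\DD(x_1,c^3)$ bijectively onto a full residue class $\DD(x_0,c)$ of $|z|=c$, with derivative norm $c^{-2}$. On $|z|=c$ the reduction of $H(\zeta_1u)/C$ is $\bar u^2+1$, which is surjective onto the residue classes of $|z|=c^2$. The class of $\gamma$ is $\overline{z/C}=t$, and $\bar u^2=1+t$ has a double root $\bar u_0$. This root is nonzero because $t\neq1$, and different from $1$ because $t\neq0$. Hence $\DD(x_0,c)$ can be chosen disjoint from $\DD(\zeta_1,c)$ and mapping with degree two onto $\DD(\gamma,c^2)$, which in turn maps isometrically onto $\DD(0,c^2)$.

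On $\DD(x_0,c)$, write $H=\frac{A(z-\gamma)}{zQ(z)}\,g$ with $g'(z)=2(z-\zeta_1)+(\zeta_1-\zeta_2)$. Since $R\leq c^2<c/2$, this gives $|g'|=c/2$, while the other term of $H'$ has norm at most $c^2$, so $|H'|=c/2$. The bound \eqref{tk:eq:derivativebound} alone would not suffice, because $2Az$ and $B$ have the same norm there. Applying Lemma~\ref{tk:lem:count} to $\DD(x_1,c^3)\to\DD(x_0,c)\to\DD(\gamma,c^2)\to\DD(0,c^2)$ yields a fixed point of $H^3$ with multiplier norm $c^{-2}\cdot(c/2)\cdot1=1/(2c)>1$. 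No iteration of $\mu$ is needed.
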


\begin{proof}
For distinct $z,w\in\DD(a,c^2)$ with $|a|=c^2$,
the term $D/z$ gives $-D/(zw)$ in the difference quotient
$(H(z)-H(w))/(z-w)$.
This term has norm one.
The difference quotient of $H(z)-D/z$ has norm less than one.
The power-series expansion therefore shows that $H$ maps this
disk bijectively onto $\DD(H(a),c^2)$ and preserves distances.
In particular, $\DD(\gamma,c^2)$ maps onto $\DD(0,c^2)$.

In the coordinate $w=z/C$, the reduced map
on $|z|=c^2$ is
\begin{equation}
\label{mx:eq:M}
 w\longmapsto1+\frac{\overline{D/C^2}}{w}.
\end{equation}
If $\overline{D/C^2}\ne1$, the residue of
$\gamma/C$ is different from $1$.
Write $z=\zeta_1u$ on $|z|=c$.
As in the proof of Lemma~\ref{bd:lem:equalityB}, the
reduction of $H(\zeta_1u)/C$ is $\bar u^2+1$.
The equation
\[
 \bar u^2+1=\overline{\gamma/C}
\]
has a unique nonzero root of multiplicity two, since
$\overline{\gamma/C}\ne1$.
This root is different from one because
$\overline{\gamma/C}\ne0$.
It therefore determines a disk $\DD(x_0,c)$ disjoint from
$\DD(\zeta_1,c)$, on which $H$ maps onto
$\DD(\gamma,c^2)$ with mapping degree two.
Choosing a disk near zero that $H$ maps bijectively onto
$\DD(x_0,c)$ gives
\[
 \DD(x_1,c^3)
 \xrightarrow{H}\DD(x_0,c)
 \xrightarrow{H}\DD(\gamma,c^2)
 \xrightarrow{H}\DD(0,c^2),
 \qquad |x_1|=c^3<c^2.
\]
For every $z\in\DD(x_0,c)$, we have
$|z-\zeta_1|=|z-\zeta_2|=c$. Since
$R\leq c^2<c/2$, we have $|g'|=c/2$.
The derivative of $A(z-\gamma)/(zQ(z))$, multiplied
by $g(z)$, has norm at most $c^2$.
Thus $|H'|=c/2$ throughout this disk.
Lemma~\ref{tk:lem:count} gives a fixed point $h$ of $H^3$
in $\DD(x_1,c^3)$, and the multiplier of $H^3$ at $h$
has norm
\[
 c^{-2}\cdot(c/2)\cdot1=\frac1{2c}>1,
\]
a contradiction. Hence $\overline{D/C^2}=1$.

Applying \eqref{mx:eq:M} to $\gamma/C$ and using
$H(\gamma)=0$ and $\overline{D/C^2}=1$, we obtain
\[
 0=1+\frac1{\overline{\gamma/C}}.
\]
In characteristic two, this gives $\overline{\gamma/C}=1$,
so
\[
 |C-\gamma|<c^2,\qquad
 \DD(C,c^2)=\DD(\gamma,c^2).
\]
Distance preservation on this disk yields
\[
 |H(C)|=|H(C)-H(\gamma)|=|C-\gamma|<c^2.
\]
\end{proof}

The next composition of disk maps gives the quantitative bounds
needed for the definition of $\phi$.

\begin{proposition}
\label{bd:prop:collision}
\label{mx:prop:four}
We have
\[
 c\geq2^{-3/2},\qquad |C-\gamma|\leq4c^4.
\]
\end{proposition}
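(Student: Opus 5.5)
We plan to build, as in the earlier lemmas, a return map whose first disk lies near the pole. We then invoke Lemma~\ref{tk:lem:count} and read off both inequalities from the requirement that the resulting multiplier have norm at most one. By Lemma~\ref{bd:lem:centralrestrictions}, $\DD(\gamma,c^2)$ maps isometrically onto $\DD(0,c^2)$ with $H(\gamma)=0$. The point $C$ is the image value of the whole annulus $c^2<|z|<c$, and $|H(C)|=|C-\gamma|$. Put $t=|C-\gamma|<c^2$. If $t>0$, the disk $\DD(C,t)$ contains $\gamma$ on its boundary sphere, and $H$ maps $\DD(C,t)$ isometrically onto $\DD(H(C),t)=\DD(0,t)$ with derivative norm one. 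The annulus $c^2<|z|<c$ is then the source of maps into a neighborhood of $C$.

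Concretely, on $c^2<|z|<c$ we have $H(z)-C=D/z+Bz+Az^2+\cdots$ (after expanding $1/Q$). The terms $D/z$, $Bz$ and $Az^2$ have norms $c^4/|z|$, $c|z|/2$ and $|z|^2$. At $|z|=s$ with $s$ chosen so that $c^4/s=cs/2$, i.e.\ $s=2^{1/2}c^{3/2}$, the two dominant terms balance and $H(z)-C$ has norm at most that common value, $2^{-1/2}c^{5/2}$. The derivative there has norm about $c/2$. I would run the following chain. First choose a disk near zero mapping bijectively, by Lemma~\ref{tk:lem:poledisk}, onto a disk on the circle $|z|=s$ (or on a nearby circle). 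Next map that disk by $H$ into $\DD(C,t)$ with the derivative $c/2$ or $c^4/s^2$. Then apply the isometry onto $\DD(0,t)$. The final target is $\DD(0,t)$, or $\DD(0,c^2)$ if we instead pass through all of $\DD(\gamma,c^2)$. The first disk has radius and absolute value of order $c^4/|x_0|$, which is small compared with $t$ exactly in the regime we want to exclude. The multiplier is the product $(|x_0|^2/c^4)\cdot|H'(x_0)|\cdot 1$, and this exceeds one when $c$ is small or when $t$ is large relative to $c^4$.

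The two inequalities come from two choices of the middle circle. For $t>4c^4$, pick $x_0$ with $|H(x_0)-C|=t$, solving the balance of $D/z$ against $Bz$, so that $|x_0|\approx 2c^3$ up to the Newton-polygon corrections. Then pull back to a bijective disk near the pole with $|x_1|=c^4/|x_0|<t$. The multiplier is $|x_0|^2c^{-4}\cdot(c/2)$, and exceeding one reduces to an inequality between $t$ and $4c^4$. This gives $|C-\gamma|\le 4c^4$, and the constant $4$ should come out exactly from $|B|=c/2$. For $c<2^{-3/2}$, use the circle where the $Bz$ and $Az^2$ terms balance, $|z|=c/2$. The resulting multiplier computation should give something like $1/(8c^2)>1$, contradicting the hypothesis and hence giving $c\ge2^{-3/2}$. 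Throughout, Lemma~\ref{tk:lem:refine} is used to ensure surjectivity of the intermediate disk maps whenever a degree-two piece appears.

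The main obstacle is the middle step: identifying precise disks on the intermediate circle on which $H$ is surjective onto a disk around $C$ (or onto $\DD(\gamma,c^2)$), with a controlled constant derivative norm. Near $|z|=s$, cancellation between $D/z$ and $Bz$ makes the Newton polygon of $H(z)-y$ degenerate. I would first handle this by writing $H(z)-C$ in the rescaled coordinate $z=s u$ and reducing, to get a reduced rational map like $\bar u^{-1}+\bar u$ up to units. In characteristic two, this map has a single critical residue class, and this identifies which residue class maps with degree two and which with degree one. If this is too delicate, I would fall back on choosing the circle slightly off balance, so that one term strictly dominates, and accept a small loss in the constants.
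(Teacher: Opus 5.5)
There is a genuine gap. Your overall frame matches the paper's: a return map ending with the isometry of $\DD(\gamma,c^2)$ onto $\DD(0,c^2)$, then Lemma~\ref{tk:lem:count}. The chain you propose, however, cannot produce either inequality.

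First, the balances you rely on are not the relevant ones. Lemma~\ref{bd:thm:smallpole} already gives $c\ge1/4$. Then $(c/2)|z|\le\max\{|z|^2,c^4/|z|\}$ throughout $c^2<|z|<c$, so the term $Bz$ never strictly dominates. The only genuine balance among $Az^2$, $Bz$, $D/z$ is $Az^2$ against $D/z$, at $|z|=c^{4/3}$. Your $|x_0|\approx2c^3$ is smaller than $c^2$, so it lies in the pole region rather than in the annulus. On $|z|=c/2$ the term $D/z$ dominates. So the values $1/(8c^2)$ and ``the $4$ comes from $|B|=c/2$'' have no computation behind them. Also, $\DD(H(C),t)\ne\DD(0,t)$: since $|H(C)|=t$, this disk omits $0$, and you must work with disks centered at $\gamma$.

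Second, and more seriously, the three-step chain (pole disk, then annulus disk, then disk about $\gamma$, then disk about $0$) is exactly neutral. Put $L=\max\{t,c^{8/3}\}$. The preimage $v$ of $\gamma$ with $|v|=c^4/L$ gives a bijection $\DD(v,|v|)\to\DD(\gamma,L)$ with derivative norm $L^2/c^4$. The pole disk over it lies on $|z|=L$, hence not inside the final target $\DD(0,L)$, and the product of derivative norms is $(c^4/L^2)(L^2/c^4)=1$. The remaining preimages of $\gamma$ in the annulus, on $|z|=t^{1/2}$ when $t>c^{8/3}$, give a repelling point only if $t>2c^3$. That bound is weaker than $4c^4$ because $c<1/2$, and none of these chains says anything about $c$. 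Your fallback of losing constants would not help either. The exact values $c\ge2^{-3/2}$ and $\kappa=4c^4$ are used sharply later, when $\phi$ is constructed.

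The missing idea is one extra pullback through the outer region $c<|z|<1$, which breaks this neutrality. Put $r=c^4/L\in(c^2,c^{4/3}]$ and suppose $r<1/4$. The paper pulls $\DD(v,r)$ back to a disk $\DD(x_0,r^{1/2})$ that maps onto it with degree two. There the reduction is $\bar u^2$, and in characteristic two both preimages lie in one residue class; moreover $|H'|=r^{1/2}/2$ on this disk. Only then does it pull back to a pole disk, which now sits at $|x_1|=c^4/r^{1/2}=Lr^{1/2}<L$. The resulting chain through $\DD(x_0,r^{1/2})$, $\DD(v,r)$, $\DD(\gamma,L)$ onto $\DD(0,L)$ has multiplier $(r/c^4)(r^{1/2}/2)(L/r)=1/(2r^{1/2})>1$, a contradiction. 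Hence $r\ge1/4$. Since $r\le c^{4/3}$, this gives $c\ge2^{-3/2}$, and also $|C-\gamma|\le L=c^4/r\le4c^4$. So the constant $4$ comes from $|2|=1/2$ in the term $2Az$ of $H'$, not from $|B|$.

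You would also need a separate construction of $v$ when $t\le c^{8/3}$. In that case the reduced cubic for $H(z)=\gamma$ on $|z|=c^{4/3}$ is separable in characteristic two, so Hensel's lemma applies.
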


\begin{proof}
Put
\[
 r_0=c^{4/3},\qquad
 L=\max\{|C-\gamma|,r_0^2\},\qquad r=c^4/L.
\]
Then $c^2<r\leq r_0$.
We first find a bijective map
\[
 H:\DD(v,r)\longrightarrow\DD(\gamma,L)
\]
with $|v|=r$ and derivative norm $L/r$.

If $|C-\gamma|>r_0^2$, then $L=|C-\gamma|$
and $r<r_0$. For every $y\in\DD(\gamma,L)$,
\[
 |A-yF|=1,\qquad |B-yE|=c/2,\qquad |C-y|=L.
\]
Together with
\[
 r^3<c^4,\qquad (c/2)r^2<c^4,
\]
these bounds give a unique root of \eqref{bd:eq:fiber}
with absolute value $r$. Choose the solution $v$ of
$H(v)=\gamma$ with $|v|=r$.
The linear term in the expansion of $H$ at $v$ dominates
on $\DD(v,r)$, giving the bijective map onto $\DD(\gamma,L)$
with constant derivative norm $c^4/r^2=L/r$.

If $|C-\gamma|\leq r_0^2$, then $L=r_0^2$
and $r=r_0$. Choose $\beta$ with
$(A-\gamma F)\beta^3=D$.
After substituting $z=\beta u$ into the polynomial equation
obtained from $H(z)=\gamma$ and dividing by $D$,
the reduced equation is
\[
 u^3+tu+1=0,\qquad
 t=\overline{(C-\gamma)\beta/D}.
\]
The quadratic coefficient has norm
$1/(2c^{1/3})<1$, because $c\geq1/4$.
In characteristic two,
\[
 (u^3+tu+1)-u(3u^2+t)=1,
\]
so every root is simple.
Hensel's lemma lifts a root of the reduced equation
to a solution $u$ of the equation before reduction.
Set $v=\beta u$. Then
\[
 H(v)=\gamma,\qquad |v|=r.
\]
For $|y-\gamma|<L$, the polynomials obtained by clearing
denominators in $H(z)=y$ and $H(z)=\gamma$ differ by
$(y-\gamma)zQ(z)$. This difference has norm less than
$Lr=r^3$ on $|z|=r$.
For every $y\in\DD(\gamma,L)$, the same simple root
of the reduced equation gives exactly one solution of
$H(z)=y$ in $\DD(v,r)$.
Hence $H$ maps $\DD(v,r)$ bijectively onto
$\DD(\gamma,L)$, with constant derivative norm $r=L/r$.

Suppose that $r<1/4$.
The construction in the proof of Lemma~\ref{bd:thm:smallpole},
with $s=r^{1/2}$ and $a=v$, gives a disk
$\DD(x_0,r^{1/2})$ mapping onto $\DD(v,r)$
with mapping degree two.
Choose a disk near zero that $H$ maps bijectively onto
$\DD(x_0,r^{1/2})$.
Since $H$ preserves distances on $\DD(\gamma,c^2)$,
it maps $\DD(\gamma,L)$ onto $\DD(0,L)$.
Together these maps give
\[
 \DD(x_1,c^4/r^{1/2})
 \xrightarrow{H}\DD(x_0,r^{1/2})
 \xrightarrow{H}\DD(v,r)
 \xrightarrow{H}\DD(\gamma,L)
 \xrightarrow{H}\DD(0,L).
\]
Here $c<r^{1/2}<1/2$ and
\[
 |x_1|=c^4/r^{1/2}=Lr^{1/2}<L.
\]
Thus Lemma~\ref{tk:lem:count} gives a fixed point $h$ of
$H^4$ in the first disk.
The multiplier of $H^4$ at $h$ has norm
\[
 |(H^4)'(h)|=\frac{r}{c^4}\cdot\frac{r^{1/2}}2
 \cdot\frac Lr\cdot1
 =\frac1{2r^{1/2}}>1,
\]
a contradiction. Hence $r\geq1/4$.
Since $r\leq c^{4/3}$ and
$|C-\gamma|\leq L=c^4/r$, the stated bounds follow.
\end{proof}

\subsubsection{Construction of $\phi$ and the uniform derivative bound}

Combining Lemmas~\ref{bd:lem:equalityB},
\ref{bd:thm:smallpole}, and~\ref{bd:lem:centralrestrictions}
with Proposition~\ref{bd:prop:collision}, we have
\[
 2^{-3/2}\leq c<1/2,\qquad
 |B|=c/2,\qquad
 R\leq c^2,\qquad |C-\gamma|\leq4c^4.
\]
We also have $\overline{D/C^2}=1$ and
$\DD(C,c^2)=\DD(\gamma,c^2)$.
These restrictions have two roles in the derivative estimates.
The bound on $|\zeta_1-\zeta_2|$ controls the derivative
near $\zeta_1$ and $\zeta_2$.
The equality of the two disks identifies the image of
the annulus $c^2<|z|<c$, and the bound on $|C-\gamma|$
controls the displacement of that image from $\gamma$.

We are now ready to construct $\phi$.
Since $H$ preserves distances on $\DD(\gamma,c^2)$, we use
the distance from $\gamma$ on this disk. Elsewhere, we use
the absolute value of $z$, truncated at one.
The lower bound on $c$ and the bound on $|C-\gamma|$ allow
us to use the same positive lower bound $4c^4$ in both expressions.
Put $\kappa=4c^4$ and define
\[
 \phi(z)=
 \begin{cases}
  \max\{\kappa,|z-\gamma|\},
       &z\in\DD(\gamma,c^2),\\
  \max\{\kappa,\delta(z),4\delta(z)^2\},
       &\text{otherwise}.
 \end{cases}
\]
On $\DD(\gamma,c^2)$, the identity $|H(z)|=|z-\gamma|$
relates the two values of $\phi$ directly.
The constant $\kappa$ ensures that cancellation among terms
whose norms are at most $\kappa$ does not affect the comparison.
In particular, $\kappa\leq\phi(z)\leq4$.

We verify $H^\#(z)\phi(z)\leq\phi(H(z))$.

\begin{itemize}
\item
On $\mathcal E$, forward invariance gives
$\phi(z)=\phi(H(z))=4$, and $H^\#(z)\leq1$.

If $|z|\leq c^4$, the pole term gives
$H(z)\in\mathcal E$, $\phi(z)=\kappa$, and
$H^\#(z)=c^{-4}$. Hence equality holds.
At zero, use the coordinate $1/H$.

\item
If $c^4<|z|<c^2$, then
$|H(z)|=c^4/|z|>c^2$ and
\[
 \begin{aligned}
 H^\#(z)\phi(z)
 &=\frac{c^4}{|z|^2}\max\{\kappa,|z|\}\\
 &=\max\{4|H(z)|^2,|H(z)|\}
 =\phi(H(z)).
 \end{aligned}
\]

\item
On $\DD(\gamma,c^2)$, distance preservation gives
\[
 |H'(z)|=1,\qquad |H(z)|=|z-\gamma|<c^2<1/4.
\]
Therefore
\[
 H^\#(z)\phi(z)
 =\max\{\kappa,|z-\gamma|\}
 =\phi(H(z)).
\]

If $|z|=c^2$ and $z\notin\DD(\gamma,c^2)$, then
$\overline{z/C}$ is nonzero and different from one.
The reduced map gives
\[
 \overline{H(z)/C}
 =1+\frac1{\overline{z/C}}\notin\{0,1\}.
\]
Thus $|H(z)|=c^2$ and $H(z)\notin\DD(\gamma,c^2)$.
Both $\phi(z)$ and $\phi(H(z))$ equal $c^2$, and $|H'(z)|=1$.

\item
If $c^2<|z|<c$, then $H(z)$ lies in
$\DD(C,c^2)=\DD(\gamma,c^2)$.
We have
\[
 H(z)-\gamma=
 \frac{(A-\gamma F)z^2+(B-\gamma E)z
       +(C-\gamma)+D/z}{Q(z)}.
\]
The four numerator terms have norms
\[
 |z|^2,\qquad (c/2)|z|,\qquad
 |C-\gamma|,\qquad c^4/|z|.
\]
The terms $(B-\gamma E)z$ and $C-\gamma$ have norms
at most $\kappa$.
The terms $(A-\gamma F)z^2$ and $D/z$ can cancel only when
$|z|=c^{4/3}$, where their common norm is
$c^{8/3}\leq\kappa$.
Consequently,
\[
 \phi(H(z))=\max\{\kappa,|z|^2,c^4/|z|\}.
\]
Equation~\eqref{tk:eq:derivativebound} gives
\[
 |H'(z)|\leq\max\{c/2,c^4/|z|^2\}.
\]
Since $\phi(z)=\max\{|z|,4|z|^2\}$, we obtain
\[
 \begin{aligned}
 H^\#(z)\phi(z)
 &\leq
 \max\{c|z|/2,2c|z|^2,c^4/|z|,\kappa\}\\
 &\leq\max\{\kappa,|z|^2,c^4/|z|\}
 =\phi(H(z)).
 \end{aligned}
\]
Here $c|z|/2\leq c^2/2\leq\kappa$ and $2c<1$.

\item
If $|z|=c$, put $\ell=|z-\zeta_1|\leq c$.
The factorization \eqref{mx:eq:factor} and
$R\leq c^2$ give
\[
 |H'(z)|\leq\max\{c^2,\ell/2\},\qquad
 \max\{\kappa,|H(z)|\}=\max\{\kappa,\ell^2\}.
\]
Indeed, if $\ell>R$, then
$|z-\zeta_1|=|z-\zeta_2|=\ell$. If $\ell\leq R$,
both $|g(z)|$ and $\ell^2$ are at most
$R^2\leq c^4\leq\kappa$.

The reduction of $H(z)/C$ is $1+\overline{Az^2/C}$.
Since $|Az^2/C|=1$, this reduction is different from one,
so $H(z)$ does not
lie in $\DD(\gamma,c^2)$. Thus
\[
 \phi(z)=4c^2,\qquad
 \phi(H(z))=\max\{\kappa,\ell^2\}.
\]
It follows that
\[
 H^\#(z)\phi(z)
 \leq\max\{\kappa,2c^2\ell\}
 \leq\max\{\kappa,\ell^2\}
 =\phi(H(z)).
\]
For the second inequality, use $\kappa$
when $\ell\leq2c^2$ and $\ell^2$ otherwise.

\item
If $c<|z|<1$, then
\[
 |H(z)|=|z|^2,\qquad
 |H'(z)|\leq|z|\max\{1/2,|z|\},\qquad
 \phi(z)=4|z|^2.
\]
Therefore
\[
 H^\#(z)\phi(z)
 \leq\max\{2|z|^3,4|z|^4\}
 \leq\max\{|z|^2,4|z|^4\}
 =\phi(H(z)).
\]
If $|z|\leq1/2$, use $2|z|^3\leq|z|^2$;
otherwise, use $2|z|^3\leq4|z|^4$.
\end{itemize}

Since $\kappa\leq\phi(z)\leq4$, the chain rule gives
\[
 (H^n)^\#(z)
 \leq\frac{\phi(H^n(z))}{\phi(z)}
 \leq\frac4\kappa=c^{-4}=|D|^{-1}.
\]
This completes the uniform bound on the derivatives of all iterates in Case~3.

\begin{proof}[Proof of Theorem~\ref{sy:cor:normalform}]
The three cases exhaust all possibilities.
Cases~1 and~3 give the uniform bound $|D|^{-1}$, while
Case~2 gives $d^{-4}$, where $d=|D|^{1/3}$.
Each bound is finite and independent of the starting point
and the iterate. Lemma~\ref{tk:lem:fatou} therefore gives
$\JI(H)=\varnothing$.
\end{proof}

\section{The three-adic case}
\label{sec:three-adic}

The goal of this section is to prove the following
uniform derivative bound.

\begin{theorem}
\label{p3:thm:core}
Let $H\in\Cthree(z)$ satisfy
\eqref{tk:eq:H}--\eqref{tk:eq:norms}.
If $H$ has no classical repelling periodic point, then
\[
 (H^n)^\#(z)\leq |D|^{-1}
 \qquad\text{for all }z\in\PP(\Cthree)
 \text{ and }n\geq0.
\]
Consequently, $\JI(H)=\varnothing$.
\end{theorem}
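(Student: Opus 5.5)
We follow the same three-case split furnished by Proposition~\ref{tk:prop:coefficientbounds} (Cases~1--3). In each case we aim for a function $\phi$ with $|D|\leq\phi\leq1$ and $H^\#\phi\leq\phi\circ H$; Theorem~\ref{bd:thm:boundedfactor} then gives the stated bound $|D|^{-1}$. The key simplification at $p=3$ is that $|2|=1$. Consequently, in \eqref{tk:eq:derivativebound} the term $|2||z|$ becomes $|z|$, and for $d<|z|<1$ (respectively $c<|z|<1$) we have $|H'(z)|\leq|z|$. This is exactly what is needed to compare with $\phi(H(z))=|H(z)|=|z|^2$ when $\phi(z)=|z|$. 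The price is that $|3|<1$: the tripled root configuration in Case~2 can have small derivative, and the quadratic reduction $\bar u^2+1$ now has two distinct roots, so the disk maps of degree two split into two bijective pieces.

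\textbf{Cases~1 and~2.} We try the single function
\[
\phi(z)=\max\{|D|,\lambda\delta(z),\delta(z)^2\}
\]
in Case~1 (with $\lambda=|D|/|C|$), and $\phi_0=\max\{d^3,\delta\}$ in Case~2. Off the critical circles the verification is as in Section~\ref{sec:two-adic}, only easier, since no lower bound such as $\lambda\geq1/4$ is needed.

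In Case~1 the only delicate region is $\lambda\leq|z|\leq(|D|/\lambda)^{1/2}$, where we need $|H'(z)|\leq\max\{\lambda,|D|/|z|^2\}$. This requires $|B|\leq\lambda$, and we prove it by the argument of Lemma~\ref{sp:lem:coefficientbounds}. The $H^2$ return through $\DD(\eta,c\lambda)\to\DD(\xi,c)\to\DD(0,c^2)$ still works, because $H$ maps a neighbourhood of the zero $\xi$ with $|\xi|=c$ (now a simple zero, by $p\neq2$) onto $\DD(0,c^2)$. The non-repelling condition then gives $|H'(x)|\leq\lambda$. On $|z|=c$ we have $Q^2H'=2Az+B+\varepsilon$, and this pins down $B$ relative to $-2Ax$. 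I expect that when $|2Ax|=c$, one of the terms must nearly cancel, which forces the configuration $B\equiv-2Ax$. We then have to exclude it, or absorb it, by noting that $x$ is then essentially the critical point $\bar u$ of $\bar u^2+\ldots$.

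In Case~2 the circle $|z|=d$ needs the return-product estimate of Lemma~\ref{lc:lem:uniformderivative}. With $|2|=1$, every step with $d<|z|<1$ has $|H'(w)||w|/|H(w)|\leq1$, so a full return costs at most $d^{-1+2^{-k}}\cdot|H'|$ at the circle point. I aim to show $|H'|\leq d$ on $|z|=d$ directly. This holds because, when $p=3$, the reduction of $H(du)$ is a cubic of the form $\bar A\bar u^3+\bar b\bar u^2+\bar c\bar u+\bar\delta$ over $k$, so the bound $|P'(z)|\leq d^2$ is automatic. The refined bound, analogous to Lemma~\ref{lc:lem:terminalderivative}, controls returns with $k\geq1$ via the root spread $R$. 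When $R$ is small, the factor $|3|$ in $P'$ helps rather than hurts.

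\textbf{Case~3, the main obstacle.} For $|D|=|C|^2$ in characteristic three, $\bar u^2+1=0$ has two distinct roots $\pm i$. So $\DD(\zeta_1,c)$ and $\DD(\zeta_2,c)$ are distinct, each mapping bijectively onto $\DD(0,c^2)$, and the derivative on $|z|=c$ is of size $c$ unless $B$ cancels. I would first run the $H^2$ construction of Lemma~\ref{bd:lem:equalityB} to get $|H'(x)|\leq c^2$ at a point $x$ near $\zeta_1$. This forces $|P'(\zeta_j)|$ to be small, which contradicts $|\zeta_1-\zeta_2|=c$ unless the derivative estimate is reorganised. Thus I expect either a contradiction outright, or a reduction to $|B|\leq c^2$, after which the Case~1 function with $\lambda=c^2$ applies verbatim. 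The verification on $\DD(\gamma,c^2)$ uses the distance-preservation argument of Lemma~\ref{bd:lem:centralrestrictions}. Here the reduced Möbius map $w\mapsto1+\overline{D/C^2}/w$ must be controlled without characteristic-two identities, possibly by a modified $\phi$ measuring $|z-\gamma|$ on that disk as in Section~\ref{sec:two-adic}. That is the step I would work out first.
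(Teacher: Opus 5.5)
\textbf{Gap 1: Cases~1 and~3.} At $p=3$ these cases never occur (Proposition~\ref{p3:prop:generic}), and your plan for building $\phi$ in them cannot be completed. With $|2|=1$, the $H^2$ return of Lemma~\ref{sp:lem:coefficientbounds} gives $|H'(x)|\le\lambda$ at a point with $|x|=c$. Since $|2Ax|=c>\lambda$, this forces $|B|=c$ with $B$ cancelling $2Ax$, so the restriction $|B|\le\lambda$ you want is never obtained.

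The missing step is to exploit that cancellation. Write $H=A(z-\gamma_0)g(z)/(zQ(z))$ with $g=(z-\zeta_1)(z-\zeta_2)$. Then $|g'(x)|\le\lambda$, which forces $|x-\zeta_1|=|x-\zeta_2|$ and hence $|\zeta_1-\zeta_2|=(c\lambda)^{1/2}<c$. The period-three return $\DD(x_1,c^{3/2}\lambda)\to\DD(x_0,c^{1/2})\to\DD(\zeta_1,c)\to\DD(0,c^2)$ then has multiplier of norm $\lambda^{-1/2}>1$, a contradiction. The same argument with $\lambda=c^2$ disposes of Case~3.

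Your Case~3 analysis assumes the reduction on $|z|=c$ is $\bar u^2+1$, with roots $\pm i$. That uses $|B|<c$, which is Proposition~\ref{tk:prop:coefficientbounds}(3) and holds only for $p=2$. At $p=3$ the reduction is a general quadratic, and the configuration that must be excluded is exactly the one where its two roots coincide.

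Independently, the Case~1 function $\max\{|D|,\lambda\delta,\delta^2\}$ violates $H^\#\phi\le\phi\circ H$ on the whole annulus $c<|z|<1$ at $p=3$. There $|H'(z)|=|z|$, so $H^\#(z)\phi(z)=|z|^3>\max\{|D|,\lambda|z|^2,|z|^4\}=\phi(H(z))$. The two-adic check relied on $|2||z|\le\max\{\lambda,|z|^2\}$, which came from $\lambda\ge1/4$. So this region is harder at $p=3$, not easier.

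\textbf{Gap 2: the circle $|z|=d$ in Case~2.} The bound $|H'|\le d$ handles only returns with $k=0$. For a first return of length $k+2$, the pole step contributes $d^{-3+2^{1-k}}$ and the $k$ outer steps contribute $d^{1-2^{-k}}$. The derivative along the return is therefore $|H'(z)|\,d^{-2+2^{-k}}$, not $d^{-1+2^{-k}}|H'(z)|$. So you need $|H'(z)|\le d^{2-2^{-k}}=|H(z)|/d$ whenever $|H(z)|=d^{3-2^{-k}}$, which is the content of Lemma~\ref{p3:lem:circlederivative}.

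This estimate fails even when $R=0$. For $P=A(z-\zeta)^3$ and $|z|=d$, one has $|H'(z)|=|z-\zeta|^2/(3d)$ whenever $|z-\zeta|<d/3$. If $d<1/27$, this exceeds $|H(z)|/d$ at the return values $|z-\zeta|=d^{(4-2^{-k})/3}$ for all large $k$. So the root spread alone cannot control the circle.

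What is missing is Lemma~\ref{p3:lem:coefficientbounds}. Start from $R<d$, which comes from a period-three return with multiplier $d^{-1/2}$. The chains of Lemma~\ref{p3:lem:inversechain} first give $R\le d^{4/3}$. Then let $k\to\infty$, so that $r=d^{2^{-k}}\to1$, and compare the four terms \eqref{p3:eq:fourterms} of the multiplier. This gives $d\ge1/27$, $|3A\zeta+B|\le d^{5/3}$ and $|P'(\zeta)|\le d^3$.

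With these restrictions, the single function $\phi=\max\{d^3,\delta\}$ satisfies the one-step inequality everywhere, including the circle. This is what yields the stated constant $|D|^{-1}$. A return-product argument that loses $d^{-1}$ at the last visit would give only $d^{-4}$.
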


Throughout this section, assume that $H$ satisfies
these hypotheses. Retain $P,Q,\mathcal E$ and put
\[
 d=|D|^{1/3},\qquad
 \delta(z)=
 \begin{cases}
  \min\{|z|,1\},&z\in\Cthree,\\
  1,&z=\infty.
 \end{cases}
\]
Proposition~\ref{tk:prop:coefficientbounds} gives
\[
 |D|\geq|C|^2,\qquad
 |B|\leq\max\{|C|^{1/2},d\}.
\]

We first obtain restrictions on the coefficients and zeros
of $P$ that control $H'$ on $|z|=d$.
We then combine this estimate with the formulas on
$0<|z|<d$ and $d<|z|<1$ to prove the uniform derivative
bound using $\phi(z)=\max\{|D|,\delta(z)\}$.

\subsection{Initial coefficient restrictions}
\label{p3:sec:generic}
\begin{proposition}
\label{p3:prop:generic}
Only Case~2 is possible. Every zero of $P$ has absolute
value $d$, and any two zeros $\zeta,\eta$ satisfy
$|\zeta-\eta|<d$. Moreover,
\[
 |B|<d,\qquad |C|<d^2,\qquad |D|>|C|^{3/2}.
\]
\end{proposition}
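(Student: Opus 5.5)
The plan is to exclude Cases~1 and~3 with a two-step return map, as in Proposition~\ref{tk:prop:coefficientbounds}. In the remaining Case~2, I would locate the zeros of $P$ by reducing a conjugate of $H^2$ at the type~II point of radius $d$ and applying Lemma~\ref{nm:lem:hole}. The strict inequalities should then follow from the shape of that reduction in characteristic three.

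\emph{Step 1: excluding Cases~1 and~3.} Suppose $|C|^2\leq|D|<|C|^{3/2}$. Put $c=|C|^{1/2}$ and $\lambda=|D|/|C|$, so that $c^2\leq\lambda<c$ and $|B|\leq c$. The Newton polygon gives two zeros $\zeta_1,\zeta_2$ of $P$ with $|\zeta_i|=c$. As in the proof of Lemma~\ref{bd:lem:equalityB}, the reduction of $H(\zeta_1u)/C$ on $|u|=1$ is $\bar u^2+\beta\bar u+1$ for some residue $\beta$, with $\bar u=1$ a root. The other root is $\bar u=1/1=1$ only if the polynomial is $(\bar u-1)^2$; I expect to rule this out because $\overline{2A\zeta_1^2/C}=2\neq0$ in characteristic three. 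So $\zeta_1,\zeta_2$ lie in distinct residue classes, and $|P'(\zeta_1)|=c^2$. Hence $H:\DD(\zeta_1,c)\to\DD(0,c^2)$ is bijective with $|H'|=c$. The Newton polygon of $H(z)=\zeta_1$ gives $\xi$ with $|\xi|=c\lambda$, and Lemma~\ref{tk:lem:poledisk} gives a bijection $\DD(\xi,c\lambda)\to\DD(\zeta_1,c)$ with derivative norm $1/\lambda$. Since $c\lambda<c^2$, Lemma~\ref{tk:lem:count} yields a fixed point of $H^2$ with multiplier of norm $c/\lambda>1$, a contradiction. So only Case~2 remains, and the Newton polygon then gives three zeros of $P$ with absolute value $d$.

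\emph{Step 2: clustering of the zeros.} Write $z=\beta u$ with $\beta^3=D/A$. The reduction of $P(\beta u)/D$ is a monic cubic
\[
 \pi(u)=u^3+b u^2+c' u+1,
\]
whose roots are the residues of the zeros of $P$. In characteristic three, a cubic with no simple root is a cube $(u+a)^3=u^3+a^3$. So if the zeros do not all lie in one open disk of radius $d$, then $\pi$ has a simple root $\bar u_0$. I would conjugate $H^2$ by $z\mapsto\beta z$ and study its reduction at the Gauss point, that is, at the type~II point of radius $d$. The annulus $d^2<|z|<d$ is mapped to itself with $|H(z)|=d^3/|z|$, so this point is fixed by $H^2$. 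Its two steps have reductions $u\mapsto\pi(u)/u$ up to a unit factor, followed by inversion. The simple root $\bar u_0$ should be a simple hole of this reduction, because the residue class of $\bar u_0$ is sent by $H$ into the class of $\DD(0,d^2)$ containing the pole. Lemma~\ref{nm:lem:hole} then gives a repelling fixed point of $H^2$, provided the reduced $H^2$ does not fix $\bar u_0$. This contradicts the hypothesis, so all zeros satisfy $|\zeta-\eta|<d$ and $\pi=(u+a)^3$.

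\emph{Step 3: the strict inequalities.} Since $\pi(u)=u^3+a^3$, its $u^2$ and $u$ coefficients reduce to zero. These coefficients are $\overline{B\beta^2/D}$ and $\overline{C\beta/D}$, so $|B|<d$ and $|C|<d^2$. Then $|C|^{3/2}<d^3=|D|$.

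\emph{Main obstacle.} I expect Step~2 to be the hardest. I must verify that the reduction of $H^2$ at radius $d$ has degree at least two with $\bar u_0$ a genuinely simple hole; its common factor should come from the factor $\pi$ in the first step. I must also check that the reduced $H^2$ sends $\bar u_0$ to a class different from $\bar u_0$. If that value coincides with $\bar u_0$, I would fall back on an explicit chain of disk maps. That chain would run from $\DD(\zeta,d)$ onto $\DD(0,d^2)$, use Lemma~\ref{tk:lem:poledisk} near the pole, and insert one unramified squaring step through $|z|=d^{1/2}$, where $|2|=1$. I would then compute the multiplier directly, using Lemma~\ref{tk:lem:refine} to keep the compositions surjective.
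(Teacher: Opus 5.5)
The gap is in Step~1. You never exclude that $\zeta_1$ and $\zeta_2$ lie in the same residue class of $|z|=c$, and your reason for dismissing this is wrong. Put $\bar a=\overline{A\zeta_1^2/C}\neq0$ and $\bar b=\overline{B\zeta_1/C}$. The reduction of $H(\zeta_1u)/C$ is $\bar a\bar u^2+\bar b\bar u+1$, and $H(\zeta_1)=0$ gives $\bar a+\bar b+1=0$. The second root is $1/\bar a$, so $\bar u=1$ is a double root exactly when $\bar a=1$ (equivalently $\bar b=1$). The derivative at $\bar u=1$ is $2\bar a+\bar b=\bar a-1$, not $2\bar a$. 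Indeed, under your own normalization, leading coefficient one and root one force $\bar u^2+\bar u+1=(\bar u-1)^2$ in characteristic three.

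For $p=3$ only $|B|\leq c$ is available, so nothing forbids this. For example, $P=A(z-\gamma_0)(z-\zeta)^2$ with $|\gamma_0|=\lambda$ and $|\zeta|=c$ has $|B|=c$, $|C|=c^2$, $|D|=c^2\lambda$. In this case $|P'(\zeta_1)|=c|\zeta_1-\zeta_2|<c^2$, the map $\DD(\zeta_1,c)\to\DD(0,c^2)$ has degree two, and your multiplier $c/\lambda$ is lost.

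The missing idea, used in the paper, is to use the two-step return only to get $|H'(x)|\leq\lambda$ at $x=H(h)$. This gives $|g'(x)|\leq\lambda$ for $g=(z-\zeta_1)(z-\zeta_2)$. Together with $|g(x)|=c\lambda$, it forces $|x-\zeta_1|=|x-\zeta_2|=|\zeta_1-\zeta_2|=(c\lambda)^{1/2}<c$, which also disposes of your distinct-class case. A three-step chain $\DD(x_1,c^{3/2}\lambda)\to\DD(x_0,c^{1/2})\to\DD(\zeta_1,c)\to\DD(0,c^2)$ then gives a fixed point of $H^3$ with multiplier of norm $\lambda^{-1/2}>1$. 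Knowing $|\zeta_1-\zeta_2|$ exactly is what makes $|H'(v)|=(c\lambda)^{1/2}$ computable at $v=H^2(h)$.

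Step~2 also fails as stated, though your fallback is exactly the paper's argument. Use the coordinates $z=\beta u$ at $\zeta_{0,d}$ and $z=(D/\beta)w$ at $\zeta_{0,d^2}$. The two steps reduce to $[\pi(X,Y):XY^2]$, which has no holes, and to $[Y^3:XY^2]$, which has a double hole at $w=\infty$. Their composite is $[X^3Y^6:\pi X^2Y^4]$. So $H^2$ reduces to $u\mapsto u/\pi(u)$, with holes only at $u=0$ and $u=\infty$, of multiplicities two and four.

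A simple root $\bar u_0$ of $\pi$ is therefore not a hole. Its class maps with degree one onto $\DD(0,d^2)$, which $H$ maps with degree one onto $\{|z|>d\}\cup\{\infty\}$, so $\widetilde{H^2}(\bar u_0)=\infty$. Since no hole is simple, Lemma~\ref{nm:lem:hole} gives nothing.

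The explicit chain works and should be your main argument. The zero $\zeta$ with $\overline{\zeta/\beta}=\bar u_0$ is at distance $d$ from the other two zeros, so $|P'(\zeta)|=d^2$, and $H:\DD(\zeta,d)\to\DD(0,d^2)$ is bijective with $|H'|=d$. Pull back bijectively through $|z|=d^{1/2}$, with derivative norm $d^{1/2}$ since $|2|=1$. Then pull back through the pole disk $\DD(x_1,d^{5/2})$, with derivative norm $d^{-2}$. This gives a fixed point of $H^3$ with multiplier of norm $d^{-1/2}>1$.

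Step~3 is correct. Reading $|B|<d$ and $|C|<d^2$ off $\pi=(u+a)^3$ is a tidy substitute for the paper's computation with $|B+3A\zeta|<d$ and $|C-3A\zeta^2|<d^2$.
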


\begin{proof}
To exclude Cases~1 and~3, suppose that
$|C|^2\leq|D|<|C|^{3/2}$. Put
$c=|C|^{1/2}$ and $\lambda=|D|/|C|$. Then
$c^2\leq\lambda<c$ and $|B|\leq c$.
The Newton polygon gives one zero $\gamma_0$ of $P$ with
absolute value $\lambda$ and two zeros $\zeta_1,\zeta_2$
with absolute value $c$.
Write
\[
 g(z)=(z-\zeta_1)(z-\zeta_2),\qquad
 H(z)=\frac{A(z-\gamma_0)}{zQ(z)}g(z).
\]
On $|z|=c$, the factor $A(z-\gamma_0)/(zQ(z))$
has norm one, and its derivative has norm at most $\lambda/c^2$.

For $|y|<c^2$, write $z=\zeta_1u$.
The solutions of $H(z)=y$ with $|z|=c$ reduce to solutions of
\[
 \overline{A\zeta_1^2/C}\,\bar u^2
 +\overline{B\zeta_1/C}\,\bar u+1=0.
\]
If $\DD(\zeta_1,c)$ and $\DD(\zeta_2,c)$ are disjoint,
$H$ maps each disk onto $\DD(0,c^2)$ with mapping degree one.
If the disks coincide, $H$ maps their common disk onto
$\DD(0,c^2)$ with mapping degree two.
These alternatives correspond to two simple roots or one
double root in the reduced quadratic equation.
Choose $\DD(\zeta_1,c)$ and a solution $\xi$ of
$H(\xi)=\zeta_1$ with $|\xi|=c\lambda$.
Lemma~\ref{tk:lem:poledisk} gives
\[
 \DD(\xi,c\lambda)
 \xrightarrow{H}\DD(\zeta_1,c)
 \xrightarrow{H}\DD(0,c^2).
\]
Since
$\sup_{z\in\DD(\xi,c\lambda)}|z|=c\lambda<c^2$,
Lemma~\ref{tk:lem:count} gives
$h\in\DD(\xi,c\lambda)$ with $H^2(h)=h$.
For $x=H(h)$, we have
\[
 |h|=c\lambda,\qquad |x|=c,\qquad |H'(h)|=1/\lambda.
\]
Thus $|H'(x)|\leq\lambda$. Since $|g(x)|=c\lambda$,
the derivative of $A(z-\gamma_0)/(zQ(z))$ at $x$,
multiplied by $g(x)$, has norm at most
$\lambda^2/c<\lambda$. Hence $|g'(x)|\leq\lambda$.

If $|x-\zeta_1|\ne|x-\zeta_2|$, then
$g'(x)=(x-\zeta_1)+(x-\zeta_2)$ would have norm
greater than $\lambda$.
Hence the distances from $x$ to both roots equal $(c\lambda)^{1/2}$.
Since $|2|=1$, the identity
$g'(x)=2(x-\zeta_1)+(\zeta_1-\zeta_2)$ gives
\[
 |\zeta_1-\zeta_2|=(c\lambda)^{1/2}<c.
\]
Thus $\DD(\zeta_1,c)=\DD(\zeta_2,c)$.
Choose a disk whose points have absolute value $c^{1/2}$
and which $H$ maps bijectively onto $\DD(\zeta_1,c)$.
Then choose a disk near zero that $H$ maps bijectively
onto that disk. This gives
\[
 \DD(x_1,c^{3/2}\lambda)\xrightarrow{H}\DD(x_0,c^{1/2})
 \xrightarrow{H}\DD(\zeta_1,c)\xrightarrow{H}\DD(0,c^2).
\]
The first two maps are bijective and the last has degree two.
Since $|x_1|=c^{3/2}\lambda<c^2$,
Lemma~\ref{tk:lem:count} gives
$h\in\DD(x_1,c^{3/2}\lambda)$ with $H^3(h)=h$.
At $v=H^2(h)$,
\[
 |g(v)|=c^{3/2}\lambda<|\zeta_1-\zeta_2|^2.
\]
One of $|v-\zeta_1|$ and $|v-\zeta_2|$ is therefore
less than $|\zeta_1-\zeta_2|$, while the other equals it.
Hence $|g'(v)|=(c\lambda)^{1/2}$.
The derivative of $A(z-\gamma_0)/(zQ(z))$ at $v$,
multiplied by $g(v)$, has norm at most
$c^{-1/2}\lambda^2<(c\lambda)^{1/2}$.
Thus $|H'(v)|=(c\lambda)^{1/2}$.
The multiplier of $H^3$ at $h$ has norm
\[
 |(H^3)'(h)|=(c\lambda)^{-1}c^{1/2}(c\lambda)^{1/2}
 =\lambda^{-1/2}>1,
\]
a contradiction.

In Case~2, $|B|\leq d$ and $|C|\leq d^2$, so the
Newton polygon gives three zeros of $P$ with absolute value $d$.
If two of these zeros are at distance $d$,
one zero $\zeta$ is at distance $d$ from each of the
other two. Then $|P'(\zeta)|=d^2$.
The higher Taylor coefficients of $H$ at $\zeta$ are
bounded by $d^{2-k}$ in degree $k\geq2$, so its linear
term dominates on $\DD(\zeta,d)$. This disk maps
bijectively onto $\DD(0,d^2)$ with derivative norm $d$.
Choose a disk whose points have absolute value $d^{1/2}$
and which $H$ maps bijectively onto $\DD(\zeta,d)$.
Then choose a disk near zero that maps bijectively onto that
disk. This gives
\[
 \DD(x_1,d^{5/2})\xrightarrow{H}\DD(x_0,d^{1/2})
 \xrightarrow{H}\DD(\zeta,d)\xrightarrow{H}\DD(0,d^2).
\]
Every point in its first disk has absolute value $d^{5/2}<d^2$.
Lemma~\ref{tk:lem:count} gives a fixed point $h$ of $H^3$
in that disk, with multiplier norm
$d^{-2}d^{1/2}d=d^{-1/2}>1$,
again a contradiction. Thus the distance between any two
zeros of $P$ is less than $d$.
For any zero $\zeta$, the other two zeros of $P(\zeta+y)$
have absolute value less than $d$.
Their sum and product give
\[
 |B+3A\zeta|<d,\qquad |P'(\zeta)|<d^2.
\]
Since
\[
 P'(\zeta)-2\zeta(B+3A\zeta)=C-3A\zeta^2,
\]
we obtain $|C-3A\zeta^2|<d^2$.
Since $|3|<1$, we obtain $|B|<d$ and $|C|<d^2$.
\end{proof}

\subsection{Further coefficient restrictions}
\label{p3:sec:core}

To control $H'$ on $|z|=d$, we need bounds on the coefficients
in the expansion of $P$ at a zero.
We obtain these bounds by constructing periodic points
and using the assumption that none is repelling.

Choose a zero $\zeta$ of $P$, and let $R$ be the largest
distance between two zeros of $P$.
Proposition~\ref{p3:prop:generic} gives $R<d$. Expanding $P$ at $\zeta$ gives
\begin{equation}
\label{p3:eq:root}
 P(\zeta+y)=Ay^3+(3A\zeta+B)y^2+P'(\zeta)y.
\end{equation}
The factorization into linear factors gives
\[
 R=\max\{|3A\zeta+B|,|P'(\zeta)|^{1/2}\}.
\]
Consequently, the cubic term dominates whenever $|y|>R$.
If $R=0$, then $P(z)=A(z-\zeta)^3$.

The next lemma supplies the disk maps used to construct
these periodic points.
Starting with a disk centered at a point of absolute value $d$,
we repeatedly choose preimage disks whose points satisfy
$d<|z|<1$. We then choose a disk near zero that maps
bijectively onto the last chosen disk.
The lemma gives the radius of the first disk and the
derivative norm of the resulting iterate.

\begin{lemma}
\label{p3:lem:inversechain}
For $d<|z|<1$, we have
\begin{equation}
\label{p3:eq:outer}
 |H(z)|=|z|^2,\qquad |H'(z)|=|z|.
\end{equation}
For $0<|z|<d$, we have
\begin{equation}
\label{p3:eq:pole}
 |H(z)|=d^3/|z|,\qquad |H'(z)|=d^3/|z|^2.
\end{equation}
For $k\geq1$, put $r=d^{1/2^k}$. If $|v|=d$ and
$0<T\leq d$, there is a bijective map
\[
 H^{k+1}:\DD(x_1,d^2T/r)\longrightarrow\DD(v,T),
 \qquad |x_1|=d^3/r,
\]
with constant derivative norm
\[
 |(H^{k+1})'(z)|=r/d^2
 \qquad\text{for all }z\in\DD(x_1,d^2T/r).
\]
None of the disks used in this composition contains a pole of $H$.
\end{lemma}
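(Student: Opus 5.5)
First I will establish the two pointwise formulas, using Proposition~\ref{p3:prop:generic}: $|B|<d$, $|C|<d^2$, $|D|=d^3$, and $|Q(z)|=1$ on the open unit disk.

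For $d<|z|<1$ the numerator satisfies $|Az^3|=|z|^3>\max\{|B||z|^2,|C||z|,|D|\}$. Hence $|P(z)|=|z|^3$ and $|H(z)|=|z|^2$. For the derivative I will use \eqref{tk:eq:derivative}. There the term $Az(2+Ez)$ has norm exactly $|z|$, because $|2|=1$ when $p=3$ and $|Ez|<1$. The remaining terms have norms at most $|B|<d<|z|$, $|C|<d^2$, and $|D|/|z|^2<d<|z|$. So $|H'(z)|=|z|$.

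For $0<|z|<d$ the pole term $D/z$ has norm $d^3/|z|>d^2$. In the decomposition used in the proof of Lemma~\ref{tk:lem:poledisk}, the other term has norm at most $\max\{|z|^2,|B||z|,|C|,|D|\}<d^2$, which gives $|H(z)|=d^3/|z|$. For the derivative, the term $-Dz^{-2}$ in \eqref{tk:eq:derivative} has norm $d^3/|z|^2>d$. All other terms are bounded by $\max\{|z|,|B|,|C|,|D|/|z|\}$, and this is less than $d^3/|z|^2$ because $|D|/|z|=d^3/|z|<d^3/|z|^2$ and $|z|<d$. This proves \eqref{p3:eq:pole}.

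For the chain I will pull back step by step, working from the target disk $\DD(v,T)$ outward.

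The first step needs the following disk map. For a point $w$ with $d<|w|<1$, I claim that $H$ maps $\DD(w,|w|)$ bijectively onto $\DD(H(w),|w|^2)$, with constant derivative norm $|w|$. To see this, expand $H(w+t)-H(w)=\sum a_jt^j$. The linear coefficient has norm $|w|$. The higher coefficients I will bound by $|a_j|\le |w|^{2-j}$: the cubic part of $P$ gives coefficients bounded this way, and the lower-order contributions carry an extra factor less than one. So on $|t|<|w|$ the linear term dominates. Equivalently, in the coordinate $z=wu$, the reduction of $H(wu)/(Aw^2)$ is $\bar u^2$, and $\bar u\mapsto\bar u^2$ is injective near $\bar u=1$ since $2$ is a unit. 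This is the characteristic-three difference from the two-adic chain, where this map had degree two.

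With this in hand, set $r_j=d^{1/2^j}$. Starting from $|v|=d=r_0$, solve $H(w_1)=v$ with $|w_1|=r_1$; such a root exists by the Newton polygon of $P(z)-vzQ(z)$, whose coefficients $1,\;B-vE,\;C-v,\;D$ have norms $1$, ${<}d$, $d$, $d^3$. I expect the main care point to be here: I must check that the Newton polygon has a segment of slope giving roots of absolute value $d^{1/2}$, namely the segment joining the coefficients of $z$ and $z^3$. Then $\DD(w_1,r_1)$ maps onto $\DD(v,d)$. Intersecting down to $\DD(v,T)$ gives, by bijectivity, a disk of radius $T/r_1$.

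Iterating, choose $w_{j+1}$ with $H(w_{j+1})=w_j$ and $|w_{j+1}|=r_{j+1}$. The same Newton polygon argument, with target norm $r_j=r_{j+1}^2$, applies; the root's absolute value is now forced directly by $|H(z)|=|z|^2$ on the outer region. After $k$ steps I reach a disk centered at $w_k$, with $|w_k|=r$, of radius $T/(r_1r_2\cdots r_k)=T\,d^{-(1-2^{-k})}=Td/r$. The derivative norm of this composition is $r_1\cdots r_k=r/d$. I still need to confirm that these radii stay below $r_j$, so that each disk lies within the bijective domain $\DD(w_j,r_j)$; this holds because $T\le d$.

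Finally, choose $x_1$ with $H(x_1)=w_k$ and $|x_1|=d^3/r<d$. Lemma~\ref{tk:lem:poledisk} applies, since $|D|=d^3>|x_1|^2$, and gives a bijection from $\DD(x_1,|x_1|)$ onto $\DD(w_k,r)$ with derivative norm $d^3/|x_1|^2=r^2/d^3$. Restricting this to the preimage of the disk of radius $Td/r$ gives a source radius of $(Td/r)\cdot d^3/r^2$. I still need to check that this equals $d^2T/r$; a first pass does not match, so the exact source radius must be recomputed at this step.

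The total derivative norm is $(r^2/d^3)(r/d)$, which must reconcile with the stated $r/d^2$. I flag this multiplication as the step to recheck. The likely remedy is that the distinguished point has $|x_1|=d^3/r$, so the pole step contributes $r^2/d^3$, while the outer chain contributes $r_1\cdots r_k$ with only the correct indices. I will recompute the exponents carefully and adjust the indexing until the outer factors telescope to the claimed $r/d^2$ and radius $d^2T/r$.

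For the last assertion, no pole of $H$ occurs in these disks. The only pole in the open unit disk is $0$, and every disk in the composition consists of points of constant absolute value $r_j$ or $d^3/r$. These are all positive, since each disk is strictly smaller than the absolute value of its center.
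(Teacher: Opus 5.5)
Your approach is the same as the paper's. You make $k$ bijective pullbacks through disks whose points have absolute values $d^{1/2},d^{1/4},\dots,d^{1/2^k}$. Each step has derivative norm equal to that absolute value, and is bijective because $\pm1$ are simple roots of $\bar u^2=1$ in characteristic three. One pole step via Lemma~\ref{tk:lem:poledisk} follows. Your pointwise estimates, your $\DD(w,|w|)\to\DD(H(w),|w|^2)$ claim, the containment check using $T\le d$, and the no-pole remark are all fine.

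As written, however, the proof does not close. The mismatch you flag is not an indexing problem but an inverted exponent:
\[
 r_1r_2\cdots r_k=d^{1/2+1/4+\cdots+1/2^k}=d^{1-2^{-k}}=\frac dr,
 \qquad
 T\,d^{-(1-2^{-k})}=\frac{rT}{d},
\]
not $r/d$ and $Td/r$. With the correct values, after the $k$ outer steps you have a disk about $w_k$ of radius $rT/d$, which is at most $r$ because $T\le d$, and the derivative norm so far is $d/r$. The pole step scales by $d^3/r^2$. It therefore gives the source radius $(rT/d)\cdot d^3/r^2=d^2T/r$ and the total derivative norm $(r^2/d^3)(d/r)=r/d^2$, exactly as stated. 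No re-indexing is needed. Replace your last two paragraphs with this computation.

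There are two minor points. First, the $z^2$ coefficient only satisfies $|B-vE|\le d$, since $|vE|$ can equal $d$; this does not affect the Newton polygon. Second, you should justify the existence of $x_1$ with $|x_1|=d^3/r$. Use the segment joining the constant and linear coefficients of $(A-w_kF)z^3+(B-w_kE)z^2+(C-w_k)z+D$, whose norms are $d^3$ and $r$.
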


\begin{proof}
For $d<|z|<1$, the term $Az^2$ determines $|H(z)|$.
For $0<|z|<d$, the term $D/z$ determines $|H(z)|$.
Differentiation gives the stated derivative norms;
the first formula uses $|2|=1$.

For $d<s<1$, let $|a|=s^2$ and $0<\tau\leq s^2$.
Choose a solution $b$ of $H(b)=a$ with $|b|=s$.
Writing $z=bu$, the reduction of $H(bu)/a$ is $\bar u^2$,
so the reduced equation for $H(z)=a$ is
\[
 \bar u^2=1.
\]
Its roots $1$ and $-1$ are simple in characteristic three.
Thus
\[
 H:\DD(b,\tau/s)\longrightarrow\DD(a,\tau)
\]
is bijective, with $|H'(z)|=s$ for every
$z\in\DD(b,\tau/s)$.
The other solution of $H(z)=a$ with absolute value $s$
gives a second such disk.

For a target $\DD(a,\tau)$ with $|a|=s>d$ and $0<\tau\leq s$,
choose the solution $b$ of $H(b)=a$ with $|b|=d^3/s$.
Lemma~\ref{tk:lem:poledisk} and restriction to the smaller
target give a bijective map
\[
 H:\DD(b,\tau d^3/s^2)\longrightarrow\DD(a,\tau),
\]
with $|H'(z)|=s^2/d^3$ for every
$z\in\DD(b,\tau d^3/s^2)$.
These assertions follow by clearing denominators in
$H(z)=y$, which gives
\[
 (A-yF)z^3+(B-yE)z^2+(C-y)z+D=0,
\]
and examining the linear term in the expansion of $H$
on each chosen disk.

The points in the successive disks of these $k$ steps have
absolute values $d^{1/2},d^{1/4},\ldots,r$, whose product is $d/r$.
These steps pull back the target to a disk of radius $rT/d$.
Finally, the disk near zero mapping bijectively onto the
last chosen disk has radius $d^2T/r$.
The product of the derivative norms is then
$(r^2/d^3)(d/r)=r/d^2$.
\end{proof}

\begin{lemma}
\label{p3:lem:coefficientbounds}
For the chosen zero $\zeta$, we have
\begin{equation}
\label{p3:eq:criterion}
 d\geq\frac1{27},\qquad
 |3A\zeta+B|\leq d^{5/3},\qquad |P'(\zeta)|\leq d^3.
\end{equation}
\end{lemma}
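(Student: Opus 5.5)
The plan is to argue by contradiction in every case, using the return maps of Lemma~\ref{p3:lem:inversechain}. We close each chain with a disk on which $H$ maps onto $\DD(0,T')$, apply Lemma~\ref{tk:lem:count}, and check the multiplier. The basic local map comes from \eqref{p3:eq:root}. On a disk $\DD(\zeta,T)$ with $T\leq d$ we have $|zQ(z)|=d$, so $H(\zeta+y)$ equals $P(\zeta+y)/(zQ(z))$ up to a unit factor. Hence $H$ maps $\DD(\zeta,T)$ onto $\DD(0,\rho(T)/d)$, where
\[
 \rho(T)=\max\{T^3,|3A\zeta+B|T^2,|P'(\zeta)|T\}.
\]
On the subdisk where the linear term $P'(\zeta)y$ dominates, the map is bijective with derivative norm $|P'(\zeta)|/d$.

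We precompose with the chain of Lemma~\ref{p3:lem:inversechain}, taking $v=\zeta$. This gives
\[
 \DD(x_1,d^2T/r)\xrightarrow{H^{k+1}}\DD(\zeta,T)\xrightarrow{H}\DD(0,\rho(T)/d).
\]
Lemma~\ref{tk:lem:count} applies as soon as $d^3/r<\rho(T)/d$. The first map has derivative norm $r/d^2$ throughout, and the last factor is read off at the image point $v=H^{k+1}(h)$. By the same Newton-polygon reasoning as in Lemma~\ref{lc:lem:terminalderivative}, and using $|H(v)|=|h|=d^3/r$, the norm $|H'(v)|$ is determined by $|P(v)|=d^4/r$ together with the distances from $v$ to the roots. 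The multiplier is therefore about $(r/d^2)\,|P'(v)|/d$, and we need this to be $\leq1$.

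We first take $k$ large and $T=d$, so that $\rho(d)=d^3$ and $r\to1$. The condition $d^3/r<d^2$ holds, since $r>d$. At $v$ the cubic term dominates whenever $|P(v)|^{1/3}>R$, that is $d^{4/3}r^{-1/3}>R$. In that situation $|P'(v)|=|3|\,|v-\zeta|^2$, up to the $|3|$ subtlety: the expansion of $P'$ at $\zeta$ is $3Ay^2+2(3A\zeta+B)y+P'(\zeta)$, so with $|3|=1/3$ one must compare $\frac13|y|^2$ with the lower-order terms. Taking $k=1$, so $r=d^{1/2}$, this gives a multiplier of roughly $d^{-5/2}\cdot\frac13 d^{8/3}\cdot d^{-1/3}/d$. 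Optimizing over $k$ should produce the inequality $d\geq1/27$: the factor $\frac13$ coming from $|3|$ must compensate for a negative power of $d$, and presumably that power is $d^{-1/3}$ cubed. Next, if $|3A\zeta+B|>d^{5/3}$, or if $|P'(\zeta)|>d^3$, we choose $T$ at the Newton-polygon breakpoint, where $T=|3A\zeta+B|$ or $T=|P'(\zeta)|^{1/2}$, and pick $v$ near the appropriate root. Then the quadratic term, or the linear term, controls $|H'(v)|$, and each multiplier exceeds $1$, which contradicts the hypothesis.

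The main obstacle is the exponent bookkeeping. We need to choose $k$ and $T$ so that three things hold at once: the inclusion $\sup|z|<R_{\mathrm{target}}$ required by Lemma~\ref{tk:lem:count}, exact control of $|H'(v)|$ in place of a mere bound (which requires a single dominant term among $3Ay^2$, $2(3A\zeta+B)y$ and $P'(\zeta)$, all weighted by $|3|$), and a multiplier strictly greater than one. I would first settle the bound $d\geq1/27$, handling it exactly as in Lemma~\ref{lc:lem:coefficientbounds} by splitting into $R$ small and $R$ large and then treating each regime with one chain. Once $d$ is bounded below, the two coefficient bounds in \eqref{p3:eq:criterion} follow, since the thresholds $d^{5/3}$ and $d^3$ are precisely the values that make the multipliers equal to one.
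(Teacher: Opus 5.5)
There is a genuine gap in the second half of your plan: it treats the return point $v=H^{k+1}(h)$ as something you can place, and it is not. In the chain of Lemma~\ref{p3:lem:inversechain} the fixed point has $|h|=d^3/r$, so $|P(v)|=d|h|=d^4/r>d^4$ regardless of $T$. Once $R\le d^{4/3}$ is known, this forces $|v-\zeta|=d^{4/3}r^{-1/3}>R$. Your breakpoint choice $T=|3A\zeta+B|$ or $T=|P'(\zeta)|^{1/2}$ satisfies $T\le R$. Hence $\rho(T)\le R^3\le d^4<d^4/r$, so the first disk, whose points have absolute value $d^3/r$, does not lie in $\DD(0,\rho(T)/d)$, and Lemma~\ref{tk:lem:count} does not apply. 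You cannot pick $v$ near a root at the breakpoint scale.

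The breakpoint idea is right only in the opposite regime. This is how the paper first proves $R\le d^{4/3}$:
\begin{itemize}
\item a root $v$ at distance $R$ from the other two gives a bijection $\DD(v,R)\to\DD(0,R^3/d)$ with derivative norm $R^2/d$;
\item take $T=R$ and $k$ so large that $r>d^4/R^3$;
\item the multiplier is then $rR^2/d^3>d/R>1$.
\end{itemize}
You never establish this bound. Your $d\ge1/27$ step needs it to know that the cubic term governs $|v-\zeta|$.

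The missing idea is that all three bounds come from the single chain you already use for $d\ge1/27$. At the forced location, the four terms of $H'(v)$ in \eqref{p3:eq:terminalder}, multiplied by $r/d^2$, have norms
\[
 \frac{r^{1/3}}{3d^{1/3}},\qquad
 \frac{|3A\zeta+B|\,r^{2/3}}{d^{5/3}},\qquad
 \frac{|P'(\zeta)|\,r}{d^3},\qquad 1.
\]
The last one comes from $-H(v)\bigl(1/v+Q'(v)/Q(v)\bigr)$, which you dropped. Let $k\to\infty$, so $r\to1^-$. If any of the first three limits exceeds one, then for large $k$ there is a unique largest term, so it equals the multiplier and gives a repelling point. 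Ties among equal limits are broken by the distinct exponents $1/3,2/3,1$ of $r$. This yields all three inequalities at once. The thresholds $d^{5/3}$ and $d^3$ come from this forced distance $d^{4/3}r^{-1/3}$, not from breakpoint radii.

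Your sequencing, proving $d\ge1/27$ first and deducing the rest, also fails as stated. You cannot read the multiplier off the cubic term without ruling out a tie with the other two terms, which is exactly what the simultaneous argument handles. Finally, your arithmetic is off: at $k=1$, $r/d^2=d^{-3/2}$ and the cubic contribution is $\tfrac13d^{-1/6}$. The constant $1/27$ comes from the limit condition $\tfrac13d^{-1/3}\le1$.
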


\begin{proof}
First we show that $R\leq d^{4/3}$. If $R>d^{4/3}$,
one zero $v$ is at distance $R$ from both other zeros.
Thus $|P'(v)|=R^2$.
The coefficient of $y^2$ in $P(v+y)$ satisfies
$|3Av+B|\leq R$, so the linear term dominates on
$\DD(v,R)$. Hence
\[
 H:\DD(v,R)\longrightarrow\DD(0,R^3/d)
\]
is bijective, with derivative norm $R^2/d$.

Choose $k$ so large that $r=d^{1/2^k}>d^4/R^3$.
Lemma~\ref{p3:lem:inversechain}, with $T=R$, gives a
return whose source disk consists entirely of points of
absolute value $d^3/r$, which is strictly smaller than $R^3/d$.
Lemma~\ref{tk:lem:count} gives a fixed point $h$ of $H^{k+2}$.
The multiplier of $H^{k+2}$ at $h$ has norm
\[
 |(H^{k+2})'(h)|=(r/d^2)(R^2/d)=rR^2/d^3>d/R>1,
\]
a contradiction. Thus $R\leq d^{4/3}$.

To prove \eqref{p3:eq:criterion}, take $k\geq1$, put
$r=d^{1/2^k}$, and choose a radius
\[
 d^{4/3}r^{-1/3}<T<d.
\]
Since $T>R$, the cubic term gives a degree-three map
$\DD(\zeta,T)\to\DD(0,T^3/d)$.
Lemma~\ref{p3:lem:inversechain} gives the composition
\[
 \DD(x_1,d^2T/r)
 \xrightarrow{H^{k+1}}\DD(\zeta,T)
 \xrightarrow{H}\DD(0,T^3/d),
 \qquad |x_1|=d^3/r<T^3/d.
\]
Lemma~\ref{tk:lem:count} gives
$h\in\DD(x_1,d^2T/r)$ with $H^{k+2}(h)=h$.
Put $z=H^{k+1}(h)$. Then
\[
 |z|=d,\qquad H(z)=h,\qquad |P(z)|=d^4/r>d^4.
\]
$|P(z)|$ would be at most $R^3\leq d^4$ if
$|z-\zeta|\leq R$. Therefore the cubic term dominates,
and
\[
 |z-\zeta|=d^{4/3}r^{-1/3}.
\]

Differentiating \eqref{p3:eq:root} gives
\begin{equation}
\label{p3:eq:terminalder}
 H'(z)=
 \frac{3A(z-\zeta)^2+2(3A\zeta+B)(z-\zeta)+P'(\zeta)}
      {zQ(z)}
 -H(z)\left(\frac1z+\frac{Q'(z)}{Q(z)}\right).
\end{equation}
On $|z|=d$, the expression $1/z+Q'(z)/Q(z)$ has norm $1/d$.
Consider separately the three terms in the numerator of
\eqref{p3:eq:terminalder} and the final term
$-H(z)(1/z+Q'(z)/Q(z))$.
Multiplying their norms in $H'(z)$ by the derivative norm
of $H^{k+1}$, namely $r/d^2$, gives
\begin{equation}
\label{p3:eq:fourterms}
 \frac{r^{1/3}}{3d^{1/3}},\qquad
 \frac{|3A\zeta+B|r^{2/3}}{d^{5/3}},\qquad
 \frac{|P'(\zeta)|r}{d^3},\qquad 1.
\end{equation}
If one of the four numbers in \eqref{p3:eq:fourterms} is
strictly larger than the others, the ultrametric inequality
shows that it equals $|(H^{k+2})'(h)|$.
As $k\to\infty$, $r=d^{1/2^k}$ tends to one from below.
Suppose that the limit of one of the first three numbers
exceeds one. For all sufficiently large $k$, numbers with
smaller limits are smaller than those with the largest limit.
If several limits are equal, their coefficients are equal,
and the smallest exponent among $1/3,2/3,1$ gives the largest
value for $0<r<1$.
Thus, for all sufficiently large $k$, there is a unique
largest number, and it is greater than one.
The corresponding fixed point $h$ of $H^{k+2}$ is then
repelling, a contradiction.
All three bounds in \eqref{p3:eq:criterion} follow.
\end{proof}

\subsection{Construction of \texorpdfstring{$\phi$}{phi} and the uniform derivative bound}

The coefficient bounds in Lemma~\ref{p3:lem:coefficientbounds}
give the derivative estimate on $|z|=d$ needed to construct
$\phi$.
\begin{lemma}
\label{p3:lem:circlederivative}
For $|z|=d$, we have
\begin{equation}
\label{p3:eq:terminalweight}
 |H'(z)|\leq d^{-1}\max\{|H(z)|,d^3\}.
\end{equation}
\end{lemma}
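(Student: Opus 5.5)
The plan is to estimate each term of the identity \eqref{p3:eq:terminalder} separately on the circle $|z|=d$. The inputs are the coefficient bounds of Lemma~\ref{p3:lem:coefficientbounds}, namely $d\geq1/27$, $|3A\zeta+B|\leq d^{5/3}$ and $|P'(\zeta)|\leq d^3$, together with the bound $R\leq d^{4/3}$ proved inside that lemma. For $|z|=d$ we have $|zQ(z)|=d$ and $|1/z+Q'(z)/Q(z)|\leq 1/d$, and $|3|=1/3$. Put $\ell=|z-\zeta|\leq d$. The four contributions to $|H'(z)|$ then have norms at most
\[
 \frac{\ell^2}{3d},\qquad d^{2/3}\ell,\qquad d^2,\qquad \frac{|H(z)|}{d}.
\]
The right side of \eqref{p3:eq:terminalweight} is $d^{-1}\max\{|H(z)|,d^3\}$. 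It dominates both $d^2$ and $|H(z)|/d$, so the third and fourth terms need no further work. By the ultrametric inequality it then suffices to bound the first two terms by $\max\{d^2,|H(z)|/d\}$.

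First suppose $\ell\leq R$. Then $\ell\leq d^{4/3}$, so the first term is at most $d^{8/3}/(3d)\leq d^2$ and the second is at most $d^{2/3}d^{4/3}=d^2$.

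Now suppose $\ell>R$. As noted after \eqref{p3:eq:root}, the cubic term then dominates, so $|P(z)|=\ell^3$ and $|H(z)|=\ell^3/d$. The target bound is therefore at least $\max\{d^2,\ell^3/d^2\}$.
\begin{itemize}
\item For the first term, if $\ell\geq d/3$ then $\ell^2/(3d)\leq \ell^3/d^2$. Otherwise $\ell^2/(3d)<d/27\leq d^2$, and this is exactly where $d\geq1/27$ is used.
\item For the second term, if $\ell\geq d^{4/3}$ then $d^{2/3}\ell\leq\ell^3/d^2$. Otherwise $d^{2/3}\ell<d^2$.
\end{itemize}
This covers all cases and gives \eqref{p3:eq:terminalweight}.

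The main obstacle is the first term, because the factor $|3|$ must compensate exactly for the smallness of $d$. The threshold $\ell=d/3$ works precisely because of the lower bound $d\geq1/27$ from Lemma~\ref{p3:lem:coefficientbounds}. I would check this comparison carefully, including the boundary cases where two terms have equal norm, since only upper bounds are needed there.
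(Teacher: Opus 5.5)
Your proof is correct and is essentially the paper's argument: you bound each of the four terms of \eqref{p3:eq:terminalder} on $|z|=d$ by $\max\{d^2,|H(z)|/d\}$ using Lemma~\ref{p3:lem:coefficientbounds}, and $d\geq1/27$ is needed only for the term $3A(z-\zeta)^2$. You also use it, without flagging it, in the case $\ell\leq R$, where $d^{5/3}/3\leq d^2$ requires $d^{1/3}\geq1/3$. The paper instead splits at $|z-\zeta|\leq d^{4/3}$ rather than at $R$ and phrases the comparison as $|P'(z)|\leq\max\{|P(z)|,d^4\}/d$, which is equivalent bookkeeping.
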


\begin{proof}
By \eqref{p3:eq:root} and
Lemma~\ref{p3:lem:coefficientbounds}, if
$|z-\zeta|\leq d^{4/3}$, then
\[
 |P(z)|\leq d^4,\qquad |P'(z)|\leq d^3,
\]
where the second inequality uses $d\geq1/27$.
If $|z-\zeta|>d^{4/3}$, the cubic term gives
$|P(z)|=|z-\zeta|^3$, and each term of $P'(z)$ has
norm at most $|P(z)|/d$. For the term $3A(z-\zeta)^2$,
use $|z-\zeta|>d^{4/3}\geq d/3$; the other two terms
are bounded by \eqref{p3:eq:criterion}.
Dividing by $zQ(z)$ and including the term
$-H(z)(1/z+Q'(z)/Q(z))$
in \eqref{p3:eq:terminalder} proves \eqref{p3:eq:terminalweight}.
\end{proof}

We are now ready to construct $\phi$.
The formulas in Lemma~\ref{p3:lem:inversechain} give the
required comparison on $0<|z|<d$ and $d<|z|<1$.
Lemma~\ref{p3:lem:circlederivative} gives the comparison
on $|z|=d$.

\begin{proof}[Proof of Theorem~\ref{p3:thm:core}]
Define
\[
 \phi(z)=\max\{d^3,\delta(z)\}.
\]
Then $d^3\leq\phi(z)\leq1$. We verify
\begin{equation}
\label{p3:eq:one-step}
 H^\#(z)\phi(z)\leq\phi(H(z)).
\end{equation}

On $\mathcal E$, forward invariance gives
$\phi(z)=\phi(H(z))=1$. Lemma~\ref{tk:lem:derivativeestimates}
gives $H^\#\leq1$ there.

If $|z|\leq d^3$, the formula near the pole gives
$H(z)\in\mathcal E$ and $H^\#(z)=d^{-3}$, so equality
holds. At zero, use $1/H(z)=zQ(z)/P(z)$.

If $d^3<|z|<d$, both $z$ and $H(z)$ lie in the open unit disk,
and \eqref{p3:eq:pole} gives
\[
 H^\#(z)\phi(z)=\frac{d^3}{|z|^2}|z|
 =|H(z)|=\phi(H(z)).
\]

On $|z|=d$, we have $|H(z)|\leq d^2<1$.
Lemma~\ref{p3:lem:circlederivative} gives
\[
 H^\#(z)\phi(z)=d|H'(z)|
 \leq\max\{|H(z)|,d^3\}=\phi(H(z)).
\]

Finally, if $d<|z|<1$, \eqref{p3:eq:outer} gives
\[
 H^\#(z)\phi(z)=|z|^2=\phi(H(z)).
\]
The chain rule and \eqref{p3:eq:one-step} give
\[
 (H^n)^\#(z)
 \leq\frac{\phi(H^n(z))}{\phi(z)}
 \leq d^{-3}=|D|^{-1}
 \qquad\text{for all }z\in\PP(\Cthree)
 \text{ and }n\geq0.
\]
Lemma~\ref{tk:lem:fatou} therefore gives
$\JI(H)=\varnothing$.
\end{proof}

\begin{proof}[Proof of Theorem~\ref{thm:main-global}]
Suppose that $f$ has no classical repelling periodic point.
For $p\geq5$, Proposition~\ref{prop:tame} gives
$\JI(f)=\varnothing$.
For $p=2$ or $p=3$, a map with potential good reduction
already has empty classical Julia set. Otherwise,
Theorem~\ref{nm:thm:normalization} gives a conjugate $H$
satisfying \eqref{tk:eq:H}--\eqref{tk:eq:norms}.
Theorem~\ref{sy:cor:normalform} for $p=2$ and
Theorem~\ref{p3:thm:core} for $p=3$ give
$\JI(H)=\varnothing$. Conjugation therefore gives
$\JI(f)=\varnothing$.

Thus a nonempty classical Julia set contains a classical
repelling periodic point. In this case, B\'ezivin's theorem
\cite[Theorem~3]{bezivin} gives
\[
 \JI(f)=\overline{\Rep(f)}.
\]
If $\JI(f)=\varnothing$, the same equality holds because
every classical repelling periodic point belongs to
$\JI(f)$.
\end{proof}
\section*{Disclosure of AI assistance}

We used OpenAI's ChatGPT to assist with language
editing, organization, checking calculations, and reviewing
the argument. We reviewed all AI-assisted revisions
and take full responsibility for the contents of the paper.

\end{document}